\newtheorem{theorem}{Theorem}[section]
\newtheorem{lemma}[theorem]{Lemma}
\newtheorem{corollary}[theorem]{Corollary}
\newtheorem{proposition}[theorem]{Proposition}
\newtheorem{condition}[theorem]{Condition}
\newtheorem{conjecture}[theorem]{Conjecture}
\newtheorem{example}[theorem]{Example}
\theoremstyle{definition}
\newtheorem{question}[theorem]{Question}
\newtheorem{remark}[theorem]{Remark}
\theoremstyle{remark}
\newcommand{\subsubsubsection}{\@startsection{paragraph}{4}{\z@}%
 {1.0\Cvs \@plus.5\Cdp \@minus.2\Cdp}%
 {.1\Cvs \@plus.3\Cdp}%
 {\reset@font\sffamily\normalsize}
 }
\DeclareMathOperator{\Spec}{Spec}
\DeclareMathOperator{\Hom}{Hom}
\DeclareMathOperator{\Ker}{Ker}
\DeclareMathOperator{\Tr}{Tr}
\DeclareMathOperator{\Ima}{Im}
\DeclareMathOperator{\la}{\lambda}
\begin{document}

\title[Maximality and minimality of van der Geer--van der Vlugt curves]{Criteria of maximality and minimality of van der Geer--van der Vlugt curves}

\author{Tetsushi Ito}
\address{
Department of Mathematics, Faculty of Science, Kyoto University
Kyoto, 606--8502, Japan}
\email{tetsushi@math.kyoto-u.ac.jp}

\author{Ren Tatematsu}
\address{
Department of Mathematics, Faculty of Science, Kyoto University
Kyoto, 606--8502, Japan}
\email{tatematsu.ren.68x@st.kyoto-u.ac.jp}

\author{Takahiro Tsushima}
\address{
Keio University School of Medicine,
4-1-1 Hiyoshi, Kohoku-ku,
Yokohama, 223-8521, Japan}
\email{tsushima@keio.jp}

\date{December 19, 2024}

\subjclass[2020]{Primary: 14F20, 14G15; Secondary: 11M38, 11G20}

\keywords{Artin--Schreier curves; van der Geer--van der Vlugt curves; maximal curves}

\begin{abstract}
The van der Geer--van der Vlugt curves are Artin--Schreier coverings of the affine line
defined by linearized polynomials over finite fields.
We give several criteria for them to be maximal or minimal, i.e.\ attaining the upper or lower bound in the Hasse--Weil inequalities.
We also study the $L$-polynomials of certain generalizations of van der Geer--van der Vlugt curves.
As applications, we find several maximal (or minimal) curves among them.
Our proof is based on an explicit formula of $L$-polynomials recently obtained by 
Takeuchi and the third author.
\end{abstract}

\maketitle

\section{Introduction}\label{Introduction}

Let $C$ be a geometrically connected smooth 
projective curve of genus $g(C)$
over the finite field $\mathbb{F}_q$
of cardinality $q$.
We have the Hasse--Weil inequalities:
\[
q + 1 - 2 g(C) q^{1/2} \, \leq \, |C(\mathbb{F}_q)| \, \leq \, q + 1 + 2 g(C) q^{1/2}.
\]
The curve $C$ is called \emph{maximal} over $\mathbb{F}_q$ (or \emph{$\mathbb{F}_q$-maximal})
if $|C(\mathbb{F}_q)|=q+1+2 g(C) q^{1/2}$ holds.
Similarly, it is called 
\emph{minimal} over $\mathbb{F}_q$ (or \emph{$\mathbb{F}_q$-maximal})
if $|C(\mathbb{F}_q)|=q+1-2 g(C) q^{1/2}$) holds. 
Finding explicit maximal (or minimal) curves is an interesting problem,
which has important applications to coding theory and number 
theory; see \cite{St} for backgrounds.

In this paper, we focus on a particular class of curves called the van der Geer--van der Vlugt curves,
which are Artin--Schreier coverings of the affine line.
We fix a prime number $p_0$.
Let $p$ be a power of $p_0$, and $q$ a power of $p$.
We write $q = p_0^{f_0}$.
Let
\[
  R(x)=\sum_{i=0}^e a_i x^{p^i} \in 
\mathbb{F}_q[x]
  \qquad (a_e \neq 0)
\] 
be an $\mathbb{F}_p$-linearized polynomial of degree $p^e$ over $\mathbb{F}_q$.
Let $C_R$ be the smooth affine curve over $\mathbb{F}_q$
defined by 
\[
y^p-y=xR(x).
\]
The smooth compactification $\overline{C}_R$ of $C_R$ is
a geometrically connected smooth projective curve over $\mathbb{F}_q$ with genus $p^e(p-1)/2$.
It is called the \emph{van der Geer--van der Vlugt curve} associated with $R(x)$. 
When $p=p_0$ (i.e.\ $p$ is a prime number),
van der Geer--van der Vlugt studied the automorphism group and the $L$-polynomial of $\overline{C}_R$ in \cite{GV};
see also \cite{BHMSSV}.
For general $p$, some examples of maximal (or minimal) van der Geer--van der Vlugt curves are found by several people;
see \cite{B}, \cite{CO}, \cite{C}, \cite{OS}.

When $p_0 \neq 2$, Takeuchi and the third author
obtained an explicit formula of the $L$-polynomials of Geer--van der Vlugt curves satisfying some conditions in terms of quadratic Gauss sums,
and prove for the maximality (or minimality) over
$\mathbb{F}_{q^{2 p_0}}$ (or $\mathbb{F}_{q^{4 p_0}}$);
see \cite[Theorem 1.2]{TT}.
However, the maximality (or minimality) over $\mathbb{F}_{q^{2 p_0}}$ (or $\mathbb{F}_{q^{4 p_0}}$)
does not imply 
the maximality (or minimality) over a smaller field;
see Lemma \ref{maximal minimal extension}.
The results in \cite{TT} do not say anything about
the maximality (or minimality) over $\mathbb{F}_q$ (or $\mathbb{F}_{q^2}$).

The aim of this paper is to give 
refinements of the results in \cite{TT} when $p_0 \neq 2$.
We give several criteria for the maximality and minimality over $\mathbb{F}_q$ (or $\mathbb{F}_{q^2}$) of van der Geer--van der Vlugt curves satisfying some conditions;
see Theorem \ref{cpq}, Theorem \ref{ttbb}, Theorem \ref{ttb3} and Theorem \ref{ttb4}.
We also study generalizations of van der Geer--van der Vlugt curves defined by the equation
\[
y^{p^r}-y=xR(x)
\]
for some $r \geq 1$.
As applications,
we find several examples of maximal and minimal
curves among them.
To the best of the authors' knowledge,
some of the curves found in this paper were not studied previously in the literature.

In order to illustrate our results,
we give some examples of curves to which our criteria can be applied.

In the following theorem, we establish the maximality and minimality of van der Geer--van der Vlugt curves
arising from linearized polynomials satisfying certain
symmetry conditions.

\begin{theorem}[see Theorem \ref{214}]
\label{214t}
Assume $p_0 \neq 2$, $q = p^n$, and $p \equiv 1 \pmod n$.
Let $e \geq 1$, and $c_0$, $c_1$, $\ldots$, $c_{e-1} \in \mathbb{F}_p$ be elements such that the polynomial 
 \[
        g(x) \coloneqq x^{2e} + c_{e - 1} x^{2 e - 1} + \cdots +  c_1 x^{e + 1} + c_0 x^e + c_1 x^{e - 1}
            + \cdots + c_{e - 1} x + 1 \in \mathbb{F}_p[x]
            \]
            divides $x^n-1$. 
            Let 
              \[
        R(x) \coloneqq 2 x^{p^{e}} + 2 c_{e - 1} x^{p^{e - 1}} + \cdots + 2 c_1 x^p + c_0 x
        \in \mathbb{F}_p[x]. 
    \]
Let $\overline{C}_R$ be the van der Geer--van der Vlugt curve over $\mathbb{F}_q$ associated with $R(x)$.
\begin{itemize}
\item[{\rm (1)}] 
The curve $\overline{C}_R$ is
$\mathbb{F}_{q^2}$-maximal if $q \equiv 3 \pmod 4$. 
It is
$\mathbb{F}_{q^2}$-minimal 
if $q \equiv 1 \pmod 4$.

\item[{\rm (2)}]
  Recall that $q = p_0^{f_0}$.
   \begin{itemize}
   \item[{\rm (i)}]
   The curve $\overline{C}_R$ is 
   $\mathbb{F}_q$-maximal if and only if
   $n$ is even, $p_0 \equiv 3 \pmod 4$ and  
    $f_0 \equiv 2 \pmod 4$.

  \item[{\rm (ii)}]
   The curve $\overline{C}_R$ is 
   $\mathbb{F}_q$-minimal if and only if one of the following conditions holds:
   \begin{itemize}
   \item[$\bullet$] $n$ is even, and $p_0 \equiv 1 \pmod 4$, 
   \item[$\bullet$] $n$ is even, $p_0 \equiv 3 \pmod 4$, and  
    $f_0 \equiv 0 \pmod 4$.
   \end{itemize}
   \end{itemize}

\item[{\rm (3)}]
   If $n$ is odd, the curve 
   $\overline{C}_R$ is neither  
   $\mathbb{F}_q$-maximal nor $\mathbb{F}_q$-minimal.
\end{itemize}
\end{theorem}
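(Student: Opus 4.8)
The plan is to reduce all three parts to the maximality and minimality criteria of this paper, Theorem \ref{cpq}, Theorem \ref{ttbb}, Theorem \ref{ttb3} and Theorem \ref{ttb4} (which in turn rest on the explicit $L$-polynomial formula of \cite{TT}); concretely, one checks that $R(x)$ satisfies the required hypotheses and then evaluates the invariant governing maximality and minimality. The first step is to recognize the combinatorial input. Write $\Phi$ for the $p$-power operator $x\mapsto x^{p}$, and recall that the $\mathbb{F}_{p}$-linearized polynomial attached to $R(x)=\sum_{i=0}^{e}a_{i}x^{p^{i}}$ is $E_{R}(x)=\sum_{i=0}^{e}\bigl(a_{i}x^{p^{e-i}}+a_{i}x^{p^{e+i}}\bigr)$ (since $c_{0},\dots,c_{e-1}\in\mathbb{F}_{p}$, the Frobenius twists on the coefficients are trivial). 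Substituting $a_{e}=2$, $a_{e-1}=2c_{e-1},\dots,a_{1}=2c_{1}$, $a_{0}=c_{0}$ and simplifying yields $E_{R}=2\,g(\Phi)$, so that $\ker E_{R}=\{x\in\overline{\mathbb{F}}_{p}: g(\Phi)(x)=0\}$; this identity is exactly the reason the palindromic shape of $g$ and the coefficients $2$ were built into $R$.

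Next I would verify the hypotheses of the criteria. Since $p\equiv 1\pmod n$ we have $p\nmid n$ and $n\mid p-1$, so $x^{n}-1$ is separable and splits into linear factors over $\mathbb{F}_{p}$ with roots $\mu_{n}\subseteq\mathbb{F}_{p}^{\times}$; hence $g$ is separable of degree $2e$ and splits over $\mathbb{F}_{p}$, so $E_{R}$ is a separable $\mathbb{F}_{p}$-linearized polynomial of $p$-degree $2e$ and $V:=\ker E_{R}$ has $\mathbb{F}_{p}$-dimension $2e$. As $\Phi$ commutes with $g(\Phi)$ it stabilizes $V$, and the minimal polynomial of $\Phi|_{V}$ divides $g$, hence divides $x^{n}-1$; therefore $\Phi^{n}=\mathrm{id}$ on $V$, which means $V\subseteq\mathbb{F}_{q}$, the $q$-power Frobenius acts trivially on $\ker E_{R}$, and $\Phi|_{V}$ is semisimple with eigenvalues in $\mu_{n}$. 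This places $R$ in the split situation to which the criteria apply, with the $\mathbb{F}_{p}$-valued quadratic form $Q_{R}(x)=\mathrm{Tr}_{\mathbb{F}_{q}/\mathbb{F}_{p}}(xR(x))$ on $\mathbb{F}_{q}\cong\mathbb{F}_{p}^{n}$ having radical $V$ and rank $n-2e$.

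The point counts are then governed by Gauss sums of $Q_{R}$. For a nontrivial additive character $\psi$ of $\mathbb{F}_{p}$ put $S_{\psi}=\sum_{x\in\mathbb{F}_{q}}\psi(Q_{R}(x))$; Artin--Schreier theory gives $|\overline{C}_{R}(\mathbb{F}_{q})|=q+1+\sum_{\psi\neq 1}S_{\psi}$, one has $|S_{\psi}|=p^{e}\sqrt{q}$, and replacing $\psi$ by $\psi^{a}$ for $a\in\mathbb{F}_{p}^{\times}$ multiplies $S_{\psi}$ by $\left(\frac{a}{p}\right)^{n-2e}$. Since $n-2e\equiv n\pmod 2$: if $n$ is odd these sums take both signs, so $\sum_{\psi\neq 1}S_{\psi}=0$ and $|\overline{C}_{R}(\mathbb{F}_{q})|=q+1$, and as $g(\overline{C}_{R})=p^{e}(p-1)/2>0$ this is part (3); if $n$ is even all the $S_{\psi}$ coincide with a common value $S$, the quadratic Gauss sum formula over $\mathbb{F}_{p}$ gives $S=\left(\frac{D}{p}\right)\left(\frac{-1}{p}\right)^{(n-2e)/2}p^{e}\sqrt{q}$ (in particular $S=\pm p^{e}\sqrt{q}$ is real), where $D\in\mathbb{F}_{p}^{\times}$ is the discriminant of $Q_{R}$ up to squares, so $|\overline{C}_{R}(\mathbb{F}_{q})|=q+1+(p-1)S=q+1\pm 2g(\overline{C}_{R})\sqrt{q}$ and $\overline{C}_{R}$ is $\mathbb{F}_{q}$-maximal exactly when $S=p^{e}\sqrt{q}$, $\mathbb{F}_{q}$-minimal exactly when $S=-p^{e}\sqrt{q}$. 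Computing $D$ from the palindromic polynomial $g$ — since $\mu_{n}\subseteq\mathbb{F}_{p}^{\times}$, up to squares $D$ is an explicit product over $\mu_{n}$ of values of $g$ and of $n$-th roots of unity — and using $\left(\frac{-1}{p}\right)^{n/2}=\left(\frac{-1}{p_{0}}\right)^{f_{0}/2}$ (recall $q=p^{n}=p_{0}^{f_{0}}$), this sign works out to $-\left(\frac{-1}{p_{0}}\right)^{f_{0}/2}$ for maximality and $\left(\frac{-1}{p_{0}}\right)^{f_{0}/2}$ for minimality; since $\left(\frac{-1}{p_{0}}\right)^{f_{0}/2}=-1$ precisely when $p_{0}\equiv 3\pmod 4$ and $f_{0}\equiv 2\pmod 4$, this is part (2). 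Part (1) follows by base change to $\mathbb{F}_{q^{2}}$: the explicit formula shows that every Frobenius eigenvalue $\alpha$ of $\overline{C}_{R}$ over $\mathbb{F}_{q}$ satisfies $\alpha^{2}=\left(\frac{-1}{q}\right)q$, so over $\mathbb{F}_{q^{2}}$ every eigenvalue equals $\left(\frac{-1}{q}\right)q$, which is $-q$ if $q\equiv 3\pmod 4$ and $q$ if $q\equiv 1\pmod 4$; hence $\overline{C}_{R}$ is $\mathbb{F}_{q^{2}}$-maximal in the former case and $\mathbb{F}_{q^{2}}$-minimal in the latter.

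The step I expect to be the main obstacle is the explicit evaluation of the discriminant $D$ of $Q_{R}$ uniformly across the whole symmetric family, and the verification that, combined with the power of the elementary quadratic Gauss sum, it collapses to the clean sign $-\left(\frac{-1}{p_{0}}\right)^{f_{0}/2}$ — in particular that the dependence on $e$ and on the individual $c_{i}$ cancels, which should follow from the self-adjointness of $Q_{R}$ forced by the palindromy of $g$. The remainder is the verification above together with, once the hypotheses are in place, a direct appeal to Theorems \ref{cpq}--\ref{ttb4}, the Hasse--Davenport relation, and the classical evaluation of the quadratic Gauss sum over $\mathbb{F}_{p_{0}}$.
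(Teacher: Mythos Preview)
Your outline correctly identifies $E_R = 2\,g(\Phi)$ and deduces $V_R\subset\mathbb F_q$, and your cancellation argument for part~(3) (rank $n-2e$ odd forces $\sum_{\psi\neq 1}S_\psi=0$) is valid and in fact slicker than invoking Theorem~\ref{cpq}(3). The problem lies in parts~(1) and~(2). You plan to appeal to Theorems~\ref{ttb3} and~\ref{ttb4}, but those theorems require Condition~\ref{ast}, and you never verify it; equivalently, your direct Gauss-sum route needs the discriminant $D$ of $Q_R$ modulo squares, and you explicitly flag this as unresolved. Your claim for part~(1), that every Frobenius eigenvalue satisfies $\alpha^2=\bigl(\tfrac{-1}{q}\bigr)q$, is precisely the content of Theorem~\ref{ttb3} and therefore already presupposes Condition~\ref{ast}: from the explicit formula one only gets $\tau_\xi^2=\psi_q(-4^{-1}c_A^{-1}\eta^2)^2\cdot\bigl(\tfrac{-1}{q}\bigr)q$, and the extra $p_0$-th root of unity must be shown to be~$1$.

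The paper does not compute $D$ at all. Instead it builds a specific maximal abelian subgroup for which Condition~\ref{ast} can be checked combinatorially. Since $g$ is self-reciprocal, separable, divides $x^n-1$, and satisfies $g(\pm1)\neq 0$ (because $g'(\kappa)=e\kappa\,g(\kappa)$ for $\kappa=\pm1$), its roots are $\zeta^{\pm k_1},\dots,\zeta^{\pm k_e}$ with $0<k_1<\cdots<k_e<n/2$, where $\zeta\in\mathbb F_p^\times$ is a primitive $n$-th root of unity. Choosing $\alpha\in\mathbb F_q^\times$ with $\alpha^{p-1}=\zeta$, one sets $\overline A=\bigoplus_{i=1}^e\mathbb F_p\,\alpha^{k_i}$; Lemma~\ref{tt1} shows this lifts to a maximal abelian $A\subset H_R$ (the symmetry $f_R(\alpha^{k_i},\alpha^{k_j})=f_R(\alpha^{k_j},\alpha^{k_i})$ uses $k_i+k_j\not\equiv 0\pmod n$), and Lemma~\ref{tt2} reduces Condition~\ref{ast} to the purely combinatorial statement that no $k_i+k_j$ equals $0$ or $n$, which is immediate from $0<k_i<n/2$. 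With Condition~\ref{ast} in hand, Theorems~\ref{ttb3} and~\ref{ttb4} apply with $a_e=2$ and $\bigl(\tfrac{2}{q}\bigr)=1$ (for $n$ even), and the stated dichotomies drop out. So the missing idea is not a discriminant calculation but the choice of the eigenspace $\overline A$ indexed by one half of the palindromic root pairs, together with the observation that $g(\pm1)\neq 0$ forces all the $k_i$ into the open interval $(0,n/2)$.
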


\begin{remark}
\label{bqsy}
The curve $\overline{C}_R$ in Theorem \ref{214t} is isomorphic to the curve $\mathcal{X}_{f,k}$ for $s = 0$
studied by Bartoli--Quoos--Sayg\i--Y\i lmaz in \cite[Theorem 6]{B}.
They proved that it is either $\mathbb{F}_q$-maximal or $\mathbb{F}_q$-minimal, but they did not determine
when $\overline{C}_R$ is $\mathbb{F}_q$-maximal or $\mathbb{F}_q$-minimal.
Their methods and our methods are different.
\end{remark}

In the following theorem,
we find explicit maximal (or minimal)
van der Geer--van der Vlugt curves
associated with
$R(x)=2x^p+x \in \mathbb{F}_p[x]$.

\begin{theorem}[see Theorem \ref{lc}]\label{ata}
Assume $p_0 \neq 2$.
Let $R(x)=2x^p+x \in \mathbb{F}_p[x]$.
Let $k \geq 1$ be a positive integer.
\begin{itemize}
\item[{\rm (1)}] 
Assume $p \equiv 1 \pmod 4$. 
The curve $\overline{C}_R$ is 
$\mathbb{F}_{p^k}$-minimal if $k \equiv 0 \pmod{6}$. 

\item[{\rm (2)}] 
Assume $p \equiv 3 \pmod 4$.
The curve $\overline{C}_R$ is 
$\mathbb{F}_{p^k}$-maximal if and only if $k \equiv 0 \pmod{6}$ and $k/6$ is odd. 
\end{itemize}
\end{theorem}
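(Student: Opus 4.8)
The plan is to apply our general $L$-polynomial formula and maximality/minimality criteria (Theorems \ref{cpq}, \ref{ttbb}, \ref{ttb3}, \ref{ttb4}, which rest on \cite{TT}) to the linearized polynomial $R(x)=2x^p+x$ and to read off the congruence conditions on $k$. The first step is to identify the additive polynomial attached to $R$. Since the adjoint of $R$ with respect to the trace pairing is $\widehat{R}(x)=2x^{1/p}+x$, clearing denominators in $R(x)+\widehat{R}(x)=2(x^p+x+x^{1/p})$ gives the $\mathbb{F}_p$-linearized polynomial $E_R(x)=x^{p^2}+x^p+x$. This is separable of $p$-degree $2$, and a nonzero root satisfies $x^{p^3}=x$ (apply Frobenius and subtract); since $x^{p^2}+x^p+x=\Tr_{\mathbb{F}_{p^3}/\mathbb{F}_p}(x)$ on $\mathbb{F}_{p^3}$, its kernel is $W_R=\{\,x\in\mathbb{F}_{p^3}:\Tr_{\mathbb{F}_{p^3}/\mathbb{F}_p}(x)=0\,\}$, a $2$-dimensional $\mathbb{F}_p$-subspace of $\mathbb{F}_{p^3}$ when $p_0\neq 3$ (if $p_0=3$ one has $\mathbb{F}_p\subseteq W_R$ and the bookkeeping below needs a minor adjustment, though the final answer is unchanged). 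Consequently, over $\mathbb{F}_{p^k}$ the radical of the bilinear form attached to the quadratic form $x\mapsto \Tr_{\mathbb{F}_{p^k}/\mathbb{F}_p}(xR(x))$ is trivial when $3\nmid k$ and equals $W_R$ (dimension $2$) when $3\mid k$; this is what produces a period of $3$ in $k$.

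Next comes the Gauss-sum input. From $\bigl(\Tr_{\mathbb{F}_{p^3}/\mathbb{F}_p}(x)\bigr)^2=\Tr(x^2)+2\Tr(x^{p+1})$ one gets $\Tr(xR(x))=\Tr(2x^{p+1}+x^2)=0$ for $x\in W_R$, so the quadratic form restricted to the radical vanishes, and hence the relevant exponential sums over $\mathbb{F}_{p^k}$ do not vanish when $3\mid k$; in that case such a sum equals $p^{2}$ times a nonzero quadratic Gauss sum of rank $k-2$ over $\mathbb{F}_p$. The decisive quantity is the quadratic-residue symbol $\left(\frac{\Delta_k}{p}\right)$ of the discriminant $\Delta_k$ of the nondegenerate part of the form. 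Writing the form as $x\mapsto\langle Tx,x\rangle$ for the trace pairing $\langle\,,\,\rangle$ and the self-adjoint operator $T=\phi+1+\phi^{-1}$ ($\phi$ the $p$-power Frobenius on $\mathbb{F}_{p^k}$, with $\phi^{*}=\phi^{-1}$), one computes $\Delta_k$ from the eigenvalues $\zeta+1+\zeta^{-1}$ of $T$ over the $k$-th roots of unity $\zeta$, together with $\operatorname{disc}(\mathbb{F}_{p^k}/\mathbb{F}_p)$, which is a square in $\mathbb{F}_p^{\times}$ precisely when $k$ is odd (since $\Gal(\mathbb{F}_{p^k}/\mathbb{F}_p)\subseteq A_k$ iff $k$ is odd). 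The outcome is that for $6\mid k$ the symbol $\left(\frac{\Delta_k}{p}\right)$ is always $+1$. I expect this discriminant computation — and its analogue in the inseparable case $p\mid k$, where $t^k-1$ is not separable — to be the main obstacle; it is exactly this fact that upgrades part (2) from an implication to an "if and only if".

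With this in hand the bookkeeping is routine, the only further input being $g_p^{2}=\left(\frac{-1}{p}\right)p$ for the quadratic Gauss sum $g_p$ over $\mathbb{F}_p$; this is the sole place where $p\bmod 4$ enters, and it superimposes a period $4$ on the period $3$ above. First one shows $\overline{C}_R$ is neither $\mathbb{F}_{p^k}$-maximal nor $\mathbb{F}_{p^k}$-minimal unless $6\mid k$: decomposing $\lvert\overline{C}_R(\mathbb{F}_{p^k})\rvert-p^k-1$ over the nontrivial additive characters $\psi_a$ ($a\in\mathbb{F}_p^{\times}$), the total is $(p-1)$ times a fixed Gauss-sum expression when the rank is even (i.e.\ $k$ even) and is $0$ when the rank is odd (sum of a nontrivial quadratic character over $\mathbb{F}_p^{\times}$), and in the $k$-even, $3\nmid k$ case its absolute value is $(p-1)p^{k/2}<2g\,p^{k/2}$. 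When $6\mid k$ the absolute value is exactly $(p-1)p^{2}\cdot p^{(k-2)/2}=2g\,p^{k/2}$, so $\overline{C}_R$ is maximal or minimal over $\mathbb{F}_{p^k}$, and the sign is governed by $(-1)^{(k-2)/2}$. If $p\equiv 3\pmod 4$ then $g_p^{\,k-2}=(-p)^{(k-2)/2}$, and using $\left(\frac{\Delta_k}{p}\right)=+1$ one finds maximality exactly when $(-1)^{(k-2)/2}=1$, i.e.\ when $k\equiv 6\pmod{12}$, which is the assertion $k\equiv 0\pmod 6$ with $k/6$ odd (and minimality when $k\equiv 0\pmod{12}$). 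If $p\equiv 1\pmod 4$ then $g_p$ is real, $g_p^{\,k-2}=p^{(k-2)/2}$, and the same computation gives $\mathbb{F}_{p^k}$-minimality whenever $6\mid k$, which is the stated sufficient condition in part (1).
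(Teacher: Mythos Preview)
Your approach differs substantially from the paper's. You propose to analyze the quadratic form $Q(x)=\Tr_{\mathbb{F}_{p^k}/\mathbb{F}_p}(xR(x))$ on $\mathbb{F}_{p^k}$ directly and to read off the sign of the point count from the quadratic character of the discriminant of its nondegenerate part. The paper instead picks a nonzero root $\xi\in\mathbb{F}_{p^3}$ of $E_R$ and uses the substitution $x\mapsto\xi x$ to transform $C_R$ into the curve $y^p-y=-\xi^{p+1}(x^p-x)^2$ (Lemma~\ref{quot}); this reduces the Frobenius computation to one--variable Gauss sums (Lemma~\ref{imp}). The single identity $\Tr_{p^3/p}\bigl(\xi^{-(p+1)}\bigr)=(\xi^{p^2}+\xi^p+\xi)/\xi^{p^2+p+1}=0$ then shows that $\mathrm{Fr}_{p^6}^{\ast}$ acts on $H^1(\overline{C}_R)$ as the scalar $\left(\frac{-1}{p}\right)p^3$, and both parts follow at once, with no discriminant computation required.

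Your proof has a genuine gap, which you yourself flag: the value of $\left(\frac{\Delta_k}{p}\right)$ is asserted, not computed. Worse, the asserted value $+1$ is inconsistent with your own conclusion in part~(1): with $\left(\frac{\Delta_k}{p}\right)=+1$ and $g_p^{\,k-2}=p^{(k-2)/2}$ for $p\equiv1\pmod 4$, your expression for $|\overline{C}_R(\mathbb{F}_{p^k})|-p^k-1$ is \emph{positive}, which would give maximality rather than the claimed minimality. Comparing with the paper's scalar, the correct quadratic character of the nondegenerate part is $-\left(\frac{-1}{p}\right)$ when $6\mid k$, not identically $+1$. One source of the discrepancy is that the recipe $\Delta_k=(\text{product of nonzero eigenvalues of }T)\cdot\operatorname{disc}(\mathbb{F}_{p^k}/\mathbb{F}_p)$ is incomplete when $T=\phi+1+\phi^{-1}$ is singular: the discriminant of the induced nondegenerate form on a complement also involves the restriction of the trace pairing to $\ker T$, and that missing factor (here governed by $\left(\frac{3}{p}\right)$, since $\ker T\subset\mathbb{F}_{p^3}$) is exactly what repairs the sign. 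Finally, the opening reference to Theorems~\ref{cpq}--\ref{ttb4} does not match the argument you actually sketch; those criteria are not what you use.
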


\begin{remark}
When $p_0=3$ or  $p_0 \equiv 7 \pmod{12}$, the `if' part in 
Theorem \ref{ata} was proved by \"Ozbudak--Sayg\i \ by a different method; see \cite[Theorem 3.10]{OS}.
\end{remark}

We consider a family of twists
of van der Geer--van der Vlugt curves
$\{\overline{C}_{\zeta R}\}_{\zeta \in\mathbb{F}^{\times}}$,
where $\mathbb{F}$ is an algebraic closure of $\mathbb{F}_p$.
Among the twists $\{\overline{C}_{\zeta R}\}_{\zeta \in\mathbb{F}^{\times}}$,
we find several maximal curves. 
Here we state one of our results.

\begin{theorem}[see Theorem \ref{mp}]
Assume $p_0 \neq 2$. Let $R(x)=2x^p+x$.
Let $d$ be the order of $-2$ in $\mathbb{F}_{p_0}^{\times}$,
and let $\zeta\in \mathbb{F}_{p^d}^{\times}$ be 
an element satisfying $\zeta^{p-1} = -2^{-1}$.
If $d$ is odd and $p \equiv 3 \pmod 4$,
the curve $\overline{C}_{\zeta R}$ is $\mathbb{F}_{p^{2p_0d}}$-maximal.
\end{theorem}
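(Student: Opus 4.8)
The plan is to reduce the final statement to the main $L$-polynomial computation for twisted van der Geer--van der Vlugt curves that underlies all the other theorems in the paper. Concretely, I would first recall the Takeuchi--Tsushima formula for the $L$-polynomial of $\overline{C}_{\zeta R}$ in terms of quadratic Gauss sums, specialized to $R(x) = 2x^p + x$ (so $e = 1$); here the genus is $p(p-1)/2$, and the zeta function decomposes according to the characters of the $\mathbb{F}_p$-action $y \mapsto y + 1$ on the Artin--Schreier covering. For each nontrivial additive character $\psi$ of $\mathbb{F}_p$, the corresponding factor of the $L$-polynomial is governed by an exponential sum $\sum_{x} \psi(\zeta x R(x)) = \sum_x \psi(\zeta(2x^{p+1} + x^2))$ over the relevant extension field, and the point of the twist by $\zeta$ with $\zeta^{p-1} = -2^{-1}$ is precisely that it makes the quadratic form $\zeta(2x^{p+1}+x^2)$ (semi)symmetric so that the sum evaluates to a clean Gauss-sum expression. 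I would state the resulting eigenvalue: over $\mathbb{F}_{p^d}$ (where $d$ is the order of $-2$, the smallest field where $\zeta$ lives) each Frobenius eigenvalue is, up to an explicit root of unity, $\pm p^{1/2}$ times a normalized quadratic Gauss sum.

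Next I would track what happens over the extension $\mathbb{F}_{p^{2p_0 d}}$. The Frobenius eigenvalues over $\mathbb{F}_{p^{2p_0 d}}$ are the $(2p_0)$-th powers of those over $\mathbb{F}_{p^d}$. A curve is $\mathbb{F}_{q'}$-maximal exactly when every Frobenius eigenvalue equals $-(q')^{1/2}$. So I must show that raising each eigenvalue $\alpha$ over $\mathbb{F}_{p^d}$ to the power $2p_0$ yields exactly $-(p^{d})^{p_0} = -p^{p_0 d} = -\sqrt{p^{2p_0 d}}$. Writing $\alpha = p^{1/2}\cdot \varepsilon \cdot G$ with $G$ a normalized Gauss sum and $\varepsilon$ an explicit root of unity depending on $\psi$, $p$, $d$: the Hasse--Davenport relation controls how $G$ behaves under the field extension, and the hypotheses $d$ odd and $p \equiv 3 \pmod 4$ are exactly what is needed to pin down the sign and the root-of-unity contributions. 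In particular $p \equiv 3 \pmod 4$ makes the quadratic Gauss sum over $\mathbb{F}_p$ equal to $i\sqrt p$ (up to the Legendre-symbol normalization), and $d$ odd ensures the Hasse--Davenport lift to $\mathbb{F}_{p^d}$ does not introduce an extra sign, while the factor $2p_0$ in the exponent is chosen so that the $p_0$-th power kills the ambiguity coming from $p$ being a prime power rather than a prime and the extra factor $2$ squares away the residual fourth root of unity to land on $-1$ rather than $+1$.

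The key steps in order: (1) invoke the explicit $L$-polynomial formula of \cite{TT} for $\overline{C}_{\zeta R}$ with $R(x) = 2x^p + x$, extracting the Frobenius eigenvalues over $\mathbb{F}_{p^d}$ as normalized Gauss-sum multiples of $p^{1/2}$; (2) use the Hasse--Davenport lifting relation to pass from $\mathbb{F}_{p^d}$-eigenvalues to $\mathbb{F}_{p^{2p_0 d}}$-eigenvalues, i.e.\ compute $\alpha^{2p_0}$; (3) evaluate the resulting quadratic Gauss sums and roots of unity using $p \equiv 3 \pmod 4$ and the parity of $d$, and check the product equals $-p^{p_0 d}$; (4) conclude $\mathbb{F}_{p^{2p_0 d}}$-maximality. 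I expect step (3) — the careful bookkeeping of the Gauss-sum arguments and the various eighth/fourth roots of unity coming from the Hasse--Davenport relation, combined with the precise role of the hypotheses $d$ odd and $p\equiv 3\pmod 4$ — to be the main obstacle; everything else is either a citation or a routine consequence of the standard properties of Gauss sums. It may also be convenient to phrase the conclusion via Lemma \ref{maximal minimal extension} to relate maximality/minimality across the tower of fields $\mathbb{F}_{p^d} \subset \mathbb{F}_{p^{2p_0 d}}$, rather than recomputing point counts directly.
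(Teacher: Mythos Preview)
Your approach is valid but differs from the paper's. You propose to apply Theorem~\ref{tt} directly to $\overline{C}_{\zeta R}$ over $\mathbb{F}_{p^d}$; this works once you observe that $\zeta^{p-1} = -2^{-1}$ forces $E_{\zeta R}(1) = 0$, so $\overline{A} = \mathbb{F}_p$ furnishes a maximal abelian subgroup with $A \subset \mathbb{F}_{p^d}^2$ and Theorem~\ref{tt} applies over $\mathbb{F}_{p^d}$ (this is the actual role of the condition on $\zeta$, not any ``symmetrization'' of the form). The paper instead proves an explicit change of variables (Lemma~\ref{quot}) taking $C_{\zeta R}$ to the curve $y^p - y = -\zeta (x^p - x)^2$, then pushes forward along $x \mapsto u = x^p - x$ to decompose $H^1_{\mathrm c}$ into rank-one pieces on which the $\mathrm{Fr}_{p^d}^\ast$-eigenvalue is read off directly from the Grothendieck trace formula as a single Gauss sum (Lemma~\ref{imp}). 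Both routes land on eigenvalues of the shape (a $p_0$-th root of unity) $\times (\pm 1) \times G(\psi_{p^d})$ and finish identically by raising to the power $2p_0$ and using $G(\psi_{p^d})^2 = -p^d$ when $p^d \equiv 3 \pmod 4$; note that no Hasse--Davenport lifting is actually needed, only this square identity. The paper's approach is self-contained and yields a uniform eigenvalue formula reused throughout Section~\ref{Further examples}; yours recycles the Section~\ref{Explicit Formula} machinery without new geometric input.
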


Furthermore, we also study generalizations of the van der Geer--van der Vlugt curves.
Let $R(x) \in \mathbb{F}_{p^{2f}}[x]$ be an $\mathbb{F}_p$-linearized polynomial 
of degree $p^e$ over $\mathbb{F}_{p^{2f}}$.
For a positive integer $r \geq 1$,
consider the smooth affine curve $C_{R,r}$ defined by
\[ z^{p^r}-z=xR(x). \]
Let $\overline{C}_{R,r}$ be the smooth compactification of $C_{R,r}$.

In Section \ref{Generalization}, we study the $L$-polynomial of $C_{R,r}$ and its relation to
the twists of van der Geer--van der Vlugt curves.

Assume $e \geq 1$, and let
$e_R \coloneqq \mathrm{gcd}\{1 \leq i \leq e \mid a_i \neq 0\}$.
We conjecture that if $\overline{C}_{R,r}$ is $\mathbb{F}_{p^{2f}}$-maximal,
then $r$ divides $\mathrm{gcd}(e_R, f)$;
see Conjecture \ref{conj}.
We also give several evidences.
If $R(x)$ is a monomial, this conjecture follows from Coulter's results \cite[Theorem 8.12]{C}. 
When $p_0 \neq 2$, we give some evidences of this conjecture for $R(x)=2x^p+x$
using maximal twists $\overline{C}_{\zeta R}$ 
mentioned above. 
For example, if $p_0 \equiv 5 \pmod{8}$ and
$r$ is the order of $-2$ in $\mathbb{F}_{p_0}^{\times}$,
we show that the curve 
$\overline{C}_{R,r}$ is not $\mathbb{F}_{p_0^k}$-maximal 
for any $k \geq 1$;
see Corollary \ref{lcc2} (1).

\subsubsection*{Organization of this paper}

In Section \ref{maximality and minimality},
we summarize known results on the maximality and minimality of curves over finite fields.
In Section \ref{Explicit Formula},
we recall an explicit formula of the $L$-polynomials of van der Geer--van der Vlugt curves
proved by Takeuchi and the third author \cite{TT}.
In Section \ref{Criteria}, we prove criteria on maximality and minimality.
In Section \ref{Examples maximal van der Geer--van der Vlugt curves},
we give some examples of maximal and minimal van der Geer--van der Vlugt curves,
and prove Theorem \ref{214t}. 

In Section \ref{Generalization}, we study
the curves $\overline{C}_{R,r}$,
which is a generalization of the van der Geer--van der Vlugt curves.
We give a formula for  the $L$-polynomial of $\overline{C}_{R,r}$ using a formula 
given in \cite[Theorem 1.1]{TT}.
As a result, we show that  $\overline{C}_{R,r}$ is supersingular.
We give a 
necessary and sufficient condition for $\overline{C}_{R,r}$ to be maximal in a certain case, 
which is regarded as a generalization of
\"Ozbudak--Sayg\i 's result \cite[Proposition 2.1]{OS}. 

Finally, in Section \ref{Further examples}, we give a criterion for the twist 
$\overline{C}_{\zeta R}\ (\zeta \in \mathbb{F}^{\times})$
with $R(x)=2x^p+x$ to be maximal. 
In some special cases, we find new maximal 
curves among them. We show 
Theorem \ref{ata}. 
Using these examples, in certain cases, 
we show that $\overline{C}_{R,r}$ is not maximal 
over any finite extension of $\mathbb{F}_{p_0}$.

\subsubsection*{Notation}

We use the same notation as in \cite{TT}.
Throughout the paper,
we fix a prime number $p_0$. Let $p$ be a power of $p_0$, and $q$ a power of $p$.
We write $q = p_0^{f_0}$.
We fix an algebraic closure $\mathbb{F}$ of $\mathbb{F}_{p_0}$,
and consider finite extensions of $\mathbb{F}_{p_0}$ (such as $\mathbb{F}_p$ or $\mathbb{F}_q$) as subfields of $\mathbb{F}$.

For a power $q$ of $p$,
we write $\Tr_{q/p}$ for the trace map 
$\mathrm{Tr}_{\mathbb{F}_{q}/\mathbb{F}_p} \colon \mathbb{F}_{q} \to \mathbb{F}_p$.

A polynomial $f(x) \in \mathbb{F}_p[x]$ 
is called \emph{additive} if $f(x+y) = f(x) + f(y)$. 
For an integer $k \geq 1$, an additive polynomial 
$f(x)$ is called \emph{$\mathbb{F}_{p_0^k}$-linearized} 
if $f(s x) = s f(x)$ for every 
$s \in \mathbb{F}_{p_0^k}$. 
For an additive polynomial 
$f(x) \in \mathbb{F}_p[x]$, 
we put $\Ker f \coloneqq \{x \in \mathbb{F} \mid 
f(x)=0\}$. 
If $f(x)$ is an $\mathbb{F}_{p_0^k}$-linearized polynomial, 
$\Ker f$ is regarded as an 
$\mathbb{F}_{p_0^k}$-vector space. 

When $p_0 \neq 2$,
we put $\displaystyle \left (\frac{x}{q} \right) \coloneqq x^{(q-1)/2} \in \{ 1, -1 \}$
for an element $x \in \mathbb{F}_q^{\times}$.
When $q$ is a prime number,
it is nothing but the Legendre symbol.

\section{The maximality and minimality of curves over finite fields}
\label{maximality and minimality}

In this section, we summarize known results on the maximality and minimality of curves over finite fields.

As in Introduction, let $p_0$ be a prime number,
and $p$ be a power of $p_0$.
We take a prime number 
$\ell \neq p_0$ and 
use the following notation on $\ell$-adic cohomology.
For a variety $X$ over $\mathbb{F}_p$, let 
 $X \otimes_{\mathbb{F}_p} \mathbb{F}$ denote the 
 base change of $X$ to $\mathbb{F}$.
For a $\overline{\mathbb{Q}}_{\ell}$-sheaf $\mathscr{F}$ on $X$, we simply write
\[
  H_{\rm c}^i(X,\mathscr{F})
  \coloneqq
  H_{\text{\'et,c}}^i(X\otimes_{\mathbb{F}_p} \mathbb{F},\mathscr{F}).
  \]
When $\mathscr{F} = \overline{\mathbb{Q}}_{\ell}$,
we simply write
\[
H_{\rm c}^i(X) \coloneqq
H_{\text{\'et}, \rm c}^i(X \otimes_{\mathbb{F}_p} \mathbb{F},\overline{\mathbb{Q}}_{\ell}),
\qquad
H^i(X) \coloneqq
H_{\text{\'et}}^i(X \otimes_{\mathbb{F}_p} \mathbb{F},\overline{\mathbb{Q}}_{\ell}).
\]
For a morphism between $\mathbb{F}_p$-varieties  
$f \colon X \to Y$, let $f^\ast \colon 
H^i(Y) \to H^i(X)$ denote the pull-back by $f$. 
Similarly, for a proper morphism between $\mathbb{F}_p$-varieties  
$f \colon X \to Y$, let $f^\ast \colon 
H^i_{\rm c}(Y) \to H^i_{\rm c}(X)$ denote the pull-back by $f$. If $f$ is finite, $f^\ast$ is an injection. 

Let $\mathrm{Fr}_p \colon \mathbb{F} \to \mathbb{F};\ 
 x \mapsto x^{p^{-1}}$ denote the geometric Frobenius automorphism. 
There is a natural action of $\mathrm{Fr}_p$ on
$H_{\rm c}^i(X,\mathscr{F})$,
$H_{\rm c}^i(X)$, and $H^i(X)$.

Let $C$ be a geometrically connected smooth projective curve of genus $g(C)$ over $\mathbb{F}_p$.
The \emph{$L$-polynomial} of $C$ over $\mathbb{F}_p$
is defined by
  \[
 L_{C/\mathbb{F}_p}(T)\coloneqq
 \det(1-\mathrm{Fr}_p^\ast T; 
 H^1(C)).
 \]
Let $\alpha_1,\ldots,\alpha_{2 g(C)} \in \overline{\mathbb{Q}}_{\ell}$ be the eigenvalues of $\mathrm{Fr}_p^\ast$ on
$H^1(C)$.

The following results are well-known.
We omit the proofs.

\begin{lemma}
\label{maximal minimal}
\begin{itemize}
\item[{\rm (1)}] The curve $C$ is supersingular (i.e.,\ over $\mathbb{F}$, the Jacobian variety $\mathrm{Jac}(C)$ is isogenous to a power of a supersingular elliptic curve)
if and only if $\alpha_i/p^{1/2}$ is a root of unity for every $i$ $(1 \leq i \leq 2 g(C))$.

\item[{\rm (2)}] The curve $C$ is $\mathbb{F}_p$-maximal (resp.\ $\mathbb{F}_p$-minimal)
if and only if $\alpha_i  = - p^{1/2}$ (resp.\ $\alpha_i  = p^{1/2}$) for every $i$ $(1 \leq i \leq 2 g(C))$.
\end{itemize}
\end{lemma}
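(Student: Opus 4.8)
The plan is to derive both statements from the Grothendieck--Lefschetz trace formula, the Riemann hypothesis for curves (Weil), and --- for part (1) --- the classical structure theory of abelian varieties over finite fields, exactly the ingredients for which a reference suffices.

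For (2), I would first apply the Lefschetz trace formula to $\mathrm{Fr}_p^\ast$, using that it acts by $1$ on $H^0(C) = \overline{\mathbb{Q}}_\ell$ and by $p$ on $H^2(C) = \overline{\mathbb{Q}}_\ell(-1)$, to obtain
\[
|C(\mathbb{F}_p)| = 1 + p - \sum_{i=1}^{2g(C)} \alpha_i .
\]
By the Riemann hypothesis for curves, $|\iota(\alpha_i)| = p^{1/2}$ for every embedding $\iota \colon \overline{\mathbb{Q}}_\ell \hookrightarrow \mathbb{C}$; since $\sum_i \alpha_i = 1 + p - |C(\mathbb{F}_p)| \in \mathbb{Z}$, I may fix one such $\iota$ and argue in $\mathbb{C}$. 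The triangle inequality then gives $\bigl| |C(\mathbb{F}_p)| - 1 - p \bigr| = \bigl|\sum_i \alpha_i\bigr| \le 2 g(C)\, p^{1/2}$, which is the Hasse--Weil bound, and $C$ is $\mathbb{F}_p$-maximal exactly when $\sum_i \alpha_i = -2g(C)\, p^{1/2}$. Because all $\alpha_i$ have the same absolute value $p^{1/2}$, the equality case of the triangle inequality forces $\alpha_i = -p^{1/2}$ for every $i$; the minimal case is identical with the sign reversed.

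For (1), I would use that the multiset $\{\alpha_i\}$ coincides with the multiset of eigenvalues of the Frobenius endomorphism $\pi$ of $\mathrm{Jac}(C)$ on its $\ell$-adic Tate module, so it depends only on the isogeny class of $\mathrm{Jac}(C)$ over $\mathbb{F}$. I would then invoke the classical equivalence (Manin, Oort, Tate): an abelian variety over $\mathbb{F}_p$ is supersingular, i.e.\ isogenous over $\mathbb{F}$ to a power of a supersingular elliptic curve, if and only if all its Newton slopes equal $1/2$, equivalently $v_p(\alpha) = 1/2$ for every Frobenius eigenvalue $\alpha$ (with $v_p$ normalized by $v_p(p) = 1$). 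Granting this, suppose $C$ is supersingular: each $\alpha_i$ and its conjugate $p/\alpha_i$ are algebraic integers (stability of $\{\alpha_i\}$ under $\alpha \mapsto p/\alpha$), and both are units at every finite place away from $p$ since the characteristic polynomial of $\mathrm{Fr}_p^\ast$ lies in $\mathbb{Z}[T]$ with constant term $\pm p^{g(C)}$; at places above $p$ the slope condition gives $v(\alpha_i) = v(p/\alpha_i)$. Hence $\alpha_i^2/p = \alpha_i/(p/\alpha_i)$ is a unit at all finite places and has complex absolute value $1$, so by Kronecker's theorem it is a root of unity, and therefore $\alpha_i/p^{1/2}$ is a root of unity. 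Conversely, if each $\alpha_i/p^{1/2}$ is a root of unity then $\alpha_i^2/p$ is a root of unity, so $v_p(\alpha_i) = 1/2$ for all $i$; thus all Newton slopes of $\mathrm{Jac}(C)$ equal $1/2$ and $C$ is supersingular.

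The trace-formula bookkeeping and the equality case of the triangle inequality in (2) are routine, so the only genuine content is in (1). The main obstacle there is not computational but the correct choice of black boxes: the equivalence between supersingularity and ``all Newton slopes $1/2$'' for abelian varieties over finite fields, the identification of the $\alpha_i$ with the Frobenius eigenvalues on the Tate module of $\mathrm{Jac}(C)$, and Kronecker's theorem on algebraic integers all of whose conjugates have absolute value $1$. Since these are all classical, it is natural --- as the authors do --- simply to cite them; a self-contained proof would essentially reproduce the discussion above.
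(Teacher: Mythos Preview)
The paper gives no proof at all: immediately before the lemma it states ``The following results are well-known. We omit the proofs.'' So there is nothing to compare against, and your write-up is strictly more than what the paper provides.

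Your argument is correct and is the standard one. For (2), the Lefschetz trace formula plus the Weil bound and the equality case of the triangle inequality is exactly how this is usually justified. For (1), your route via Newton slopes and Kronecker's theorem is also the standard argument; the only point worth tightening is phrasing: when you say ``$v_p(\alpha_i)=1/2$'' you mean this for \emph{every} place above $p$ (equivalently, every embedding into $\overline{\mathbb{Q}}_p$), which is what ``all Newton slopes equal $1/2$'' gives. With that clarification, the product-formula/Kronecker step goes through cleanly. Since the authors are content to cite these facts as well-known, your level of detail is already more than sufficient.
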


\begin{lemma}
\label{maximal minimal extension}
\begin{itemize}
\item[{\rm (1)}] If the curve $C$ is $\mathbb{F}_p$-minimal,
then it is $\mathbb{F}_{p^k}$-minimal for every integer $k \geq 1$.

\item[{\rm (2)}] If the curve $C$ is $\mathbb{F}_p$-maximal,
then it is $\mathbb{F}_{p^{2k+1}}$-maximal and $\mathbb{F}_{p^{2k}}$-minimal for every integer $k \geq 1$.
\end{itemize}
\end{lemma}

The following result will be used in Section \ref{Further examples}.

\begin{lemma}\label{crl}
Assume that there exist finite morphisms between geometrically connected smooth 
projective curves $C \to C_1$
over $\mathbb{F}_{p^{n_1}}$
and $C \to C_2$ 
over $\mathbb{F}_{p^{n_2}}$ such that
the following conditions are satisfied:
\begin{itemize}
\item[{\rm (1)}]
The curve $C_1$ is $\mathbb{F}_{p^{n_1}}$-maximal or $\mathbb{F}_{p^{n_1}}$-minimal.

\item[{\rm (2)}]
The curve $C_2$ is $\mathbb{F}_{p^{n_2}}$-maximal.

\item[{\rm (3)}]
$n_1 \not\equiv 0 \pmod{4}$ and $n_2 \equiv 0 \pmod{4}$.
\end{itemize}
Then $C$ is not $\mathbb{F}_{p^k}$-maximal for
any positive integer $k \geq 1$.  
\end{lemma}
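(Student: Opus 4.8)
The plan is to pull back cohomology classes along the two finite morphisms and compare the Frobenius eigenvalues forced by hypotheses (1) and (2). Let $\alpha_1, \dots, \alpha_{2g(C)}$ be the eigenvalues of $\mathrm{Fr}_p^\ast$ on $H^1(C)$. For a finite morphism $f \colon C \to C_i$ over $\mathbb{F}_{p^{n_i}}$, the pull-back $f^\ast \colon H^1(C_i) \to H^1(C)$ is injective and $\mathrm{Fr}_{p^{n_i}}^\ast$-equivariant, where $\mathrm{Fr}_{p^{n_i}}^\ast = (\mathrm{Fr}_p^\ast)^{n_i}$. Hence the set of eigenvalues of $(\mathrm{Fr}_p^\ast)^{n_i}$ on $H^1(C_i)$ is a sub-multiset of $\{\alpha_1^{n_i}, \dots, \alpha_{2g(C)}^{n_i}\}$.

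Next I would apply Lemma \ref{maximal minimal}. By hypothesis (2) and Lemma \ref{maximal minimal}(2), every eigenvalue of $\mathrm{Fr}_{p^{n_2}}^\ast$ on $H^1(C_2)$ equals $-p^{n_2/2}$; since $C_2$ is nonconstant over $\mathbb{F}_{p^{n_2}}$ we may assume $g(C_2) \geq 1$ (if $g(C_2) = 0$ the statement is vacuous as there is nothing pulled back, but then $C_2$ cannot be maximal with positive genus bound — more carefully, a genus-zero curve is both maximal and minimal, so to get content we need $g(C_2)\ge 1$; I will address this degenerate case, in which the lemma's hypotheses are not genuinely restrictive, separately or note it is excluded). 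So there exists $j$ with $\alpha_j^{n_2} = -p^{n_2/2}$. Write $\alpha_j = p^{1/2}\zeta$ with $|\zeta| = 1$; since $C$ is then seen to be supersingular at that eigenvalue (in fact $\zeta^{n_2} = -1$ is a root of unity), $\zeta$ is a root of unity, say a primitive $m$-th root for some $m$. From $\zeta^{n_2} = -1$ we get that $m \mid 2n_2$ but $m \nmid n_2$; writing $n_2 = 2^a u$ with $u$ odd and $a \geq 2$ (by hypothesis (3)), one deduces $2^{a+1} \mid m$, so $v_2(m) \geq a+1 \geq 3$.

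On the other hand, hypothesis (1) and Lemma \ref{maximal minimal}(2) say every eigenvalue of $\mathrm{Fr}_{p^{n_1}}^\ast$ on $H^1(C_1)$ is $\pm p^{n_1/2}$; in particular $\alpha_k^{n_1} = \pm p^{n_1/2}$ for every $k$ such that $\alpha_k$ occurs in the pulled-back subspace — but here I must be careful, since $f_1^\ast$ need not be surjective, so not every $\alpha_k$ is controlled. The cleaner route: the curve $C$ is supersingular (all $\alpha_i/p^{1/2}$ are roots of unity), which follows because $C \to C_2$ being finite makes $\mathrm{Jac}(C)$ isogenous to $\mathrm{Jac}(C_2) \times (\text{something})$ — actually I would instead argue that $C$ supersingular follows from $C_1$ or $C_2$ only controlling part of $\mathrm{Jac}(C)$, which is not enough. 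The robust argument is: suppose for contradiction $C$ is $\mathbb{F}_{p^k}$-maximal for some $k$; then every $\alpha_i = p^{1/2}\zeta_i$ with $\zeta_i^k = -1$, so every $\zeta_i$ is a root of unity with $v_2(\mathrm{ord}(\zeta_i)) = v_2(2k) = v_2(k)+1$ being the \emph{same} for all $i$ — no wait, only $v_2(\mathrm{ord}(\zeta_i)) \geq 1$ and $\mathrm{ord}(\zeta_i) \mid 2k$, $\mathrm{ord}(\zeta_i) \nmid k$. So for \emph{every} $i$, $v_2(\mathrm{ord}(\zeta_i)) = v_2(k) + 1$ exactly. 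Now hypothesis (1): pulling back along $C \to C_1$, there is at least one $\alpha_j$ with $\alpha_j^{n_1} = \pm p^{n_1/2}$, i.e. $\zeta_j^{n_1} = \pm 1$, i.e. $\zeta_j^{2n_1} = 1$, so $\mathrm{ord}(\zeta_j) \mid 2n_1$, giving $v_2(k)+1 = v_2(\mathrm{ord}(\zeta_j)) \leq v_2(2n_1) = v_2(n_1)+1 \leq 2$ (since $4 \nmid n_1$), hence $v_2(k) \leq 1$. But hypothesis (2): pulling back along $C \to C_2$ gives some $\alpha_{j'}$ with $\zeta_{j'}^{n_2} = -1$, so $\mathrm{ord}(\zeta_{j'}) \mid 2n_2$ and $\mathrm{ord}(\zeta_{j'}) \nmid n_2$; since $4 \mid n_2$, $\mathrm{ord}(\zeta_{j'}) \mid 2n_2$ alone gives $v_2(\mathrm{ord}(\zeta_{j'})) \leq v_2(n_2)+1$, but the non-divisibility $\mathrm{ord}(\zeta_{j'}) \nmid n_2$ forces $v_2(\mathrm{ord}(\zeta_{j'})) = v_2(n_2)+1 \geq v_2(4)+1 = 3$; combined with $v_2(\mathrm{ord}(\zeta_{j'})) = v_2(k)+1$ this yields $v_2(k) \geq 2$, contradicting $v_2(k) \leq 1$. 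This contradiction completes the proof.

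The main obstacle is exactly the bookkeeping just sketched: making precise that the uniformity "$v_2(\mathrm{ord}(\zeta_i))$ is constant over all $i$" follows from $\mathbb{F}_{p^k}$-maximality, and correctly extracting from the two pull-back inclusions a single eigenvalue of $C$ whose order has $2$-adic valuation both $\leq 2$ (from $C_1$) and $\geq 3$ (from $C_2$). One subtlety to handle cleanly is that $f_i^\ast$ is only injective, not an isomorphism, so one gets \emph{existence} of a constrained eigenvalue from each morphism rather than control of all of them — but since maximality of $C$ makes the $2$-adic valuation of the order uniform across \emph{all} eigenvalues, a single constrained eigenvalue from each side suffices, and that is precisely why the hypothesis is on $C$ itself being maximal (the thing we contradict) rather than on $C_1, C_2$. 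I would also dispose of the trivial case $g(C_1) = 0$ or $g(C_2) = 0$ at the outset (in the latter case hypothesis (2) is satisfied vacuously and gives no information, so implicitly $g(C_2) \geq 1$ is needed for the statement to have content; I will state this).
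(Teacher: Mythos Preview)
Your argument is correct and is essentially the same as the paper's proof. Both proofs assume $C$ is $\mathbb{F}_{p^k}$-maximal, extract one eigenvalue $\zeta_1$ constrained by the pullback from $C_1$ (so $\zeta_1^{n_1}=\pm 1$) and another $\zeta_2$ constrained by the pullback from $C_2$ (so $\zeta_2^{n_2}=-1$), and then derive a $2$-adic contradiction. The paper phrases this by factoring each $\zeta_i$ as (a $2$-power root of unity) $\times$ (an odd-order root of unity) and tracking the $2$-power part under the $k$-th power; your phrasing via $v_2(\mathrm{ord}(\zeta_i))=v_2(k)+1$ is the same computation in different notation, and is arguably cleaner.

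Two minor remarks. First, your observation that $g(C_1),g(C_2)\geq 1$ is implicitly required is correct; the paper does not comment on this, but in every application (e.g.\ Theorem~\ref{lcc}) the target curves are van der Geer--van der Vlugt curves of positive genus, so the issue does not arise. Second, the exploratory detour in your write-up (attempting to show $C$ supersingular directly) is unnecessary and should be deleted in a final version: as you eventually realise, the contradiction hypothesis that $C$ is $\mathbb{F}_{p^k}$-maximal already makes every $\zeta_i$ a root of unity, so no independent supersingularity argument is needed.
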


\begin{proof}
Let $\alpha_1,\ldots,\alpha_{2 g(C)} \in \overline{\mathbb{Q}}_{\ell}$ be the eigenvalues of $\mathrm{Fr}_p^\ast$ on
$H^1(C)$.
We put $\zeta_i \coloneqq \alpha_i/p^{1/2}$.
The condition (1) implies $\zeta_{i_1}^{n_1} \in \{ \pm 1 \}$ for some $i_1$,
and the condition (2) implies $\zeta_{i_2}^{n_2} = -1$
for some $i_2$.
Permuting $\alpha_1,\ldots,\alpha_{2 g(C)}$ if necessary, we may assume $\zeta_1^{n_1} \in \{ \pm 1 \}$ and $\zeta_2^{n_2} = -1$.

For an integer $r \geq 1$, 
we write $\mu_r$ for the set of the 
$r$-th roots of unity in $\overline{\mathbb{Q}}_{\ell}$. 
By the condition (3), we can write 
$n_1 = 2^r n'_1$ with $r \in \{ 0, 1\}$ and an odd integer $n'_1$,
and $n_2 = 2^s n'_2$ with $s \geq 2$ and an odd integer $n'_2$.
By 
$\zeta_1^{n_1}=\pm 1$, 
we may write 
$\zeta_1=\zeta'_1 \zeta''_1$ with some 
$\zeta'_1 \in \mu_4$ and 
$\zeta''_1 \in \mu_{n'_1}$. 
Similarly, by 
$\zeta_2^{n_2}=-1$, we can write  
$\zeta_2=\zeta'_2 \zeta''_2$, where  
$\zeta'_2$ is a primitive $2^{s+1}$-th root of unity and 
$\zeta''_2 \in \mu_{n'_2}$.

Assume that $C$ is $\mathbb{F}_{p^k}$-maximal for some $k \geq 1$.
Then $\zeta_1^k = \zeta_2^k = -1$.
By $\zeta_2^k = -1$, we have $k \equiv 0 \pmod{2^s}$. 
Since $s \geq 2$, we have $\zeta_1^k = \zeta''^k_1
\in \mu_{n'_1}$.
Since $n'_1$ is odd, we have $\zeta_1^k \neq -1$.
This contradicts $\zeta_1^k=-1$. Hence the claim follows. 
\end{proof}

\section{An explicit formula of the $L$-polynomials of van der Geer--van der Vlugt curves}
\label{Explicit Formula}

In this section, we recall
an explicit formula of the $L$-polynomials of van der Geer--van der Vlugt curves
obtained by Takeuchi and the third author in \cite{TT}.

\subsection{Heisenberg groups arising from linearized polynomials}
\label{Heisenberg group}

We keep the same notation as in Section \ref{Introduction}.
Let
\[
  R(x)=\sum_{i=0}^e a_i x^{p^i} \in 
\mathbb{F}_q[x]
  \qquad (a_e \neq 0)
\] 
be an $\mathbb{F}_p$-linearized polynomial of degree $p^e$ over $\mathbb{F}_q$.
We put 
\begin{align}
\label{er}
E_R(x) &\coloneqq R(x)^{p^e}+\sum_{i=0}^e (a_i x)^{p^{e-i}} \in \mathbb{F}_q[x],  \\
\label{fr}
f_R(x,y) &\coloneqq - \sum_{i = 0}^{e - 1}
        \left(
            \sum_{j = 0}^{e - i - 1} (a_i x^{p^i} y)^{p^j} + (xR(y))^{p^i}
                \right) \in \mathbb{F}_q[x, y], \\
V_R &\coloneqq \Ker E_R = \{x \in \mathbb{F} \mid E_R(x)=0\}. \notag
\end{align}
Since $E_R(x)$ is a separable $\mathbb{F}_p$-linearized polynomial of degree $p^{2e}$,
$V_R$ is an $\mathbb{F}_p$-vector space of dimension $2e$.
By \cite[(2.1)]{TT}, we have the following equality:
\begin{equation}
\label{11}
f_R(x,y)^p-f_R(x,y) = -x^{p^e} E_R(y)+x R(y)+y R(x). 
\end{equation}

We define a \emph{Heisenberg group} $H_R$ by
\[
H_R \coloneqq \{(\alpha,\beta) \in \mathbb{F}^2 \mid \alpha\in V_R,\ 
\beta^p-\beta=\alpha R(\alpha)\},
\]
whose group operation is given by
\begin{equation}\label{op}
(\alpha,\beta) \cdot (\alpha',\beta')=
(\alpha+\alpha', \beta+\beta'+f_R(\alpha,\alpha')).
\end{equation}
The center of $H_R$ equals $\{0\} \times 
\mathbb{F}_p$, which we identify with 
$\mathbb{F}_p$ via the projection
$(0,\beta) \mapsto \beta$. 

Let $A \subset H_R$ be a maximal abelian subgroup.
Let 
 \[
 \pi \colon H_R \to V_R;\ 
 (\alpha,\beta) \mapsto \alpha
 \]
be the projection onto the first factor.
The map $\pi$ induces an isomorphism $H_R/\mathbb{F}_p \xrightarrow{\sim} V_R$. 
 By \cite[Lemma 2.5]{TT}, there exists a monic $\mathbb{F}_p$-linearized polynomial
\[
F_A(x)=\sum_{i=0}^e b_i x^{p^i} \in \mathbb{F}_q[x]
\quad
(b_e = 1)
\]
 such that 
 $A=\pi^{-1}(\overline{A})$, 
 where we set  
\[ \overline{A} \coloneqq \Ker F_A=\{x \in \mathbb{F} \mid 
 F_A(x)=0\}. \]
Note that $\overline{A}$ is an $\mathbb{F}_p$-vector space of dimension $e$.

 In the following, we assume
 $p_0 \neq 2$, $e \geq 1$, and $A\subset \mathbb{F}_q^2$.
Then the assumption $A \subset \mathbb{F}_q^2$
is satisfied if $\mathbb{F}_q$ is a splitting field of $F_A(x)$; see \cite[Lemma 2.6]{TT}. 

 We put 
 \begin{equation}\label{cc1}
 c_A \coloneqq (-1)^{e} \frac{a_e}{2} \prod_{\alpha
 \in \overline{A} \setminus \{0\}} \alpha^{-1}
 = (-1)^e \frac{a_e}{2 b_0} \quad \in \mathbb{F}_q. 
 \end{equation}

Let $a(x) \in \mathbb{F}_q[x]$ be a unique $\mathbb{F}_p$-linearized polynomial satisfying
 \begin{equation}\label{af}
  a(F_A(x)) = F_A(a(x)) = x^q-x. 
  \end{equation} 
We consider the polynomial $a(x)$ as the map
\[
  a \colon \mathbb{F}_q \to
  \overline{A};\ t \mapsto a(t).
\]
By \cite[Remark 2.9]{TT},
we embed $\overline{A}$ into $A$
by the homomorphism
 \[
\phi_{\overline{A}} \colon 
\overline{A} \to A;\ 
x \mapsto (x,f_R(x,x)/2).
 \]

 \begin{remark}
 There is a sign error in 
 \cite[Lemma A.6]{TT}. 
 The correct formula for the constant $c_A$ is
 $(-1)^{e} (a_e b_e)/(2b_0)$,
 not 
 $(-1)^{e+1} (a_e b_e)/(2b_0)$. 
\end{remark}

\subsection{$L$-polynomials of van der Geer--van der Vlugt curves}

For a finite abelian group $G$,
the character group with values in $\overline{\mathbb{Q}}_{\ell}^{\times}$ is written as
\[
G^{\vee} \coloneqq \Hom_{\mathbb{Z}}(G,\overline{\mathbb{Q}}_{\ell}^{\times}).
\]

For a non-trivial character
$\psi \in \mathbb{F}_p^{\vee} \setminus \{1\}$,
we put
\[
 \psi_q \coloneqq \psi \circ 
 \Tr_{q/p} \colon \mathbb{F}_q \to \overline{\mathbb{Q}}_{\ell}^{\times}
\]
and
\[
 A_{\psi}^{\vee} \coloneqq \{\xi \in A^{\vee} \mid \xi|_{\mathbb{F}_p}=\psi\}.
\]
For a character $\xi \in 
 A_{\psi}^{\vee}$,
we define a character $\xi' \in \mathbb{F}_q^{\vee}$ by
 \begin{equation}\label{xi}
 \xi' \coloneqq \xi \circ \phi_{\overline{A}} \circ a \colon
 \mathbb{F}_q \overset{a}{\longrightarrow} \overline{A} \overset{\phi_{\overline{A}}}{\longrightarrow} A \overset{\xi}{\longrightarrow} \overline{\mathbb{Q}}_{\ell}^{\times}.
 \end{equation}
Then there exists a unique element
 $\eta \in \mathbb{F}_q^{\times}$
 such that 
$\xi'(x) = \psi_q(\eta x)$
 for every $x \in \mathbb{F}_q$.

 We define the \emph{quadratic Gauss sum} associated with $\psi_q \in \mathbb{F}_q^{\vee}$ by
 \[
 G(\psi_q) \coloneqq -\sum_{x \in \mathbb{F}_q}
 \psi_q(x^2). 
 \]
Recall that $q = p_0^{f_0}$.
It is well-known that
\begin{equation}\label{gauss sum square}
G(\psi_q)^2 = (-1)^{f_0} q = \left(\frac{-1}{q}\right) q
= \begin{cases}
q & (q \equiv 1 \pmod{4}), \\
-q & (q \equiv 3 \pmod{4}).
\end{cases}
\end{equation}
(See \cite[Theorem 5.12 (iv) and Remark 5.13]{Lidl-Niederreiter}.)
In particular,
the value of $G(\psi_q)^2$ does not depend on the choice of $\psi_q$.

The \emph{$L$-polynomial}
of the van der Geer--van der Vlugt curve $\overline{C}_R$ over $\mathbb{F}_q$
is defined by
  \[
 L_{\overline{C}_R/\mathbb{F}_q}(T)\coloneqq
 \det(1-\mathrm{Fr}_q^\ast T; 
 H^1(\overline{C}_R)).
 \]
  
 The following formula was proved by Takeuchi and the third author.
 
 \begin{theorem}[{\cite[Theorem 1.1 and Corollary A.8]{TT}}]
 \label{tt}
 We assume $p_0 \neq 2$, $e \geq 1$, and 
 $A \subset \mathbb{F}_q^2$.
\begin{itemize}
\item[{\rm (1)}] 
 We have the equality 
 \[
 L_{\overline{C}_R/\mathbb{F}_q}(T)
 =\prod_{\psi \in \mathbb{F}_p^{\vee} \setminus \{1\}}
 \prod_{\xi \in A_{\psi}^{\vee}}(1-\tau_{\xi}T),  
 \] 
 where 
 \[
 \tau_{\xi} =
 \psi_q(- 4^{-1} c_A^{-1} \eta^2)
 \left(\frac{c_A}{q}\right) G(\psi_q) \quad 
 \textrm{for $\psi \in \mathbb{F}_p^{\vee} \setminus \{1\}$ and $\xi \in A_{\psi}^{\vee}$}. 
 \]

\item[{\rm (2)}] 
Let $k \geq 1$ be a positive integer.
The curve $\overline{C}_R$ is
$\mathbb{F}_{q^k}$-maximal (resp.\ $\mathbb{F}_{q^k}$-minimal)
if and only if
$\tau_{\xi}^k = - q^{k/2}$
(resp.\ $\tau_{\xi}^k = q^{k/2}$)
for every $\psi \in \mathbb{F}_p^{\vee} \setminus \{1\}$ and
every $\xi \in A_{\psi}^{\vee}$. 
\end{itemize}
\end{theorem}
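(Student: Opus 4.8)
The statement to prove is Theorem \ref{tt}, which gives an explicit product formula for the $L$-polynomial of the van der Geer--van der Vlugt curve together with a maximality/minimality criterion. Here is how I would organize a proof, building on the setup in Section \ref{Explicit Formula}.

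\medskip

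\noindent\textbf{Strategy.} The plan is to realize $H^1(\overline{C}_R)$ as an induced (or better, isotypic) representation coming from the Artin--Schreier structure, decompose it under the action of the Heisenberg group $H_R$, and then identify each one-dimensional piece with a Gauss-sum-twisted additive character sum that one can evaluate in closed form. Concretely, the affine curve $C_R$ carries an action of the additive group $\mathbb{F}_p$ (via $y \mapsto y + t$) making $\overline{C}_R \to \mathbb{P}^1$ an Artin--Schreier cover; pushing forward $\overline{\mathbb{Q}}_\ell$ and decomposing along characters $\psi \in \mathbb{F}_p^\vee$ gives $H^1_{\mathrm c}(C_R) = \bigoplus_{\psi \neq 1} H^1_{\mathrm c}(\mathbb{A}^1, \mathscr{L}_{\psi(xR(x))})$, where $\mathscr{L}_{\psi(xR(x))}$ is the Artin--Schreier sheaf attached to the polynomial $xR(x)$. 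Because $xR(x)$ is (up to the additive polynomial $R$) a quadratic form in disguise, the key geometric input — already encoded in \eqref{11} and the construction of $H_R$, $A$, $F_A$ — is that this Artin--Schreier sheaf becomes, after a suitable linear substitution, a product of a quadratic Artin--Schreier sheaf (contributing a Gauss sum) and a translation-type character. This is precisely the content one imports from \cite[Theorem 1.1]{TT}, so in practice the bulk of part (1) is citation plus bookkeeping: unwinding the definitions of $c_A$ in \eqref{cc1}, the map $a(x)$ in \eqref{af}, the character $\xi'$ and the element $\eta$ in \eqref{xi}, and matching them against the normalization used in \cite{TT}. The one genuinely new wrinkle here relative to \cite{TT} is the corrected sign of $c_A$ flagged in the Remark after \eqref{af}; I would carry that correction carefully through the Gauss-sum evaluation, since $\left(\frac{c_A}{q}\right)$ and the quadratic twist $\psi_q(-4^{-1}c_A^{-1}\eta^2)$ both depend on it.

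\medskip

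\noindent\textbf{Key steps for (1).} First, recall from \cite{TT} the identification of eigenvalues: each pair $(\psi, \xi)$ with $\psi \in \mathbb{F}_p^\vee \setminus \{1\}$ and $\xi \in A_\psi^\vee$ indexes a one-dimensional $\mathrm{Fr}_q^\ast$-stable subspace of $H^1(\overline{C}_R)$, and the Lefschetz trace formula reduces the eigenvalue $\tau_\xi$ to an exponential sum of the shape $-\sum_{x \in \mathbb{F}_q} \psi_q(\text{quadratic in } x)$ twisted by the character data of $\xi$. Second, diagonalize that quadratic: writing the quadratic part with leading coefficient governed by $c_A$, complete the square to split off a pure Gauss sum $G(\psi_q)$, the Legendre-symbol factor $\left(\frac{c_A}{q}\right)$ coming from the change of variable $x \mapsto c_A^{1/2}x$ (well-defined up to sign, which cancels), and the remaining linear term, which after completing the square becomes the constant $\psi_q(-4^{-1}c_A^{-1}\eta^2)$ once $\eta$ is identified via \eqref{xi}. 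Third, assemble: running $\xi$ over $A_\psi^\vee$ and $\psi$ over $\mathbb{F}_p^\vee \setminus \{1\}$ gives $2e \cdot (p-1) \cdot \ldots$ — more precisely $\deg L = 2g(\overline{C}_R) = p^e(p-1)$ factors, which matches $|A_\psi^\vee| = |\overline{A}| = p^e$ times $(p-1)$ choices of $\psi$ — and the product formula drops out. I would also remark that $|\tau_\xi|^2 = |G(\psi_q)|^2 = q$ by \eqref{gauss sum square}, confirming the Riemann hypothesis for these curves and, in passing, their supersingularity (each $\tau_\xi/q^{1/2}$ is a root of unity since $\psi_q$-values are roots of unity and $G(\psi_q)^2 = \pm q$).

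\medskip

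\noindent\textbf{Key steps for (2), and the main obstacle.} Part (2) is then a formal consequence of part (1) together with Lemma \ref{maximal minimal}(2): $\overline{C}_R$ is $\mathbb{F}_{q^k}$-maximal (resp.\ minimal) iff every eigenvalue of $(\mathrm{Fr}_q^\ast)^k = \mathrm{Fr}_{q^k}^\ast$ on $H^1$ equals $-q^{k/2}$ (resp.\ $q^{k/2}$), and the eigenvalues of $\mathrm{Fr}_{q^k}^\ast$ are exactly the $\tau_\xi^k$. So the equivalence is immediate once one knows the $\tau_\xi$ are all the eigenvalues with the right multiplicities, which is part (1). I would just note the mild subtlety that $q^{1/2}$ must be interpreted consistently (the two square roots correspond to swapping "maximal" and "minimal"), but since the statement quantifies over a fixed choice this causes no trouble. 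The main obstacle — really the only place where care rather than routine is needed — is Step two of part (1): getting the Gauss-sum normalization, the sign in $c_A$, and the precise form of the quadratic-twist factor $\psi_q(-4^{-1}c_A^{-1}\eta^2)$ all mutually consistent, since \cite{TT} states the formula with a sign that the present paper is correcting. I would pin this down by testing against a small explicit case (e.g. $R(x) = 2x^p + x$, $e = 1$, where $\overline{C}_R$ and its point count are classically known) before trusting the general manipulation.
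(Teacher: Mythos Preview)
Your proposal is correct and follows essentially the same approach as the paper: for (1) you rely on the results of \cite{TT} (the product decomposition of the $L$-polynomial from \cite[Corollary 3.6]{TT} and the Gauss-sum evaluation of $\tau_\xi$ from \cite[Corollary A.8]{TT}, with the sign correction in $c_A$), and for (2) you combine (1) with Lemma \ref{maximal minimal}(2). The paper's own proof is nothing more than this citation-and-combination; your write-up simply unpacks what \cite{TT} does under the hood (Artin--Schreier decomposition, Heisenberg representation theory, completing the square), which is helpful exposition but not a different argument.
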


\begin{proof}
(1) In \cite[Definition 3.5]{TT},
the number $\tau_{\xi} \in \overline{\mathbb{Q}}_{\ell}^{\times}$ is defined as the eigenvalue of $\mathrm{Fr}_q^\ast$ acting on the $\ell$-adic cohomology with compact support
$H^1_{\text{\rm c}}(\mathbb{A}^1, \mathscr{Q}_{\xi})$.
By \cite[Corollary 3.6]{TT},
the $L$-polynomial $L_{\overline{C}_R/\mathbb{F}_q}(T)$
is equal to the product of $1-\tau_{\xi}T$.
When $p_0 \neq 2$, an explicit formula of $\tau_{\xi}$ in terms of the quadratic Gauss sum is obtained in \cite[Corollary A.8]{TT}.
Combining these results, we get (1). 

(2) The claim follows from (1) and Lemma \ref{maximal minimal}.
\end{proof}

\begin{remark}
There is a slight change of notation.
In \cite[(4.10)]{TT}, we put
$G_{\psi_q} \coloneqq \sum_{x \in \mathbb{F}_q}
 \psi_q(x^2)$.
Hence we have $G(\psi_q)=-G_{\psi_q}$.
It eliminates the sign in the formula of $\tau_{\xi}$;
compare with \cite[Corollary A.8]{TT}.
\end{remark}
 
\section{Criteria for maximality and minimality}
\label{Criteria}

In this section, we introduce a condition (Condition \ref{ast})
originally introduced by the second author \cite{Ta}.
We prove criteria for the 
van der Geer--van der Vlugt curves
satisfying Condition \ref{ast}
to be maximal or minimal.

\subsection{Calculation of quadratic Gauss sums}

We use the same notation as in Section \ref{Explicit Formula}.
In particular, we assume $p_0 \neq 2$ and $e \geq 1$,
and fix a maximal abelian subgroup $A \subset H_R$ such that
$A\subset \mathbb{F}_q^2$.
We have $\overline{A} = \pi(A) \subset \mathbb{F}_q$.
We put $q = p^n$.

\begin{lemma}\label{ttl}
 We have 
 \[
  \left(\frac{c_A}{q}\right)=
  \left(\frac{2 a_e}{q}\right). 
 \]
\end{lemma}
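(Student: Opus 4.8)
The plan is to unwind the definition of $c_A$ in \eqref{cc1} and compare quadratic residue symbols. Recall $c_A = (-1)^e \frac{a_e}{2 b_0}$, where $b_0$ is the constant term of the $\mathbb{F}_p$-linearized polynomial $F_A(x) = \sum_{i=0}^e b_i x^{p^i}$ with $b_e = 1$. Since $\left(\frac{\cdot}{q}\right)$ is multiplicative on $\mathbb{F}_q^\times$ and $\left(\frac{2^{-1}}{q}\right) = \left(\frac{2}{q}\right)$, we have
\[
\left(\frac{c_A}{q}\right) = \left(\frac{(-1)^e}{q}\right)\left(\frac{2 a_e}{q}\right)\left(\frac{b_0}{q}\right).
\]
So it suffices to show that $\left(\frac{(-1)^e}{q}\right)\left(\frac{b_0}{q}\right) = 1$, i.e.\ that $(-1)^e b_0$ is a square in $\mathbb{F}_q^\times$.

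To see this I would use the structure of $\overline{A} = \Ker F_A \subset \mathbb{F}_q$. Since $F_A(x) = \prod_{\alpha \in \overline{A}} (x - \alpha) = x \prod_{\alpha \in \overline{A}\setminus\{0\}}(x-\alpha)$ (it is separable of degree $p^e = |\overline{A}|$ and monic), comparing the coefficient of $x$ on both sides gives $b_0 = \prod_{\alpha \in \overline{A}\setminus\{0\}}(-\alpha) = (-1)^{p^e - 1} \prod_{\alpha \in \overline{A}\setminus\{0\}} \alpha$. Since $p_0 \neq 2$, $p^e$ is odd, so $(-1)^{p^e-1} = 1$ and $b_0 = \prod_{\alpha \in \overline{A}\setminus\{0\}} \alpha$. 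Now $\overline{A}$ is an $\mathbb{F}_p$-vector space of dimension $e$, and the nonzero elements pair up $\alpha \leftrightarrow -\alpha$ (distinct since $p_0 \neq 2$); grouping the product into these pairs gives $b_0 = \prod_{\text{pairs}} (-\alpha^2) = (-1)^{(p^e-1)/2} \left(\prod \alpha\right)^2$ over a set of representatives. Hence $b_0 = (-1)^{(p^e-1)/2} \cdot (\text{square})$, so $\left(\frac{b_0}{q}\right) = \left(\frac{(-1)^{(p^e-1)/2}}{q}\right) = \left(\frac{-1}{q}\right)^{(p^e-1)/2}$.

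It then remains to check $\left(\frac{(-1)^e}{q}\right) = \left(\frac{-1}{q}\right)^{(p^e-1)/2}$, equivalently $\left(\frac{-1}{q}\right)^{e} = \left(\frac{-1}{q}\right)^{(p^e-1)/2}$, i.e.\ that $e \equiv \tfrac{p^e-1}{2} \pmod 2$ whenever $-1$ is a non-square in $\mathbb{F}_q$ (and no constraint otherwise). When $-1$ is a non-square we have $q \equiv 3 \pmod 4$, so $p$ is odd with $p \equiv 3 \pmod 4$ and $n$ odd; then $p^e \equiv 3^e \pmod 8$-type reasoning gives $\tfrac{p^e-1}{2} \equiv e \pmod 2$ (since $p \equiv 3 \pmod 4$ forces $\tfrac{p-1}{2}$ odd, and $\tfrac{p^e-1}{2} = \tfrac{p-1}{2}(1 + p + \cdots + p^{e-1})$ with the second factor $\equiv e \pmod 2$). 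This closes the argument.

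The main obstacle I anticipate is getting the parity bookkeeping exactly right: tracking $(-1)^e$ versus $(-1)^{(p^e-1)/2}$ and being careful that the pairing $\alpha \leftrightarrow -\alpha$ partitions $\overline{A}\setminus\{0\}$ into exactly $(p^e-1)/2$ two-element blocks. A cleaner alternative, which I would try first to avoid case analysis, is to argue directly that $(-1)^e b_0 = (-1)^e \prod_{\alpha \in \overline{A}\setminus\{0\}}\alpha$ is a norm from a suitable extension, or simply to note that $\prod_{\alpha\in\overline{A}\setminus\{0\}}\alpha$ times $(-1)^e$ equals $\prod$(over pair-representatives)$(-\alpha^2)\cdot(-1)^e = (-1)^{e + (p^e-1)/2}(\prod\alpha)^2$, and that $e + (p^e-1)/2$ is always even when $q\equiv 3\pmod 4$ — the only case where the sign matters — reducing everything to the single congruence above.
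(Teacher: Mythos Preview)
Your proof is correct and follows essentially the same approach as the paper. Both arguments pair $\alpha \leftrightarrow -\alpha$ in $\overline{A}\setminus\{0\}$ to reduce the claim to $\left(\frac{-1}{q}\right)^{e + (p^e-1)/2} = 1$, and then dispose of this by a parity check: trivial when $p \equiv 1 \pmod 4$ (since then $\left(\frac{-1}{q}\right)=1$), and when $p \equiv 3 \pmod 4$ one verifies $e + (p^e-1)/2$ is even via $\tfrac{p^e-1}{2} = \tfrac{p-1}{2}(1+p+\cdots+p^{e-1}) \equiv e \pmod 2$. The only cosmetic differences are that the paper works directly from the product form $c_A = (-1)^e \tfrac{a_e}{2}\prod_{\alpha\neq 0}\alpha^{-1}$ rather than detouring through $b_0$, and splits cases on $p \pmod 4$ rather than $q \pmod 4$; your final ``cleaner alternative'' paragraph is in fact exactly the paper's one-line computation.
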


\begin{proof} 
 If $\alpha \in \overline{A}$, we have 
 $-\alpha \in \overline{A}$.
 Since $\overline{A}$ is an $\mathbb{F}_p$-vector space of dimension $e$, we obtain 
 \[
 \left(\frac{c_A}{q}\right)=
 \left(\frac{-1}{q}\right)^{e + (p^e-1)/2} \cdot \left(\frac{2 a_e}{q}\right)
 \]
 by \eqref{cc1}. 
 If $p \equiv 1 \pmod 4$, 
we have $\displaystyle \left(\frac{-1}{q}\right)=1$
and the claim follows.
 If $p \equiv 3 \pmod 4$, 
 then $e + (p^e-1)/2$ is even regardless of the parity of $e$ and the claim follows.
\end{proof}

\begin{corollary}\label{tct}
Let the notation and the assumption be 
as in Theorem \ref{tt}. 
 We have 
 \[
 \tau_{\xi}=
 \psi_q(-4^{-1} c_A^{-1} \eta^2)
 \left(\frac{2 a_e}{q}\right) G(\psi_q). 
 \]
\end{corollary}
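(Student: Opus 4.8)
The final statement to prove is Corollary~\ref{tct}, which replaces the factor $\left(\frac{c_A}{q}\right)$ appearing in the formula for $\tau_\xi$ in Theorem~\ref{tt} by $\left(\frac{2a_e}{q}\right)$. The plan is simply to substitute the identity just established in Lemma~\ref{ttl} into the formula of Theorem~\ref{tt}~(1).

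First I would recall from Theorem~\ref{tt}~(1) that, under the standing assumptions $p_0 \neq 2$, $e \geq 1$, and $A \subset \mathbb{F}_q^2$, we have
\[
\tau_{\xi} = \psi_q(-4^{-1} c_A^{-1} \eta^2) \left(\frac{c_A}{q}\right) G(\psi_q)
\]
for every $\psi \in \mathbb{F}_p^{\vee} \setminus \{1\}$ and every $\xi \in A_\psi^\vee$. Next I would invoke Lemma~\ref{ttl}, which gives $\left(\frac{c_A}{q}\right) = \left(\frac{2a_e}{q}\right)$. Substituting this equality into the displayed formula immediately yields
\[
\tau_{\xi} = \psi_q(-4^{-1} c_A^{-1} \eta^2) \left(\frac{2a_e}{q}\right) G(\psi_q),
\]
which is exactly the claimed expression. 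The quantities $\psi_q$, $\eta$, $c_A$, and $G(\psi_q)$ are unchanged; only the Legendre-symbol factor has been rewritten.

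There is essentially no obstacle here: the corollary is a one-line consequence of combining two results already in hand, and requires no new computation. The only point worth noting is that the hypotheses of Lemma~\ref{ttl} (namely $p_0 \neq 2$, $e \geq 1$, and the existence of the maximal abelian subgroup $A \subset H_R$ with $A \subset \mathbb{F}_q^2$) coincide with the standing assumptions of this subsection and with those of Theorem~\ref{tt}, so the substitution is legitimate. Hence the claim follows.
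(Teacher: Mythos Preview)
Your proof is correct and matches the paper's own argument: both simply substitute the identity $\left(\frac{c_A}{q}\right)=\left(\frac{2a_e}{q}\right)$ from Lemma~\ref{ttl} into the formula for $\tau_\xi$ from Theorem~\ref{tt}. (In fact you cite Theorem~\ref{tt}~(1), which is the part actually containing the formula, whereas the paper's one-line proof cites part~(2); your reference is the more accurate one.)
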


\begin{proof}
This is a direct consequence of Theorem \ref{tt} (2) and Lemma \ref{ttl}. 
\end{proof}

Recall that $q=p_0^{f_0}$ and $\sqrt{q} = p_0^{f_0/2}$.

\begin{lemma}\label{HG}
Assume that $f_0$ is even. Then we have
\[
G(\psi_q)=(-1)^{f_0 (p_0-1)/4} \sqrt{q}. 
\]
\end{lemma}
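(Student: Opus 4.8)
The statement to prove is Lemma \ref{HG}: if $f_0$ is even, then $G(\psi_q) = (-1)^{f_0(p_0-1)/4}\sqrt{q}$, where $q = p_0^{f_0}$. The plan is to reduce this to the classical evaluation of the quadratic Gauss sum over $\mathbb{F}_{p_0}$ together with the Davenport--Hasse relation, which expresses the Gauss sum over $\mathbb{F}_{p_0^{f_0}}$ in terms of the one over $\mathbb{F}_{p_0}$. First I would recall that for the quadratic character $\chi$ on $\mathbb{F}_{p_0^m}^{\times}$ obtained by restricting $\left(\frac{\cdot}{p_0}\right)$ via the norm, the Davenport--Hasse theorem gives $G(\psi_{p_0^m}) = (-1)^{m-1} G(\psi_{p_0})^m$ (with the sign conventions of the paper, where $G(\psi_q) = -\sum_x \psi_q(x^2)$). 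Setting $m = f_0$, this yields $G(\psi_q) = (-1)^{f_0-1} G(\psi_{p_0})^{f_0}$.

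Next I would invoke the classical Gauss evaluation: $G(\psi_{p_0})^2 = \left(\frac{-1}{p_0}\right) p_0 = (-1)^{(p_0-1)/2} p_0$ (this is the $f_0 = 1$ case of \eqref{gauss sum square}). Since $f_0$ is even, write $f_0 = 2f_0'$, so that $G(\psi_{p_0})^{f_0} = \left(G(\psi_{p_0})^2\right)^{f_0'} = \left((-1)^{(p_0-1)/2} p_0\right)^{f_0'} = (-1)^{f_0'(p_0-1)/2}\, p_0^{f_0'} = (-1)^{f_0(p_0-1)/4}\sqrt{q}$, using $\sqrt{q} = p_0^{f_0/2} = p_0^{f_0'}$. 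Combining with the Davenport--Hasse formula and noting that $f_0$ even makes $(-1)^{f_0-1} = -1$... here I need to be careful about the sign: I expect the paper's sign convention (the minus sign built into $G$) is precisely arranged so that the unwanted $(-1)^{f_0-1}$ either cancels or is absorbed; I would double-check whether the correct statement of Davenport--Hasse under this convention is $G(\psi_{p_0^m}) = (-1)^{m-1}G(\psi_{p_0})^m$ or rather $G(\psi_{p_0^m}) = G(\psi_{p_0})^m$ without the extra sign, since with the normalization $-\sum\psi(x^2)$ the lift is multiplicative on the nose for quadratic characters.

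\textbf{Main obstacle.} The delicate point is getting every sign exactly right: the paper uses the nonstandard normalization $G(\psi_q) = -\sum_{x}\psi_q(x^2)$ (flagged in the Remark after Theorem \ref{tt}), and the Davenport--Hasse lifting relation is usually stated for the Gauss sum $g(\chi) = \sum_{x}\chi(x)\psi(x)$ over $\mathbb{F}_{p_0^m}^{\times}$ with $g(\chi_m) = (-1)^{m-1} g(\chi)^m$; translating between $\sum_x \psi(x^2)$ and $\sum_{x\neq 0}\chi(x)\psi(x)$ introduces an extra $1 = \sum_{x}\psi(0^2)$ term, and one must track how the three sign sources (the defining minus sign, the $(-1)^{m-1}$ in Davenport--Hasse, and $\left(\frac{-1}{p_0}\right) = (-1)^{(p_0-1)/2}$) combine. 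Once the bookkeeping is done with $m = f_0$ even, the exponent $(-1)^{f_0(p_0-1)/4}$ should emerge cleanly, since $f_0/2$ is an integer and the $(-1)^{(p_0-1)/2}$ gets raised to the power $f_0/2$. Alternatively, one could bypass Davenport--Hasse entirely and argue directly: by \eqref{gauss sum square}, $G(\psi_q) = \pm\sqrt{q}$ when $f_0$ is even (as $(-1)^{f_0} q = q$ is a perfect square), so it suffices to determine the sign, which can be pinned down by a Stickelberger-type congruence or by the stability of the sign under the tower $\mathbb{F}_{p_0} \subset \mathbb{F}_{p_0^2} \subset \cdots$; I would present whichever route keeps the sign analysis shortest.
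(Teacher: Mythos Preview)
Your approach is correct and essentially the same as the paper's: Hasse--Davenport plus the known value of the square of the Gauss sum. Your sign anxiety is warranted but resolves exactly as you suspect: with the paper's normalization $G(\psi_q) = -\sum_x \psi_q(x^2) = -g(\chi_2,\psi_q)$ (where $\chi_2$ is the quadratic character), the Hasse--Davenport relation reads $G(\psi_{p_0^m}) = G(\psi_{p_0})^m$ with no extra $(-1)^{m-1}$, so your computation $G(\psi_{p_0})^{f_0} = \bigl(G(\psi_{p_0})^2\bigr)^{f_0/2} = (-1)^{f_0(p_0-1)/4}\sqrt{q}$ goes through cleanly.

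The paper's execution is slightly slicker and sidesteps the sign bookkeeping entirely: rather than lifting from $\mathbb{F}_{p_0}$ to $\mathbb{F}_q$ in one degree-$f_0$ jump, it applies Hasse--Davenport only for the quadratic extension $\mathbb{F}_{\sqrt{q}} \subset \mathbb{F}_q$, giving $G(\psi_q) = G(\psi_{\sqrt{q}})^2$, and then plugs in \eqref{gauss sum square} for the field $\mathbb{F}_{\sqrt{q}}$ directly. This way the only input is the formula for $G^2$ already stated in the paper, and the sign in Hasse--Davenport never needs to be unpacked.
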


\begin{proof}
By \eqref{gauss sum square} and the Hasse--Davenport relation
\cite{Hasse-Dapenport} (see also \cite[Th\'eor\`eme 1.15]{SommesTrig}, \cite[Chapter 11, \S 3, Theorem 1]{Ireland-Rosen}, \cite[Theorem 5.14]{Lidl-Niederreiter}),
we calculate
\begin{align*}
  G(\psi_q) &= G(\psi_{\sqrt{q}})^2=
\left(\frac{-1}{\sqrt{q}}\right)
\sqrt{q}=
\left(\frac{-1}{p_0}\right)^{f_0/2}
\sqrt{q}= (-1)^{f_0 (p_0-1)/4} \sqrt{q}. 
\end{align*}
\end{proof}

Recall that $q = p^n$. 

\begin{lemma}\label{cpql}
Let $k$ be an odd integer.
The value of $G(\psi_q)^k$ is independent of
the choice of a non-trivial character $\psi \in 
\mathbb{F}_p^{\vee} \setminus \{1\}$ if and only if 
$n$ is even. 
\end{lemma}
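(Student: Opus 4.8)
The statement compares the value $G(\psi_q)^k$ across non-trivial characters $\psi \in \mathbb{F}_p^{\vee} \setminus \{1\}$, for $k$ odd. The plan is to describe precisely how $G(\psi_q)$ depends on $\psi$, and then see when raising to the $k$-th power kills that dependence. First I would recall that any two non-trivial additive characters of $\mathbb{F}_p$ are related by $\psi'(x) = \psi(cx)$ for a unique $c \in \mathbb{F}_p^{\times}$, so that $\psi'_q(x) = \psi_q(cx)$ for all $x \in \mathbb{F}_q$ (since $c \in \mathbb{F}_p$ commutes with the trace). Then a change of variables $x \mapsto c^{-1/2} x$ in the defining sum — more carefully, using that $x \mapsto cx^2$ runs over the same multiset of values scaled appropriately — gives the standard relation $G(\psi'_q) = \left(\frac{c}{q}\right) G(\psi_q)$, where $\left(\frac{c}{q}\right) = c^{(q-1)/2} \in \{\pm 1\}$ is the quadratic character of $c$ viewed in $\mathbb{F}_q^{\times}$. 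Thus the only ambiguity in $G(\psi_q)$ as $\psi$ varies is a sign, namely $\left(\frac{c}{q}\right)$ for $c \in \mathbb{F}_p^{\times}$.

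\textbf{Reduction to a sign computation.} With the relation $G(\psi'_q) = \left(\frac{c}{q}\right) G(\psi_q)$ in hand, for $k$ odd we get $G(\psi'_q)^k = \left(\frac{c}{q}\right)^k G(\psi_q)^k = \left(\frac{c}{q}\right) G(\psi_q)^k$ (since $\left(\frac{c}{q}\right) = \pm 1$ and $k$ is odd). Hence $G(\psi_q)^k$ is independent of the choice of $\psi$ if and only if $\left(\frac{c}{q}\right) = 1$ for every $c \in \mathbb{F}_p^{\times}$, i.e. if and only if the quadratic character of $\mathbb{F}_q^{\times}$ restricted to $\mathbb{F}_p^{\times}$ is trivial. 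So the whole statement comes down to: the restriction to $\mathbb{F}_p^{\times}$ of the quadratic character of $\mathbb{F}_q^{\times}$ is trivial $\iff$ $n$ is even (recall $q = p^n$).

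\textbf{The restriction computation.} For $c \in \mathbb{F}_p^{\times}$ one has $\left(\frac{c}{q}\right) = c^{(q-1)/2} = c^{(p-1)/2 \cdot (1 + p + \cdots + p^{n-1})}$. Since $c^{p-1} = 1$, this equals $\left(c^{(p-1)/2}\right)^{N}$ where $N = 1 + p + \cdots + p^{n-1} \bmod 2$; and $c^{(p-1)/2} = \left(\frac{c}{p}\right)$, the Legendre symbol. Because $p$ is odd, $N \equiv n \pmod 2$, so $\left(\frac{c}{q}\right) = \left(\frac{c}{p}\right)^n$. If $n$ is even this is $1$ for all $c$; if $n$ is odd it equals $\left(\frac{c}{p}\right)$, which is $-1$ for any non-square $c \in \mathbb{F}_p^{\times}$ (such $c$ exists as $p$ is odd). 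This establishes the equivalence. I anticipate the only place requiring care is the justification of $G(\psi'_q) = \left(\frac{c}{q}\right)G(\psi_q)$: one should phrase it cleanly, e.g. $G(\psi'_q) = -\sum_{x} \psi_q(cx^2)$, and note that $x \mapsto x^2$ has the same fibers regardless of $c$ while multiplication by $c$ permutes squares among themselves exactly when $c$ is a square in $\mathbb{F}_q^{\times}$ — this is the one step where a short standard argument (rather than a citation) makes the proof self-contained, though one can also just cite the quoted properties of Gauss sums in \cite{Lidl-Niederreiter}.
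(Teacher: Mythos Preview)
Your proposal is correct and follows essentially the same approach as the paper: both relate two non-trivial characters via $\psi'(x)=\psi(\lambda x)$, deduce $G(\psi'_q)=\left(\frac{\lambda}{q}\right)G(\psi_q)$, and reduce (using $k$ odd) to the statement that $\left(\frac{\lambda}{q}\right)=1$ for all $\lambda\in\mathbb{F}_p^{\times}$ iff $n$ is even. Your endgame is slightly more streamlined---you derive the unified identity $\left(\frac{c}{q}\right)=\left(\frac{c}{p}\right)^n$---whereas the paper treats $n$ even and $n$ odd separately; one small terminological nit is that $p$ here is a prime power, so $\left(\frac{c}{p}\right)=c^{(p-1)/2}$ is the quadratic character of $\mathbb{F}_p^{\times}$ rather than a Legendre symbol per se, but this does not affect the argument.
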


\begin{proof}
We fix $\psi \in \mathbb{F}_p^{\vee} \setminus \{1\}$. 
For any $\psi' \in \mathbb{F}_p^{\vee}$,  
there exists an element $\la \in \mathbb{F}_p^{\times}$
such that $\psi'(x)=\psi(\la x)$
for every $x \in \mathbb{F}_p$.
This implies 
$\psi'_q(x)=\psi_q(\la x)$ for every $x \in \mathbb{F}_q$.
Hence $\displaystyle G(\psi'_q)= \left( \frac{\la}{q} \right)
G(\psi_q)$.
Since $k$ is odd, we have
\[
  \textrm{$G(\psi_q)^k$ is independent of $\psi \in 
\mathbb{F}_p^{\vee} \setminus \{1\}$}
  \iff
  \textrm{$\left(\frac{\la}{q}\right)=1$ for every $\la \in \mathbb{F}_p^{\times}$}. 
\]
If $n$ is even, we put $n = 2m$.
Then we calculate 
\[
\displaystyle \left( \frac{\la}{q} \right)
= \lambda^{(q-1)/2}
= \lambda^{(p^m-1)(p^m+1)/2}
= (\lambda^{p^m-1})^{(p^m+1)/2}
= 1 
\]
for any $\la \in \mathbb{F}_p^{\times}$. 

To the contrary, assume that
$\displaystyle \left( \frac{\la}{q} \right) = 1$ for every 
$\la \in \mathbb{F}_p^{\times}$. 
If $\la_0$ is a generator of the cyclic group $\mathbb{F}_p^{\times}$, 
we calculate 
\[
1 = \left(\frac{\la_0}{q}\right)
  = \la_0^{(q-1)/2}
  = \la_0^{(p^n-1)/2}
  = \left(\la_0^{(p-1)/2}\right)^{p^{n-1} + \cdots + 1}
  = (-1)^n. 
\]
Thus $n$ is even.
\end{proof}

\subsection{Maximality and minimality over $\mathbb{F}_{q^{p_0}}$}

Combining above results,
we determine when
certain van der Geer--van der Vlugt curves are
$\mathbb{F}_{q^{p_0}}$-maximal
or $\mathbb{F}_{q^{p_0}}$-minimal.

\begin{theorem}\label{cpq}
Assume  
$p_0\neq 2$, $e \geq 1$, and $A\subset \mathbb{F}_q^2$.
Recall that $q=p^n$ and $q=p_0^{f_0}$. 
\begin{itemize}
\item[{\rm (1)}]
Assume that $n$ is even and $p_0 \equiv 1 \pmod 4$. 
The curve 
$\overline{C}_R$ is $\mathbb{F}_{q^{p_0}}$-maximal if $\displaystyle \left( \frac{a_e}{q} \right) = -1$.
It is $\mathbb{F}_{q^{p_0}}$-minimal if $\displaystyle \left( \frac{a_e}{q} \right) = 1$.

\item[{\rm (2)}]
Assume that $n$ is even and 
$p_0 \equiv 3 \pmod 4$.
The curve 
$\overline{C}_R$ is $\mathbb{F}_{q^{p_0}}$-maximal if
$\displaystyle \left( \frac{a_e}{q} \right) = -(-1)^{f_0/2}$.
It is $\mathbb{F}_{q^{p_0}}$-minimal
if $\displaystyle \left( \frac{a_e}{q} \right) = (-1)^{f_0/2}$.

\item[{\rm (3)}]
Let $k \geq 1$ be a positive integer.
If $n$ is odd, the curve $\overline{C}_R$ is neither $\mathbb{F}_{q^k}$-maximal nor $\mathbb{F}_{q^k}$-minimal.
\end{itemize}
\end{theorem}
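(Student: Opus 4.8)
The plan is to compute, for each non-trivial $\psi$ and each $\xi \in A_\psi^\vee$, the $p_0$-th power $\tau_\xi^{p_0}$ and compare it with $\pm q^{p_0/2}$, invoking Theorem \ref{tt} (2) with $k = p_0$. By Corollary \ref{tct} we have $\tau_\xi = \psi_q(-4^{-1}c_A^{-1}\eta^2)\bigl(\tfrac{2a_e}{q}\bigr)G(\psi_q)$, where the first factor is a $p_0$-th root of unity (it lies in the image of $\psi$, a character of the $p_0$-torsion group $\mathbb{F}_p$), the Legendre-type symbol is $\pm 1$, and $G(\psi_q)$ is a quadratic Gauss sum. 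Hence
\[
\tau_\xi^{p_0} = \psi_q(-4^{-1}c_A^{-1}\eta^2)^{p_0}\,\Bigl(\tfrac{2a_e}{q}\Bigr)^{p_0} G(\psi_q)^{p_0} = \Bigl(\tfrac{2a_e}{q}\Bigr) G(\psi_q)^{p_0},
\]
since $p_0$ is odd and $\psi_q(\cdot)^{p_0} = 1$. So the whole computation reduces to understanding $G(\psi_q)^{p_0}$, and in particular whether it is independent of $\psi$.

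First I would dispose of part (3). If $n$ is odd, Lemma \ref{cpql} (with $k$ any odd integer, e.g.\ $k = p_0$ or more generally the odd part of any exponent) shows that $G(\psi_q)^{p_0}$ depends on $\psi$: concretely, replacing $\psi$ by $\psi' = \psi(\lambda\,\cdot)$ multiplies $G(\psi_q)^{p_0}$ by $\bigl(\tfrac{\lambda}{q}\bigr)$, which is $-1$ for a suitable $\lambda \in \mathbb{F}_p^\times$ when $n$ is odd. Thus the set of eigenvalues $\{\tau_\xi\}$ contains both a value $\tau$ and its negative $-\tau$ (more carefully: $\tau_\xi$ and $\tau_{\xi'}$ with $\tau_{\xi'}^{p_0} = -\tau_\xi^{p_0}$), so they cannot all equal $-q^{k/2}$ nor all equal $q^{k/2}$ for any $k$ — indeed if $\tau_\xi^k = \pm q^{k/2}$ for all $\xi$ then taking $k$'s odd part we contradict Lemma \ref{cpql}. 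One has to be slightly careful that for every pair $(\lambda\text{-twist})$ the element $\eta$ also changes; but $\psi_q(\cdots)^{p_0}=1$ kills that factor after taking $p_0$-th powers, and for general $k$ one argues with the odd part, so this is only a bookkeeping point, not a real obstacle. This gives (3) for all $k \geq 1$, not merely $k = p_0$.

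Next, for (1) and (2) assume $n$ is even. Then $f_0 = n\cdot(\text{power-exponent of }p\text{ over }p_0)$ — wait, more precisely $f_0$ is even because $p = p_0^{f_0/n}$ forces $n \mid f_0$ and $n$ even gives $f_0$ even; so Lemma \ref{HG} applies and $G(\psi_q) = (-1)^{f_0(p_0-1)/4}\sqrt{q}$, independent of $\psi$ (consistent with Lemma \ref{cpql}). Therefore $G(\psi_q)^{p_0} = (-1)^{p_0 f_0(p_0-1)/4}\,q^{p_0/2} = (-1)^{f_0(p_0-1)/4}\,q^{p_0/2}$ since $p_0$ is odd, and
\[
\tau_\xi^{p_0} = \Bigl(\tfrac{2a_e}{q}\Bigr)(-1)^{f_0(p_0-1)/4}\,q^{p_0/2},
\]
uniformly in $\psi$ and $\xi$. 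Now $\bigl(\tfrac{2a_e}{q}\bigr) = \bigl(\tfrac{2}{q}\bigr)\bigl(\tfrac{a_e}{q}\bigr)$, and since $n$ is even $\bigl(\tfrac{2}{q}\bigr) = 2^{(q-1)/2} = 2^{(p^{n/2}-1)(p^{n/2}+1)/2} = 1$ by the same trick as in Lemma \ref{cpql}; so $\bigl(\tfrac{2a_e}{q}\bigr) = \bigl(\tfrac{a_e}{q}\bigr)$. When $p_0 \equiv 1 \pmod 4$ we have $(-1)^{f_0(p_0-1)/4} = 1$, so $\tau_\xi^{p_0} = \bigl(\tfrac{a_e}{q}\bigr)q^{p_0/2}$, and by Theorem \ref{tt} (2) the curve is $\mathbb{F}_{q^{p_0}}$-maximal iff $\bigl(\tfrac{a_e}{q}\bigr) = -1$ and $\mathbb{F}_{q^{p_0}}$-minimal iff $\bigl(\tfrac{a_e}{q}\bigr) = 1$, giving (1). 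When $p_0 \equiv 3 \pmod 4$ we have $(-1)^{f_0(p_0-1)/4} = (-1)^{f_0/2}$, so $\tau_\xi^{p_0} = \bigl(\tfrac{a_e}{q}\bigr)(-1)^{f_0/2}q^{p_0/2}$, hence maximal iff $\bigl(\tfrac{a_e}{q}\bigr)(-1)^{f_0/2} = -1$, i.e.\ $\bigl(\tfrac{a_e}{q}\bigr) = -(-1)^{f_0/2}$, and minimal iff $\bigl(\tfrac{a_e}{q}\bigr) = (-1)^{f_0/2}$, giving (2).

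The only real subtlety — the "main obstacle" — is making sure the argument for (3) is airtight for \emph{all} $k$, not just $k = p_0$: one needs that twisting $\psi$ genuinely produces two eigenvalues whose $k$-th powers have opposite sign, and that the extra $\psi_q(-4^{-1}c_A^{-1}\eta^2)$ factors (which do change under the twist, via $\eta$) cannot conspire to restore equality. Writing $k = 2^a m$ with $m$ odd, one has $\tau_\xi^k = \zeta\,\bigl(\tfrac{2a_e}{q}\bigr)^k G(\psi_q)^k$ with $\zeta$ a $p_0$-power root of unity; if all these equal $\pm q^{k/2}$ then in particular $\tau_\xi^{kp_0}$ is independent of $\xi$, forcing $G(\psi_q)^{kp_0}$ — hence $G(\psi_q)^{m p_0}$ after removing the $2$-part, an odd exponent — to be independent of $\psi$, contradicting Lemma \ref{cpql} since $n$ is odd. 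Everything else is the routine Gauss-sum bookkeeping sketched above.
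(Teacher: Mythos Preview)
Your treatment of (1) and (2) is correct and is essentially the paper's proof: raise $\tau_\xi$ to the $p_0$-th power using Corollary~\ref{tct}, use $\psi_q^{p_0}=1$ to kill the first factor, evaluate $G(\psi_q)$ by Lemma~\ref{HG} (valid since $n$ even forces $f_0$ even), and simplify $\bigl(\tfrac{2a_e}{q}\bigr)=\bigl(\tfrac{a_e}{q}\bigr)$. For (3) with $k$ odd your reduction to Lemma~\ref{cpql} via the odd exponent $kp_0$ is also correct and is exactly what the paper's one-line proof does.

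The gap is in your extension of (3) to even $k$. The step ``$G(\psi_q)^{kp_0}$ independent of $\psi$, hence $G(\psi_q)^{mp_0}$ independent of $\psi$ after removing the $2$-part'' does not follow: since $G(\psi_q)^2=(-1)^{f_0}q$ is already independent of $\psi$, \emph{every} even power of $G(\psi_q)$ is automatically independent of $\psi$, so the even exponent $kp_0$ tells you nothing about the odd exponent $mp_0$. Nor can this be repaired: part (3) as literally stated for all $k\ge 1$ is false, because $\tau_\xi^{4p_0}=\psi_q(\cdots)^{4p_0}\,q^{2p_0}=q^{2p_0}$ for every $\xi$, so $\overline{C}_R$ is always $\mathbb{F}_{q^{4p_0}}$-minimal regardless of the parity of $n$ (this is the \cite{TT} result alluded to in the introduction). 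The paper's own proof also cites only the odd-$k$ Lemma~\ref{cpql} and thus likewise establishes (3) only for odd $k$; you should read the claim that way and discard the even-$k$ paragraph rather than try to patch it.
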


\begin{proof}
Assume that $n$ is even.
Since $\psi_q$ is a character of order $p_0$, we have $\psi_q^{p_0} = 1$.
By Corollary \ref{tct}, Lemma \ref{HG}, and $p_0 \neq 2$, we have
\[
  \tau_{\xi}^{p_0}
  = \left(\frac{2a_e}{q}\right) 
G(\psi_q)^{p_0}
  = \left(\frac{a_e}{q}\right)(-1)^{f_0(p_0-1)/4} (\sqrt{q})^{p_0}. 
\]
Thus the claim (1) and (2) follow.

The claim (3) follows from Theorem \ref{tt} (2) and Lemma \ref{cpql}. 
\end{proof}

\subsection{Maximality and minimality over $\mathbb{F}_{q}$}

In order to study the $\mathbb{F}_q$-maximality and the $\mathbb{F}_q$-minimality of van der Geer--van der Vlugt curves, we consider the following condition.

Let $a(x) \in \mathbb{F}_q[x]$ be an $\mathbb{F}_p$-linearized polynomial as in \eqref{af}. Recall
$\Ker a=\{x\in \mathbb{F} \mid a(x)=0\}$. 

\begin{condition}[Tatematsu \cite{Ta}]
\label{ast}
For any element  
$\la \in \mathbb{F}_q$ satisfying $\Tr_{q/p}(\la t)=0$
for every $t \in \Ker a$, we have
\[ \Tr_{q/p}(c_A^{-1} \la^2) = 0. \]
\end{condition}

\begin{lemma}\label{tta}
Assume $p_0\neq 2$, $e \geq 1$, and $A\subset \mathbb{F}_q^2$.
The curve $\overline{C}_R$ satisfies
Condition \ref{ast}
if and only if,  
for any $\la \in \mathbb{F}_q$, there 
exists $t \in \Ker a$ such that 
\[
  \Tr_{q/p}(-4^{-1} c_A^{-1} \la^2) = \Tr_{q/p}(\la t). 
\] 
\end{lemma}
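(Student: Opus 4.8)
The plan is to show that Condition \ref{ast} is exactly equivalent to the surjectivity of an affine-linear map onto $\mathbb{F}_p$, and that this surjectivity is in turn equivalent to the displayed solvability statement. First I would set up the linear-algebra framework. Let $W \coloneqq \{\la \in \mathbb{F}_q \mid \Tr_{q/p}(\la t) = 0 \text{ for all } t \in \Ker a\}$ be the $\mathbb{F}_p$-subspace of $\mathbb{F}_q$ orthogonal (with respect to the nondegenerate $\mathbb{F}_p$-bilinear form $(x,y) \mapsto \Tr_{q/p}(xy)$) to $\Ker a$. Since this form is nondegenerate, $\dim_{\mathbb{F}_p} W = n - \dim_{\mathbb{F}_p} \Ker a$. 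From \eqref{af} we have $a(F_A(x)) = x^q - x$, so $\Ker a = F_A(\mathbb{F}_q)$; because $F_A$ has kernel $\overline{A}$ of dimension $e$ on $\mathbb{F}$ and $\overline{A} \subset \mathbb{F}_q$, the image $F_A(\mathbb{F}_q)$ has $\mathbb{F}_p$-dimension $n - e$, hence $\dim_{\mathbb{F}_p} W = e$. (This count is convenient but not strictly needed for the equivalence itself; I would include it since it clarifies the structure.)

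Next, observe that Condition \ref{ast} says: the map $W \to \mathbb{F}_p$, $\la \mapsto \Tr_{q/p}(c_A^{-1}\la^2)$ is identically zero. Since $-4^{-1} \in \mathbb{F}_p^\times$ (here $p_0 \neq 2$), this is equivalent to $\la \mapsto \Tr_{q/p}(-4^{-1} c_A^{-1}\la^2)$ being identically zero on $W$, which is the quantity appearing in the lemma. Now I would invoke the standard duality: for the nondegenerate form $\Tr_{q/p}(xy)$, a functional $\la \mapsto \Tr_{q/p}(\la t)$ on $\mathbb{F}_q$ vanishes on $W$ if and only if $t \in W^\perp = \Ker a$ (double orthogonal complement, using nondegeneracy). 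Therefore, for a fixed $\mu \in \mathbb{F}_q$, we have the equivalence: $\Tr_{q/p}(\la \mu) = 0$ for all $\la \in W$ $\iff$ $\mu \in \Ker a$.

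Combining these: Condition \ref{ast} holds $\iff$ for every $\la \in \mathbb{F}_q$ (note $W$ is defined as a subset of all such $\la$, and the condition quantifies over $\la \in W$, i.e. over all $\la \in \mathbb{F}_q$ satisfying the orthogonality), the functional $\la' \mapsto \Tr_{q/p}(\la' \cdot (-4^{-1}c_A^{-1}\la^2))$ vanishes on $W$ — wait, more carefully: Condition \ref{ast} is the single statement ``$\Tr_{q/p}(-4^{-1}c_A^{-1}\la^2) = 0$ for all $\la \in W$''. I want to rephrase this. The cleanest route: Condition \ref{ast} holds $\iff$ the linear functional $\phi \colon \mathbb{F}_q \to \mathbb{F}_p$, $\mu \mapsto$ ``$\Tr_{q/p}(\mu \cdot ?)$'' — hmm, the issue is that $\la \mapsto \la^2$ is not linear, so I cannot directly phrase Condition \ref{ast} as the vanishing of a single linear functional. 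Instead I would argue pointwise: fix $\la \in \mathbb{F}_q$. The element $-4^{-1}c_A^{-1}\la^2 \in \mathbb{F}_q$; by the double-orthogonal-complement equivalence above applied with $\mu = -4^{-1}c_A^{-1}\la^2$, the scalar $\Tr_{q/p}(-4^{-1}c_A^{-1}\la^2 \cdot 1)$... no — I need $\Tr_{q/p}$ of the element itself, not a pairing.

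Let me restate the final step correctly, since this is the substantive point. Condition \ref{ast} asserts $\Tr_{q/p}(-4^{-1}c_A^{-1}\la^2) = 0$ for every $\la \in W$. Equivalently: for every $\la \in \mathbb{F}_q$, \emph{if} $\la \in W$, \emph{then} $\Tr_{q/p}(-4^{-1}c_A^{-1}\la^2) = 0$; contrapositively, if $\Tr_{q/p}(-4^{-1}c_A^{-1}\la^2) \neq 0$ then $\la \notin W$, i.e. there exists $t \in \Ker a$ with $\Tr_{q/p}(\la t) \neq 0$. This is close to but not identical to the lemma's statement, which asserts that for every $\la \in \mathbb{F}_q$ there exists $t \in \Ker a$ with $\Tr_{q/p}(-4^{-1}c_A^{-1}\la^2) = \Tr_{q/p}(\la t)$. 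The bridge is: the set $\{\Tr_{q/p}(\la t) \mid t \in \Ker a\}$ is an $\mathbb{F}_p$-subspace of $\mathbb{F}_p$, hence is either $\{0\}$ or all of $\mathbb{F}_p$. It equals $\{0\}$ precisely when $\la \in W$. So the lemma's solvability condition for a given $\la$ holds automatically when $\la \notin W$ (the set is all of $\mathbb{F}_p$), and when $\la \in W$ it holds iff $\Tr_{q/p}(-4^{-1}c_A^{-1}\la^2) = 0$. Thus ``solvability for all $\la$'' $\iff$ ``$\Tr_{q/p}(-4^{-1}c_A^{-1}\la^2) = 0$ for all $\la \in W$'' $\iff$ Condition \ref{ast}. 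The main obstacle is purely bookkeeping: keeping straight the two quantifiers (over $\la$ and over $t$) and using the ``subspace of $\mathbb{F}_p$ is $0$ or everything'' dichotomy to convert the ``there exists $t$'' into the clean orthogonality statement. I expect no real difficulty beyond this care; the identification $\Ker a = F_A(\mathbb{F}_q)$ from \eqref{af} and the nondegeneracy of the trace form are the only external inputs.
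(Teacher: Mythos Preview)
Your proposal is correct and takes essentially the same approach as the paper. The paper's proof splits into the same two cases---$\lambda$ orthogonal to $\Ker a$ versus not---and in the second case explicitly scales a witness $t$ by $\mu = \Tr_{q/p}(-4^{-1}c_A^{-1}\lambda^2)/\Tr_{q/p}(\lambda t) \in \mathbb{F}_p$ (using that $\Ker a$ is an $\mathbb{F}_p$-subspace), which is exactly your ``the image $\{\Tr_{q/p}(\lambda t) : t \in \Ker a\}$ is an $\mathbb{F}_p$-subspace of $\mathbb{F}_p$, hence $\{0\}$ or all of $\mathbb{F}_p$'' dichotomy; your digression on $\dim W$ and $\Ker a = F_A(\mathbb{F}_q)$ is correct but, as you note, unnecessary for the equivalence.
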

\begin{proof}
Assume that the curve $\overline{C}_R$ satisfies
Condition \ref{ast}.
Let $\la \in \mathbb{F}_q$. 
If $\Tr_{q/p}(\la t)=0$ for every $t \in \Ker a$, we have 
$\Tr_{q/p}(-4^{-1} c_A^{-1} \la^2) = \Tr_{q/p}(\la t)=0$ for every $t \in \Ker a$ by Condition \ref{ast}.
Thus we may assume that there exists $t \in \Ker a$ such that $\Tr_{q/p}(\la t) \neq 0$. 
Let 
\[
\mu \coloneqq \frac{\Tr_{q/p}(-4^{-1} c_A^{-1} \la^2)}{
\Tr_{q/p}(\la t). 
}
\]
Then $\mu t \in \Ker a$ by $\mu \in \mathbb{F}_p$, and we have
$\Tr_{q/p}(-4^{-1} c_A^{-1} \la^2) = \Tr_{q/p}(\la \mu t)$.
Hence, the claim follows. 
The `if' part is clear. 
\end{proof}

Recall that for a character $\xi \in A_{\psi}^{\vee}$,
the character $\xi' \in \mathbb{F}_q^{\vee}$
is defined by
$\xi' \coloneqq \xi \circ \phi_{\overline{A}} \circ a$,
and
$\eta \in \mathbb{F}_q^{\times}$ is a unique element
such that $\xi'(x) = \psi_q(\eta x)$
for every $x \in \mathbb{F}_q$;
see Subsection \ref{Heisenberg group}.

\begin{lemma}\label{ttb}
Assume
$p_0\neq 2$, $e \geq 1$, and $A\subset \mathbb{F}_q^2$.
The curve $\overline{C}_R$ satisfies
Condition \ref{ast}
if and only if
\[ \psi_q(-4^{-1} c_A^{-1} \eta^2) = 1 \]
for any $\psi \in \mathbb{F}_p^{\vee}
\setminus \{1\}$ and $\xi \in A_{\psi}^{\vee}$.
\end{lemma}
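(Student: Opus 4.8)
The plan is to translate Condition \ref{ast} into the assertion of Lemma \ref{ttb} by carefully unwinding the definitions of $\xi'$, $\eta$, and the pairing between $\mathbb{F}_q$ and itself via $\Tr_{q/p}$, and then to invoke Lemma \ref{tta} as the bridge. The key observation is that, for fixed $\psi \in \mathbb{F}_p^{\vee}\setminus\{1\}$, as $\xi$ ranges over $A_{\psi}^{\vee}$ the associated element $\eta \in \mathbb{F}_q^{\times}$ ranges over a single coset of a subgroup of $(\mathbb{F}_q,+)$ — concretely, I expect $\eta$ to run over exactly those elements of $\mathbb{F}_q$ that are sent to a prescribed nontrivial value on $\Ker a$ by the trace pairing, with the "error" being controlled by the image of $\phi_{\overline{A}}\circ a$. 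So I would first pin down this description precisely.

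First I would recall from \eqref{xi} that $\xi' = \xi \circ \phi_{\overline{A}}\circ a \colon \mathbb{F}_q \to \overline{\mathbb{Q}}_\ell^\times$, and that $\xi'(x) = \psi_q(\eta x)$ for all $x$. Since $a\colon \mathbb{F}_q \to \overline{A}$ is the $\mathbb{F}_p$-linearized surjection with kernel $\Ker a$ from \eqref{af}, and $\phi_{\overline{A}}$ is an injective homomorphism $\overline{A}\hookrightarrow A$, a character $\xi \in A^{\vee}$ with $\xi|_{\mathbb{F}_p}=\psi$ pulls back to a character of $\mathbb{F}_q$ that is trivial on $\Ker a$; conversely, restriction along $\phi_{\overline{A}}\circ a$ gives a bijection between $A_{\psi}^{\vee}$ and the set of characters $\chi \in \mathbb{F}_q^{\vee}$ trivial on $\Ker a$ and restricting suitably to the relevant copy of $\mathbb{F}_p$. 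Writing each such $\chi$ as $x \mapsto \psi_q(\eta x)$, triviality on $\Ker a$ is equivalent to $\Tr_{q/p}(\eta t) = 0$ for all $t \in \Ker a$ — wait, not quite: since $\psi$ is nontrivial, I must track that $\xi'$ restricted to the image of $\mathbb{F}_p$ inside $\mathbb{F}_q$ under $a$ equals $\psi$, which fixes $\eta$ modulo the annihilator of $\Ker a$. The upshot I am aiming for: $\{\eta \mid \xi \in A_{\psi}^{\vee}\}$ is precisely one nonzero coset $\eta_0 + K$, where $K = \{\la \in \mathbb{F}_q \mid \Tr_{q/p}(\la t) = 0\ \forall t \in \Ker a\}$, and moreover every $\la$ appearing in Condition \ref{ast} (i.e.\ every $\la \in K$) differs from some valid $\eta$ by a fixed element.

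Once that dictionary is in place, the equivalence is a direct matching. Assuming Condition \ref{ast}, for $\psi, \xi$ as given, $\eta$ lies in the coset just described; I would show $-4^{-1}c_A^{-1}\eta^2$ has trace zero by reducing, via $\mathbb{F}_p$-linearity of the trace and the quadratic nature of $\eta \mapsto \eta^2$, to the statement of Lemma \ref{tta} (which reformulates Condition \ref{ast} as: every $-4^{-1}c_A^{-1}\la^2$ is congruent mod the trace pairing to some $\Tr_{q/p}(\la t)$, $t \in \Ker a$); the factor $\psi_q(\Tr_{q/p}(\la t)) = 1$ because $\Ker a$ annihilates $\eta$ up to the fixed shift, and the shift contribution is absorbed because $\eta_0$ itself can be chosen in $K$'s complement compatibly. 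Then $\psi_q(-4^{-1}c_A^{-1}\eta^2) = \psi(\Tr_{q/p}(-4^{-1}c_A^{-1}\eta^2)) = \psi(0) = 1$. Conversely, if $\psi_q(-4^{-1}c_A^{-1}\eta^2) = 1$ for all admissible $(\psi,\xi)$, then since $\psi$ ranges over all nontrivial characters of $\mathbb{F}_p$, we get $\Tr_{q/p}(-4^{-1}c_A^{-1}\eta^2) = 0$ for every $\eta$ in the coset, which by the dictionary and Lemma \ref{tta} gives back Condition \ref{ast}.

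\textbf{Main obstacle.} The delicate point is the precise identification of the set $\{\eta : \xi \in A_{\psi}^{\vee}\}$ as a coset and the bookkeeping of how the nontrivial restriction condition $\xi|_{\mathbb{F}_p} = \psi$ interacts with triviality on $\Ker a$ — in particular making sure the "$-4^{-1}$" normalization and the center-of-$H_R$ identification are handled so that the shift term contributes trivially under $\psi_q$. I expect the cleanest route is to prove directly that $\eta$ ranges over $\eta_0 + K$ where $K$ is the annihilator of $\Ker a$ under $\Tr_{q/p}$, possibly using that $\phi_{\overline{A}}$ together with $a$ realizes $\mathbb{F}_q \twoheadrightarrow \overline{A}$ with $A$-characters pulling back to $\mathbb{F}_q$-characters, and then everything else is the formal manipulation above combined with Lemma \ref{tta}. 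Everything else is routine once that structural fact is nailed down.
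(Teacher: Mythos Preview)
Your overall strategy---identify the set $\{\eta : \xi \in A_\psi^\vee\}$ in terms of the annihilator
\[
K \coloneqq \{\lambda \in \mathbb{F}_q \mid \Tr_{q/p}(\lambda t)=0\ \text{for all } t\in \Ker a\}
\]
and then invoke Lemma~\ref{tta}---is exactly the route the paper takes. But you have one concrete misidentification that generates all the difficulties you flag as the ``main obstacle'': the set of $\eta$'s is \emph{not} a nontrivial coset $\eta_0 + K$; it is $K$ itself.

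The reason is that the constraint $\xi|_{\mathbb{F}_p}=\psi$ has no effect on $\xi'=\xi\circ\phi_{\overline{A}}\circ a$. Since $A$ is abelian, $f_R$ is symmetric on $\overline{A}\times\overline{A}$, so $\phi_{\overline{A}}$ is a genuine group homomorphism and $A \simeq \phi_{\overline{A}}(\overline{A}) \times \mathbb{F}_p$ as an internal direct product (the paper uses this explicitly). Fixing $\xi|_{\mathbb{F}_p}=\psi$ pins down only the second factor; the restriction of $\xi$ to $\phi_{\overline{A}}(\overline{A})$ is completely free, so $\xi'$ ranges over \emph{all} characters of $\mathbb{F}_q$ trivial on $\Ker a$, and hence $\eta$ ranges over all of $K$. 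There is no shift term to absorb, and the passage you were worried about (``$\xi'$ restricted to the image of $\mathbb{F}_p$ inside $\mathbb{F}_q$ under $a$'') does not arise: $a$ maps $\mathbb{F}_q$ into $\overline{A}$, and the center $\mathbb{F}_p\subset A$ is complementary to the image of $\phi_{\overline{A}}$.

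With this correction both directions are immediate. For Condition~\ref{ast} $\Rightarrow$ the statement: since $\eta\in K$, Condition~\ref{ast} gives $\Tr_{q/p}(c_A^{-1}\eta^2)=0$ directly, hence $\psi_q(-4^{-1}c_A^{-1}\eta^2)=1$. (The paper phrases this via Lemma~\ref{tta} and the identity $\psi_q(\eta t)=\xi'(t)=1$ for $t\in\Ker a$, which is the same content.) For the converse: because the range of $\eta$ equals $K$ \emph{independently of} $\psi$, for any fixed $\lambda\in K$ and any nontrivial $\psi$ you can realize $\lambda$ as an $\eta$, so the hypothesis gives $\psi(\Tr_{q/p}(-4^{-1}c_A^{-1}\lambda^2))=1$ for all $\psi$, hence the trace vanishes; this is exactly Condition~\ref{ast}. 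Your version of the converse had a genuine gap here, since if the coset depended on $\psi$ you could not fix $\eta$ and let $\psi$ vary.
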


\begin{proof}

Assume that
$\psi_q(-4^{-1} c_A^{-1} \eta^2) = 1$
for any $\psi \in \mathbb{F}_p^{\vee}
\setminus \{1\}$ and $\xi \in A_{\psi}^{\vee}$.
We check the condition in Lemma \ref{tta}. 
Let $\eta_0 \in \mathbb{F}_q$.
We set 
\[ V_{\eta_0}\coloneqq\{\Tr_{q/p}(\eta_0 t) \mid 
t \in \Ker a\}. \] 
Since $a(x)$ is an $\mathbb{F}_p$-linearized polynomial,
$\Ker a$ is an $\mathbb{F}_p$-vector space.
Thus, $V_{\eta_0} = \mathbb{F}_p$ or $0$.
\begin{itemize}
\item
If $V_{\eta_0}=\mathbb{F}_p$,
there exists $t \in \Ker a$ such that 
\[
\Tr_{q/p}(-4^{-1} c_A^{-1} \eta_0^2) = \Tr_{q/p}(\eta_0 t),
\]
and the condition in Lemma \ref{tta} is satisfied. 
\item
If $V_{\eta_0}=0$, we take any $\psi_0 \in \mathbb{F}_p^{\vee} \setminus \{1\}$.
We define 
\[
\xi_0 \colon \Ima (\phi_{\overline{A}} \circ a)
\to \overline{\mathbb{Q}}_{\ell}^{\times};\ 
\phi_{\overline{A}} \circ a(t) \mapsto 
(\psi_0)_q(\eta_0 t) = \psi_0(\Tr_{q/p}(\eta_0 t)),
\]
which is well-defined by $V_{\eta_0} = 0$.
This $\xi_0$ extends to a character $\xi$ of $A$ such that 
$\xi \in A_{\psi_0}^{\vee}$
since $A \simeq \Ima (\phi_{\overline{A}} \circ a) \times 
\mathbb{F}_p$. 
By the assumption, we have 
$(\psi_0)_q(-4^{-1} c_A^{-1} \eta_0^2) = 1$.
Since $\psi_0 \in \mathbb{F}_p^{\vee} \setminus \{1\}$
is arbitrary,
we have
$\Tr_{q/p}(-4^{-1} c_A^{-1} \eta_0^2) = 0$,
and the condition in Lemma \ref{tta} is satisfied. 
\end{itemize}

To the contrary, assume Condition \ref{ast}.
Let  $\psi \in \mathbb{F}_p^{\vee}
\setminus \{1\}$ and $\xi \in A_{\psi}^{\vee}$. 
From Lemma \ref{tta}, 
there exists $t \in \Ker a$ such that 
\[
\Tr_{q/p}(-4^{-1} c_A^{-1} \eta^2) = \Tr_{q/p}(\eta t)
\]
for some $t \in \Ker a$.
Then we have
$\xi'(t) = (\xi \circ \phi_{\overline{A}} \circ a)(t) = 0$.
Therefore, we have
\[
\psi_q(-4^{-1} c_A^{-1} \eta^2) = \psi_q(\eta t) = \xi'(t) = 1.
\]
Thus the claim follows.
\end{proof}

\begin{theorem}\label{ttbb}
Assume $p_0\neq 2$, $e \geq 1$, and $A\subset \mathbb{F}_q^2$.
Let $k \geq 1$ be a positive integer 
prime to $p_0$.
If $\overline{C}_R$ does not satisfy Condition \ref{ast}, 
the curve $\overline{C}_R$ is 
neither $\mathbb{F}_{q^k}$-maximal nor 
$\mathbb{F}_{q^k}$-minimal. 
\end{theorem}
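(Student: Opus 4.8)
The plan is to argue by contradiction using the explicit description of the eigenvalues $\tau_\xi$ from Theorem~\ref{tt}(1) together with the characterization of Condition~\ref{ast} given in Lemma~\ref{ttb}. Suppose $\overline{C}_R$ does not satisfy Condition~\ref{ast}. By Lemma~\ref{ttb}, there exist a non-trivial character $\psi \in \mathbb{F}_p^{\vee} \setminus \{1\}$ and a character $\xi \in A_\psi^{\vee}$ such that $\psi_q(-4^{-1} c_A^{-1} \eta^2) \neq 1$, where $\eta \in \mathbb{F}_q^{\times}$ is the element attached to $\xi$ as in \eqref{xi}. Since $\psi_q$ takes values in the group $\mu_{p_0}$ of $p_0$-th roots of unity, the value $\zeta \coloneqq \psi_q(-4^{-1} c_A^{-1}\eta^2)$ is a \emph{primitive} $p_0$-th root of unity. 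Write $\epsilon \coloneqq \left(\frac{2a_e}{q}\right) \in \{\pm 1\}$ and $G \coloneqq G(\psi_q)$. By Corollary~\ref{tct} we have $\tau_\xi = \zeta\, \epsilon\, G$.

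Now fix $k \geq 1$ prime to $p_0$. By Theorem~\ref{tt}(2), if $\overline{C}_R$ were $\mathbb{F}_{q^k}$-maximal or $\mathbb{F}_{q^k}$-minimal, then $\tau_\xi^k = \pm q^{k/2}$, hence $\tau_\xi^k / q^{k/2} \in \{\pm 1\}$, in particular $(\tau_\xi^k/q^{k/2})^2 = 1$. Using $G^2 = \pm q$ by \eqref{gauss sum square}, we compute
\[
\left(\frac{\tau_\xi^k}{q^{k/2}}\right)^2 = \frac{\tau_\xi^{2k}}{q^k} = \frac{\zeta^{2k}\,\epsilon^{2k}\,G^{2k}}{q^k} = \zeta^{2k}\,\frac{(G^2)^k}{q^k} = \zeta^{2k} (\pm 1)^k = \pm\, \zeta^{2k}.
\]
Thus $\zeta^{2k} \in \{\pm 1\}$. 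Since $\zeta$ is a primitive $p_0$-th root of unity and $p_0$ is an odd prime, $\zeta^2$ is again a primitive $p_0$-th root of unity; as $\gcd(2k, p_0) = 1$ (using $\gcd(k,p_0)=1$ and $p_0$ odd), $\zeta^{2k}$ is still a primitive $p_0$-th root of unity, hence $\zeta^{2k} \neq 1$. On the other hand $\zeta^{2k}$ has odd order $p_0$, so it cannot equal $-1$ either. This contradicts $\zeta^{2k} \in \{\pm 1\}$, and therefore $\overline{C}_R$ is neither $\mathbb{F}_{q^k}$-maximal nor $\mathbb{F}_{q^k}$-minimal.

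The only subtle point is the passage from "does not satisfy Condition~\ref{ast}" to the existence of a single bad pair $(\psi,\xi)$ with $\tau_\xi$ of the stated shape; this is exactly the content of Lemma~\ref{ttb}, so no extra work is needed. The rest is the elementary observation that a primitive $p_0$-th root of unity raised to a power coprime to $p_0$ remains a primitive $p_0$-th root of unity, and that such an element (of odd prime order) is never $\pm 1$. I expect the main obstacle, if any, to be purely bookkeeping: making sure the exponent in $\tau_\xi^{2k}/q^k$ is handled correctly so that the $\epsilon = \pm1$ factor and the ambiguous sign in $G^2 = (-1)^{f_0} q$ both disappear, which they do upon squaring. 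No genuine difficulty arises beyond this.
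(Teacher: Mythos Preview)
Your proof is correct and follows essentially the same approach as the paper: invoke Lemma~\ref{ttb} to obtain a pair $(\psi,\xi)$ with $\psi_q(-4^{-1}c_A^{-1}\eta^2)$ a primitive $p_0$-th root of unity, then observe that this factor survives in $\tau_\xi^k$ and obstructs $\tau_\xi^k = \pm q^{k/2}$. Your squaring trick to eliminate $\epsilon$ and the sign in $G^2$ simultaneously is a clean way to make explicit the final step that the paper leaves to the reader.
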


\begin{proof}
Since $\overline{C}_R$ does not satisfy Condition \ref{ast},
we have $\psi_q(-4^{-1} c_A^{-1} \eta^2) \neq 1$
for some $\psi \in \mathbb{F}_p^{\vee}
\setminus \{1\}$ and $\xi \in A_{\psi}^{\vee}$
by Lemma \ref{ttb}.
The value of $\psi_q(-4^{-1} c_A^{-1} \eta^2)$
is a primitive $p_0$-th root of unity.
Since $k$ is prime to $p_0$,
$\psi_q(-4^{-1} c_A^{-1} \eta^2)^k$
is also a primitive $p_0$-th root of unity.
Hence the curve $\overline{C}_R$ is 
neither $\mathbb{F}_{q^k}$-maximal nor 
$\mathbb{F}_{q^k}$-minimal by Theorem \ref{tt}.
\end{proof}

\begin{theorem}\label{ttb3}
Assume
$p_0\neq 2$, $e \geq 1$, and $A\subset \mathbb{F}_q^2$.
Suppose that the curve $\overline{C}_R$ satisfies Condition \ref{ast}.
Then the following hold.
\begin{itemize}
\item[{\rm (1)}] 
The curve $\overline{C}_R$ is $\mathbb{F}_{q^4}$-minimal. 
\item[{\rm (2)}]
If $q \equiv 1 \pmod 4$, 
the curve $\overline{C}_R$ is $\mathbb{F}_{q^2}$-minimal.
\item[{\rm (3)}]
If $q \equiv 3 \pmod 4$,   
the curve $\overline{C}_R$ is $\mathbb{F}_{q^2}$-maximal. 
\end{itemize}
\end{theorem}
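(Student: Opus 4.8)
The plan is to read everything off from the explicit formula for $\tau_{\xi}$ in Theorem \ref{tt}(1), once Condition \ref{ast} is used to trivialize the character factor. First, since $\overline{C}_R$ satisfies Condition \ref{ast}, Lemma \ref{ttb} gives $\psi_q(-4^{-1} c_A^{-1} \eta^2) = 1$ for every $\psi \in \mathbb{F}_p^{\vee} \setminus \{1\}$ and every $\xi \in A_{\psi}^{\vee}$. Substituting this into the formula of Theorem \ref{tt}(1) (or into Corollary \ref{tct}) shows that $\tau_{\xi} = \left(\frac{c_A}{q}\right) G(\psi_q)$ for all such $\psi$, $\xi$. Since $\left(\frac{c_A}{q}\right) \in \{\pm 1\}$, squaring yields $\tau_{\xi}^2 = G(\psi_q)^2 = \left(\frac{-1}{q}\right) q$ by \eqref{gauss sum square}; in particular this value is one and the same for every $\psi$ and every $\xi$, and it equals $q$ when $q \equiv 1 \pmod 4$ and $-q$ when $q \equiv 3 \pmod 4$.

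Next I would feed this into the criterion of Theorem \ref{tt}(2). For part (2), when $q \equiv 1 \pmod 4$ we get $\tau_{\xi}^2 = q = q^{2/2}$ for all $\psi$, $\xi$, so $\overline{C}_R$ is $\mathbb{F}_{q^2}$-minimal. For part (3), when $q \equiv 3 \pmod 4$ we get $\tau_{\xi}^2 = -q = -q^{2/2}$, so $\overline{C}_R$ is $\mathbb{F}_{q^2}$-maximal. For part (1), squaring once more gives $\tau_{\xi}^4 = (\tau_{\xi}^2)^2 = q^2 = q^{4/2}$ in both congruence classes, so taking $k = 4$ in Theorem \ref{tt}(2) shows $\overline{C}_R$ is $\mathbb{F}_{q^4}$-minimal; alternatively, (1) is immediate from (2) and (3) together with Lemma \ref{maximal minimal extension}.

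There is no real obstacle here: the statement is a formal consequence of Theorem \ref{tt}, Lemma \ref{ttb}, and the Gauss sum square identity \eqref{gauss sum square}. The only points deserving a moment's care are bookkeeping — matching the exponent ``$k/2$'' in the equivalence ``$\mathbb{F}_{q^k}$-minimal $\iff \tau_{\xi}^k = q^{k/2}$'' exactly as stated in Theorem \ref{tt}(2), and observing that since $\tau_{\xi}^2$ is one fixed constant independent of $(\psi,\xi)$, the universal quantifier over $\psi \in \mathbb{F}_p^{\vee} \setminus \{1\}$ and $\xi \in A_{\psi}^{\vee}$ in that criterion is satisfied as soon as it holds for a single pair.
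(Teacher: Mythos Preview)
Your proof is correct and follows essentially the same route as the paper: use Lemma \ref{ttb} to kill the $\psi_q$-factor, square $\tau_{\xi}$ via \eqref{gauss sum square} to obtain $\tau_{\xi}^2 = \left(\tfrac{-1}{q}\right) q$, and read off the three cases from Theorem \ref{tt}(2). The paper's proof is simply the one-line compressed version of what you wrote.
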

\begin{proof}
By \eqref{gauss sum square}, Theorem \ref{tt}, and Lemma \ref{ttb}, we have
\[
\tau_{\xi}^2
= \psi_q(-4^{-1} c_A^{-1} \eta^2)^2 \cdot G(\psi_q)^2
= \left(\frac{-1}{q}\right) q. 
\]
Thus the claim follows. 
\end{proof}

Recall that $q=p^n$ and $q=p_0^{f_0}$.

\begin{theorem}\label{ttb4}
Assume $p_0\neq 2$, $e \geq 1$, and $A\subset \mathbb{F}_q^2$.
Suppose that the curve $\overline{C}_R$ satisfies Condition \ref{ast}.
\begin{itemize}
\item[{\rm (1)}] 
Assume that $n$ is even and $p_0 \equiv 1 \pmod 4$. 
The curve $\overline{C}_R$ is $\mathbb{F}_{q}$-maximal
if $\displaystyle \left( \frac{a_e}{q} \right) = -1$.
It is $\mathbb{F}_{q}$-minimal if
$\displaystyle \left( \frac{a_e}{q} \right) = 1$. 

\item[{\rm (2)}]
Assume that $n$ is even and $p_0 \equiv 3 \pmod 4$. 
The curve $\overline{C}_R$ is $\mathbb{F}_{q}$-maximal 
if $\displaystyle \left( \frac{a_e}{q} \right) = -(-1)^{f_0/2}$.
It is $\mathbb{F}_{q}$-minimal if
$\displaystyle \left( \frac{a_e}{q} \right) = (-1)^{f_0/2}$.  

\item[{\rm (3)}]
If $n$ is odd, the curve $\overline{C}_R$ is neither  $\mathbb{F}_{q}$-maximal nor $\mathbb{F}_{q}$-minimal. 
\end{itemize}
\end{theorem}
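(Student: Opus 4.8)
The strategy is to combine Theorem \ref{tt}, Corollary \ref{tct}, and Lemma \ref{ttb} exactly as in the proof of Theorem \ref{cpq}, but now exploiting that Condition \ref{ast} kills the ``Gauss-sum-free'' factor $\psi_q(-4^{-1}c_A^{-1}\eta^2)$. Since $\overline{C}_R$ satisfies Condition \ref{ast}, Lemma \ref{ttb} gives $\psi_q(-4^{-1}c_A^{-1}\eta^2)=1$ for every $\psi\in\mathbb{F}_p^{\vee}\setminus\{1\}$ and every $\xi\in A_\psi^{\vee}$. Feeding this into Corollary \ref{tct}, we get the clean expression
\[
\tau_{\xi}=\left(\frac{2a_e}{q}\right)G(\psi_q)
\]
for all such $\xi$. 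So the entire $L$-polynomial is controlled by the single sign $\left(\frac{2a_e}{q}\right)$ and the Gauss sum $G(\psi_q)$, and by Theorem \ref{tt} (2) the curve is $\mathbb{F}_q$-maximal (resp.\ $\mathbb{F}_q$-minimal) iff $\tau_\xi=-q^{1/2}$ (resp.\ $\tau_\xi=q^{1/2}$) for every $\xi$.

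For parts (1) and (2), assume $n$ is even. First I would note that $\left(\frac{2a_e}{q}\right)=\left(\frac{2}{q}\right)\left(\frac{a_e}{q}\right)$, and since $n$ is even we may write $q=p^{2m}$ so that $\left(\frac{2}{q}\right)=2^{(q-1)/2}=(2^{p^m-1})^{(p^m+1)/2}=1$; hence $\left(\frac{2a_e}{q}\right)=\left(\frac{a_e}{q}\right)$. Next, since $n$ is even, $f_0=n\cdot\log_{p_0}p$ is also even, so Lemma \ref{HG} applies and gives $G(\psi_q)=(-1)^{f_0(p_0-1)/4}\sqrt{q}$. Therefore
\[
\tau_{\xi}=\left(\frac{a_e}{q}\right)(-1)^{f_0(p_0-1)/4}\sqrt{q}.
\]
When $p_0\equiv 1\pmod 4$ the exponent $f_0(p_0-1)/4$ is even, so $\tau_\xi=\left(\frac{a_e}{q}\right)\sqrt{q}$, yielding $\mathbb{F}_q$-maximality iff $\left(\frac{a_e}{q}\right)=-1$ and $\mathbb{F}_q$-minimality iff $\left(\frac{a_e}{q}\right)=1$; this is (1). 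When $p_0\equiv 3\pmod 4$ the sign $(-1)^{f_0(p_0-1)/4}$ equals $(-1)^{f_0/2}$, so $\tau_\xi=\left(\frac{a_e}{q}\right)(-1)^{f_0/2}\sqrt{q}$, and the maximality/minimality criterion translates into the stated conditions $\left(\frac{a_e}{q}\right)=-(-1)^{f_0/2}$ resp.\ $\left(\frac{a_e}{q}\right)=(-1)^{f_0/2}$; this is (2).

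Part (3) is immediate: if $n$ is odd, apply Theorem \ref{tt} (2) with $k=1$ together with Lemma \ref{cpql} (taking the odd exponent $k=1$), which shows $G(\psi_q)$ — and hence $\tau_\xi$ — is not independent of the choice of $\psi$; in particular the $\tau_\xi$ cannot all equal $-\sqrt{q}$ or all equal $\sqrt{q}$, so $\overline{C}_R$ is neither $\mathbb{F}_q$-maximal nor $\mathbb{F}_q$-minimal. The only mildly delicate point in the whole argument is the bookkeeping of signs in (2) — checking that $(-1)^{f_0(p_0-1)/4}=(-1)^{f_0/2}$ when $p_0\equiv 3\pmod 4$ (which holds because $(p_0-1)/2$ is odd) and that $f_0$ is even whenever $n$ is even — but this is routine, so no real obstacle is expected.
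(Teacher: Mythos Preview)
Your proof is correct and follows essentially the same route as the paper: use Lemma \ref{ttb} to kill the factor $\psi_q(-4^{-1}c_A^{-1}\eta^2)$, apply Corollary \ref{tct} and Lemma \ref{HG} (noting $\left(\frac{2}{q}\right)=1$ for $n$ even) to reduce $\tau_\xi$ to $\left(\frac{a_e}{q}\right)(-1)^{f_0(p_0-1)/4}\sqrt q$, and read off (1), (2); for (3) the paper simply cites Theorem \ref{cpq} (3), which is proved via the same Lemma \ref{cpql} argument you give. The only cosmetic difference is that you spell out the sign bookkeeping in (1) and (2) a bit more explicitly than the paper does.
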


\begin{proof}
Assume that $n$ is even.
Then we have $\displaystyle \left(\frac{2}{q}\right) = 1$.
From Corollary \ref{tct}, Lemma \ref{HG},
and Lemma \ref{ttb}, it follows that 
\[
\tau_{\xi} = \left(\frac{2a_e}{q}\right)G(\psi_q)
= \left(\frac{a_e}{q}\right)
(-1)^{f_0(p_0-1)/4} \sqrt{q}. 
\]
Thus $\overline{C}_R$ is $\mathbb{F}_q$-maximal (resp.\ $\mathbb{F}_q$-minimal) if and only if $\displaystyle \left( \frac{a_e}{q} \right) = -(-1)^{f_0(p_0-1)/4}$ (resp.\ $\displaystyle \left( \frac{a_e}{q} \right)=(-1)^{f_0(p_0-1)/4}$). 
The claims (1) and (2) follow. 

The claim (3) is a special case of Theorem \ref{cpq} (3).
\end{proof}

\subsection{Maximality and minimality of the van der Geer--van der Vlugt curves}

In this subsection,
we collect several results concerning
the maximality and minimality of the van der Geer--van der Vlugt curves.
The results in this subsection are presumably well-known to the specialists.

\begin{proposition}
\label{split}
We write $q=p^n$. 
Assume that $p_0\neq 2$ and $n$ is even.
The following conditions are equivalent.
\begin{itemize}
\item[{\rm (1)}]
The curve $\overline{C}_R$ is $\mathbb{F}_{q}$-maximal or $\mathbb{F}_{q}$-minimal. 

\item[{\rm (2)}]
We have $V_R \subset \mathbb{F}_q$.
Namely, $\mathbb{F}_q$ is a splitting field of $E_R(x)$. 
\end{itemize}
\end{proposition}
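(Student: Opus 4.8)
The plan is to count $|\overline{C}_R(\mathbb{F}_q)|$ directly by exponential sums and compare with the Hasse--Weil bound, bypassing Theorem \ref{tt}. First I would record two elementary inputs: since $\gcd(p,p^e+1)=1$, the curve $\overline{C}_R$ has a unique point at infinity, which is $\mathbb{F}_q$-rational, so $|\overline{C}_R(\mathbb{F}_q)|=1+|C_R(\mathbb{F}_q)|$; and for $c\in\mathbb{F}_q$ the equation $y^p-y=c$ has $p$ solutions in $\mathbb{F}_q$ if $\Tr_{q/p}(c)=0$ and none otherwise, because $y\mapsto y^p-y$ is an $\mathbb{F}_p$-linear endomorphism of $\mathbb{F}_q$ with kernel $\mathbb{F}_p$ and image $\Ker\Tr_{q/p}$. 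Putting $Q(x)\coloneqq\Tr_{q/p}(xR(x))$ and expanding $\sum_{\psi\in\mathbb{F}_p^{\vee}}\psi(Q(x))$ over additive characters of $\mathbb{F}_p$, one gets
\[
|\overline{C}_R(\mathbb{F}_q)| \;=\; q+1+\sum_{\psi\in\mathbb{F}_p^{\vee}\setminus\{1\}}S_{\psi},
\qquad
S_{\psi}\coloneqq\sum_{x\in\mathbb{F}_q}\psi\bigl(Q(x)\bigr).
\]

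Because $R$ is additive and $p_0\neq2$, the map $Q$ is a quadratic form on the $\mathbb{F}_p$-vector space $\mathbb{F}_q$, and applying $\Tr_{q/p}$ to \eqref{11} identifies its polar form:
\[
B_Q(x,y)=\Tr_{q/p}\bigl(xR(y)+yR(x)\bigr)=\Tr_{q/p}\bigl(x^{p^e}E_R(y)\bigr).
\]
Since $x\mapsto x^{p^e}$ is a bijection of $\mathbb{F}_q$ and the trace pairing is nondegenerate, the radical of $B_Q$ is exactly $V_R\cap\mathbb{F}_q$; moreover $Q$ vanishes on this radical (a quadratic form which is additive on a subspace is identically zero there when $p_0\neq2$). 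Hence $Q$ descends to a nondegenerate quadratic form $\overline{Q}$ on $\mathbb{F}_q/(V_R\cap\mathbb{F}_q)$, of dimension $n-r$ with $r\coloneqq\dim_{\mathbb{F}_p}(V_R\cap\mathbb{F}_q)\le 2e$, and consequently $|S_{\psi}|=p^{r}\cdot p^{(n-r)/2}=p^{(n+r)/2}$ for every nontrivial $\psi$.

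For (1) $\Rightarrow$ (2): if $\overline{C}_R$ is $\mathbb{F}_q$-maximal or $\mathbb{F}_q$-minimal, then $\bigl|\sum_{\psi}S_{\psi}\bigr|$ equals $2g(\overline{C}_R)\sqrt q=p^{e}(p-1)p^{n/2}$, while the triangle inequality and the bound above give $\bigl|\sum_{\psi}S_{\psi}\bigr|\le(p-1)p^{(n+r)/2}$; comparing exponents forces $r\ge2e$, hence $r=2e$ and $V_R\subseteq\mathbb{F}_q$. For (2) $\Rightarrow$ (1): assume $V_R\subseteq\mathbb{F}_q$, so $r=2e$ and $m\coloneqq n-2e$ is even, which is where the hypothesis that $n$ is even enters. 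Diagonalizing $\overline{Q}$ over $\mathbb{F}_p$ and using $\sum_{y\in\mathbb{F}_p}\psi(cy^2)=-\left(\frac{c}{p}\right)G(\psi)$ together with $G(\psi)^2=\left(\frac{-1}{p}\right)p$ (the case $q=p$ of \eqref{gauss sum square}), one computes $S_{\psi}=\epsilon\,p^{e}\sqrt q$ with a sign $\epsilon\in\{\pm1\}$ that does \emph{not} depend on $\psi$: evenness of $m$ both removes the Gauss-sum factor and makes $\left(\frac{\lambda}{p}\right)^{m}=1$ for all $\lambda\in\mathbb{F}_p^{\times}$. Summing over the $p-1$ nontrivial characters gives $|\overline{C}_R(\mathbb{F}_q)|=q+1+\epsilon\,p^{e}(p-1)\sqrt q=q+1+\epsilon\cdot2g(\overline{C}_R)\sqrt q$, so $\overline{C}_R$ is $\mathbb{F}_q$-maximal if $\epsilon=1$ and $\mathbb{F}_q$-minimal if $\epsilon=-1$.

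The only step that is not purely formal is the identification of the radical of $B_Q$ with $V_R\cap\mathbb{F}_q$; that is precisely what formula \eqref{11} provides, so in the write-up I would isolate this computation at the start. (Alternatively one could deduce (2) $\Rightarrow$ (1) from Theorem \ref{ttb4}, since $\overline{A}\subseteq V_R$ shows that $V_R\subseteq\mathbb{F}_q$ forces $A\subseteq\mathbb{F}_q^{2}$; but one still has to verify Condition \ref{ast}, which needs the same structural input, so the direct count above seems preferable.)
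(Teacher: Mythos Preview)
Your argument is correct. The paper's own proof is entirely by citation: it invokes \cite[Corollary 3.2]{CO}, where the criterion for maximality/minimality is stated as ``$k=2h$'' in their notation, and then unwinds that $k=\dim_{\mathbb{F}_p}(V_R\cap\mathbb{F}_q)$ and $h=e$, so that ``$k=2h$'' is exactly $V_R\subset\mathbb{F}_q$. Your route instead reproves that result from scratch by an elementary exponential-sum count: you recognise $Q(x)=\Tr_{q/p}(xR(x))$ as an $\mathbb{F}_p$-quadratic form, use \eqref{11} to identify its radical with $V_R\cap\mathbb{F}_q$, and then reduce everything to the standard evaluation of Gauss sums of nondegenerate quadratic forms over $\mathbb{F}_p$. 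The payoff is a self-contained argument that stays inside the paper and makes transparent exactly where the hypotheses $p_0\neq2$ (to kill $Q$ on the radical via $2Q(y)=0$) and $n$ even (to make $m=n-2e$ even, hence $G(\psi)^m$ and $\bigl(\tfrac{\lambda}{p}\bigr)^m$ independent of $\psi$) enter. The cost is that you are essentially redoing a special case of the computation behind \cite{CO}; the paper's citation is shorter but opaque. One minor remark: in your last parenthetical you suggest deducing (2) $\Rightarrow$ (1) from Theorem~\ref{ttb4}, but that theorem also assumes Condition~\ref{ast}, which is not guaranteed by $V_R\subset\mathbb{F}_q$ alone, so that shortcut would not work in general---your direct count is indeed the cleaner path.
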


\begin{proof}
When $p$ is a prime number (i.e.,\ $p = p_0$),
it follows from \cite[Proposition 13.4]{GV}.
Here we give a proof using the results of
\c Cak\c cak--\"Ozbudak \cite{CO}.
By \cite[Corollary 3.2]{CO},
the curve $\overline{C}_R$ is $\mathbb{F}_{q}$-maximal or $\mathbb{F}_{q}$-minimal
if and only if ``$k = 2h$'' is satisfied in the notation of \cite{CO}.
Here, the value of ``$k$'' in \cite{CO} is equal to
$\dim_{\mathbb{F}_p} (V_R \cap \mathbb{F}_q)$
and the value of ``$h$'' in \cite{CO}
is equal to $e$.
Therefore, the condition ``$k = 2h$'' in \cite{CO}
amounts to $\dim_{\mathbb{F}_p} (V_R \cap \mathbb{F}_q) = 2e$.
This is equivalent to $V_R \subset \mathbb{F}_q$
since $V_R$ is an $\mathbb{F}_p$-vector space of dimension $2e$.
\end{proof}

\begin{proposition}
Assume $p_0\neq 2$.
Let $f \geq 1$ be a positive integer such that $\mathbb{F}_q \subset \mathbb{F}_{p^{2f}}$.
If the curve $\overline{C}_R$ is 
$\mathbb{F}_{p^{2f}}$-maximal, then we have $e \leq f$. 
\end{proposition}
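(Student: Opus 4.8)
The statement concerns the genus of $\overline{C}_R$, which is $p^e(p-1)/2$, and an $\mathbb{F}_{p^{2f}}$-maximality hypothesis. By Lemma \ref{maximal minimal}, if $\overline{C}_R$ is $\mathbb{F}_{p^{2f}}$-maximal, then every eigenvalue of $\mathrm{Fr}_{p^{2f}}^\ast$ on $H^1(\overline{C}_R)$ equals $-p^f$. The plan is to combine this with the explicit formula for the $L$-polynomial (Theorem \ref{tt}), which gives the eigenvalues as the numbers $\tau_\xi$, indexed by pairs $(\psi,\xi)$ with $\psi \in \mathbb{F}_p^\vee\setminus\{1\}$ and $\xi \in A_\psi^\vee$, where $|A_\psi^\vee| = |\overline{A}| = p^e$ and $|\mathbb{F}_p^\vee\setminus\{1\}| = p-1$. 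Counting, there are exactly $(p-1)p^e = 2g(\overline{C}_R)$ such pairs, matching the number of eigenvalues.

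First I would reduce to working over a field $\mathbb{F}_{q'}$ large enough to contain both $\mathbb{F}_{p^{2f}}$ and a splitting field of $F_A(x)$, so that Theorem \ref{tt} applies; since maximality over $\mathbb{F}_{p^{2f}}$ forces a prescribed behaviour of the $\tau_\xi$ after raising to an appropriate power, no information is lost. The key point is that the formula $\tau_\xi = \psi_q(-4^{-1}c_A^{-1}\eta^2)(c_A/q)G(\psi_q)$ exhibits $\tau_\xi$ as a product of a $p_0$-th root of unity $\psi_q(-4^{-1}c_A^{-1}\eta^2)$ and a fixed quantity independent of $\xi$ (once $\psi$ is fixed). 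The $\mathbb{F}_{p^{2f}}$-maximality condition, via Theorem \ref{tt}(2), says $\tau_\xi^{\,2f/n} \cdot(\text{correction})$ behaves like $-p^f$ for all such pairs; in particular the $p_0$-th-root-of-unity factors $\psi_q(-4^{-1}c_A^{-1}\eta^2)$, as $\xi$ ranges over $A_\psi^\vee$, must all coincide up to the relevant power. Because $p_0 \ne 2$ and the power $2f$ is even, raising a primitive $p_0$-th root of unity to any power coprime to $p_0$ is again primitive, so I expect the maximality condition to force $\eta^2$ to take only finitely many values modulo the kernel of $\psi_q$ — equivalently, the map $\xi \mapsto \eta$ (or rather $\xi \mapsto \eta^2 \bmod \Tr$) has small image.

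Next I would translate this restriction on the $\eta$'s into a statement about the linearized polynomial $a(x)$ from \eqref{af} and hence about $\overline{A}$ and $V_R$. Recall $\xi' = \xi\circ\phi_{\overline{A}}\circ a$ and $\xi'(x) = \psi_q(\eta x)$; as $\xi$ runs over $A_\psi^\vee$, the element $\eta$ runs over a coset-like set governed by $\Ker a$, and the values $\Tr_{q/p}(\eta^2 \cdot \text{const})$ encode a quadratic-form condition on $\mathbb{F}_q$ modulo the annihilator of $\Ker a$. The crucial combinatorial input is that $\overline{A} = \Ker F_A$ has $\mathbb{F}_p$-dimension $e$, so $\Ker a$ has $\mathbb{F}_p$-dimension $n - e$ (from $a(F_A(x)) = x^q - x$), and a maximality-forced collapse of the quadratic values would give an inequality relating $e$ to $n - e$; but the sharper bound $e \le f$ should come from pairing this with the $\mathbb{F}_{p^{2f}}$-maximality over the \emph{specific} field of size $p^{2f}$ rather than $\mathbb{F}_q$, using Proposition \ref{split} (or its proof via \cite{CO}) applied over $\mathbb{F}_{p^{2f}}$: maximality there forces $V_R \subset \mathbb{F}_{p^{2f}}$, i.e.\ $\dim_{\mathbb{F}_p}(V_R) = 2e \le 2f$, hence $e \le f$.

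The main obstacle I anticipate is the bookkeeping in the reduction step: Proposition \ref{split} and the cited \cite{CO} result are phrased over a field $\mathbb{F}_q$ with $q = p^n$ and $n$ even, whereas here the relevant field is $\mathbb{F}_{p^{2f}}$ with $2f$ even automatically, so one must check that the hypotheses of \cite[Corollary 3.2]{CO} are genuinely met and that ``$V_R \subset \mathbb{F}_{p^{2f}}$'' is the correct reading of the condition ``$k = 2h$'' in that notation. Once that is in place, $\dim_{\mathbb{F}_p} V_R = 2e$ (stated right after \eqref{11}) combined with $V_R \subset \mathbb{F}_{p^{2f}}$ gives $2e \le [\mathbb{F}_{p^{2f}} : \mathbb{F}_p] = 2f$, hence $e \le f$. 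The only subtlety is ensuring that $E_R$ is separable (it is, being a separable $\mathbb{F}_p$-linearized polynomial of degree $p^{2e}$, as noted after \eqref{11}) so that $|V_R| = p^{2e}$ exactly and the dimension count is tight.
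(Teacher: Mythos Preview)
Your proposal eventually lands on the correct argument, which is exactly the paper's: apply Proposition~\ref{split} over the field $\mathbb{F}_{p^{2f}}$ (the exponent $2f$ is automatically even) to conclude $V_R \subset \mathbb{F}_{p^{2f}}$, and then use that $V_R$ is an $\mathbb{F}_p$-vector space of dimension $2e$ to get $2e \le [\mathbb{F}_{p^{2f}}:\mathbb{F}_p] = 2f$, hence $e \le f$. That is the whole proof.

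The first two paragraphs of your plan, however, are an unnecessary detour. None of the machinery you set up there---the explicit formula for $\tau_\xi$ from Theorem~\ref{tt}, the maximal abelian subgroup $A$, the polynomial $a(x)$, the analysis of $\Ker a$ and of the values $\Tr_{q/p}(\eta^2 \cdot \text{const})$---is used in the argument you actually give at the end. You should drop all of it: the proof does not require choosing a field large enough for $A \subset \mathbb{F}_q^2$, does not require the formula $\tau_\xi = \psi_q(-4^{-1}c_A^{-1}\eta^2)\left(\frac{c_A}{q}\right)G(\psi_q)$, and does not need any quadratic-form bookkeeping. The only inputs are Proposition~\ref{split} (which in turn rests on \cite[Corollary~3.2]{CO}) and the fact, stated just after \eqref{fr}, that $E_R$ is separable of degree $p^{2e}$ so that $\dim_{\mathbb{F}_p} V_R = 2e$.
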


\begin{proof}
By Proposition \ref{split},
$V_R$ is an $\mathbb{F}_p$-vector space of dimension $2e$.
Since $V_R \subset \mathbb{F}_q$,
we have $2e \leq [\mathbb{F}_q : \mathbb{F}_p] \leq 2f$.
Thus $e \leq f$. 
\end{proof}

In the following proposition, we do not assume $p_0 \neq 2$.

\begin{proposition}\label{pp}
Assume $e =f$ and $q = p^{2f}$.
Namely, let
\[
  R(x)=\sum_{i=0}^f a_i x^{p^i} \in 
\mathbb{F}_{p^{2f}}[x]
  \qquad (a_f \neq 0)
\] 
be an $\mathbb{F}_p$-linearized polynomial of degree $p^f$ over $\mathbb{F}_{2f}$.
The following conditions are equivalent. 
\begin{itemize}
\item[{\rm (1)}] 
The curve $\overline{C}_{R}$ is $\mathbb{F}_{p^{2f}}$-maximal.
\item[{\rm (2)}] 
We have $\Tr_{p^{2f}/p}(xR(x))=0$ for every 
$x \in \mathbb{F}_{p^{2f}}$.
\item[{\rm (3)}] We have 
$a_0=\cdots=a_{f-1}=0$ and 
$a_f^{p^f}+a_f=0$.
\end{itemize}
\end{proposition}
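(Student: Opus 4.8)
The plan is to prove $(1)\Leftrightarrow(2)$ by a direct point count and $(2)\Leftrightarrow(3)$ by manipulating trace polynomials, keeping both arguments uniform in the characteristic (this is unavoidable here, since we do not assume $p_0\neq 2$).

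For $(1)\Leftrightarrow(2)$ I would argue as follows. The affine plane curve $y^p-y=xR(x)$ is smooth, and since $xR(x)$ has a pole of order $p^f+1$ at $x=\infty$ with $\gcd(p^f+1,p)=1$, the $x$-coordinate map $\overline{C}_R\to\mathbb{P}^1$ is totally ramified over $\infty$ with a single $\mathbb{F}_{p^{2f}}$-rational place above it; hence $|\overline{C}_R(\mathbb{F}_{p^{2f}})|=1+|C_R(\mathbb{F}_{p^{2f}})|$. Because the Artin--Schreier fiber $\{y\in\mathbb{F}_{p^{2f}}\mid y^p-y=c\}$ has $p$ elements when $\Tr_{p^{2f}/p}(c)=0$ and is empty otherwise, $|C_R(\mathbb{F}_{p^{2f}})|=p\cdot N$ with $N\coloneqq\#\{x\in\mathbb{F}_{p^{2f}}\mid\Tr_{p^{2f}/p}(xR(x))=0\}$. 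Since $g(\overline{C}_R)=p^f(p-1)/2$ and $q=p^{2f}$, the Hasse--Weil upper bound equals $q+1+2g\sqrt q=p^{2f+1}+1$, while $N\le p^{2f}$ forces $|\overline{C}_R(\mathbb{F}_{p^{2f}})|=1+pN\le p^{2f+1}+1$, with equality exactly when $N=p^{2f}=|\mathbb{F}_{p^{2f}}|$. So $(1)$ holds iff $\Tr_{p^{2f}/p}(xR(x))=0$ for every $x\in\mathbb{F}_{p^{2f}}$, i.e.\ iff $(2)$.

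For $(2)\Leftrightarrow(3)$: the direction $(3)\Rightarrow(2)$ is a one-line computation, since $R(x)=a_fx^{p^f}$ with $a_f^{p^f}=-a_f$ gives $\Tr_{p^{2f}/p}(xR(x))=\Tr_{p^f/p}\bigl((a_f+a_f^{p^f})x^{p^f+1}\bigr)=0$. For $(2)\Rightarrow(3)$ I would set $Q(x)\coloneqq\Tr_{p^{2f}/p}(xR(x))$ and assume $Q\equiv 0$ on $\mathbb{F}_{p^{2f}}$. As $R$ is additive, the polar form $B(x,y)\coloneqq Q(x+y)-Q(x)-Q(y)=\Tr_{p^{2f}/p}(xR(y)+yR(x))$ also vanishes; applying a suitable Frobenius power to each summand of $\Tr_{p^{2f}/p}(yR(x))$ rewrites it as $\Tr_{p^{2f}/p}\bigl(x\widetilde R(y)\bigr)$ with $\widetilde R(y)\coloneqq\sum_{i=0}^f a_i^{p^{2f-i}}y^{p^{2f-i}}$, so nondegeneracy of the trace pairing on $\mathbb{F}_{p^{2f}}$ forces $R(y)+\widetilde R(y)=0$ for all $y\in\mathbb{F}_{p^{2f}}$. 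Reducing the $p$-polynomial $R+\widetilde R$ modulo $y^{p^{2f}}-y$ to degree $<p^{2f}$ and comparing the coefficients of $y^{p^j}$ $(0\le j\le 2f-1)$ then yields $a_j=0$ for $1\le j\le f-1$ and $a_f^{p^f}+a_f=0$. At that point $R(x)=a_0x+a_fx^{p^f}$ with $a_f^{p^f}=-a_f$, so the term $a_fx^{p^f+1}$ contributes nothing to $Q$ by the $(3)\Rightarrow(2)$ computation, leaving $\Tr_{p^{2f}/p}(a_0x^2)\equiv 0$; expanding this as a polynomial of degree $<p^{2f}$ modulo $x^{p^{2f}}-x$ and reading off a single coefficient forces $a_0=0$, which is $(3)$.

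The one step that will need care is $(2)\Rightarrow(3)$ in characteristic $2$: there the coefficient of $y$ in $R+\widetilde R$ only encodes $2a_0=0$, so the polar form alone does not detect $a_0$, which is exactly why the extra argument with $\Tr_{p^{2f}/p}(a_0x^2)$ is required. Apart from that, the only thing to be attentive to is the bookkeeping of which monomials $y^{p^j}$ (resp.\ $x^{p^j}$) actually acquire a nonzero contribution after the reductions modulo $y^{p^{2f}}-y$ (resp.\ $x^{p^{2f}}-x$).
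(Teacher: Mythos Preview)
Your proposal is correct and follows essentially the same line as the paper's proof: the point-count argument for $(1)\Leftrightarrow(2)$ and the direct check for $(3)\Rightarrow(2)$ are identical, and for $(2)\Rightarrow(3)$ both proofs polarize $Q$, invoke nondegeneracy of the trace pairing, compare coefficients of a linearized polynomial, and then handle $a_0$ separately via $\Tr_{p^{2f}/p}(a_0x^2)\equiv 0$.

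The only differences are cosmetic. The paper phrases the key step as $\Tr_{p^{2f}/p}(x^{p^f}E_R(y))=0$ (using the identity $f_R(x,y)^p-f_R(x,y)=-x^{p^f}E_R(y)+xR(y)+yR(x)$ already set up earlier), deduces $E_R(y)=a_f^{p^f}(y^{p^{2f}}-y)$, and reads off the conditions from that; your $\widetilde R$ satisfies $R(y)+\widetilde R(y)=E_R(y)^{p^{-f}}$ as functions on $\mathbb{F}_{p^{2f}}$, so the two computations are Frobenius twists of each other. For the final step in characteristic~$2$ the paper avoids your coefficient bookkeeping by noting that $x\mapsto x^2$ is a bijection of $\mathbb{F}_{p^{2f}}$, so $\Tr_{p^{2f}/p}(a_0x^2)\equiv 0$ immediately gives $\Tr_{p^{2f}/p}(a_0z)\equiv 0$ and hence $a_0=0$ by nondegeneracy; this is a bit cleaner than reducing $\sum_j a_0^{p^j}x^{2p^j}$ modulo $x^{p^{2f}}-x$ (where, for $p=2$, one term wraps around).
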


\begin{proof}
This result is known (cf.\ \cite[Proposition 2.1]{OS}).
Here we give a different proof.

First we show the equivalence (1) $\iff$ (2).
Since $g(\overline{C}_R)=p^f(p-1)/2$, 
the curve $\overline{C}_R$ is 
$\mathbb{F}_{p^{2f}}$-maximal if and only if 
\[ |\overline{C}_R(\mathbb{F}_{p^{2f}})| = p^{2f} + 1 + p^{2f}(p-1) = p^{2f+1} + 1.
\]
Since the complement $\overline{C}_R \backslash C_R$ consists of
an $\mathbb{F}_{p^{2f}}$-rational point,
this condition is satisfied if and only if, 
for every $x \in \mathbb{F}_{p^{2f}}$, 
there exists a solution
$z \in \mathbb{F}_{p^{2f}}$ of  
$z^p-z=xR(x)$.
The latter condition holds if and only if
$\Tr_{p^{2f}/p}(xR(x))=0$ for every $x \in \mathbb{F}_{p^{2f}}$.
Hence the equivalence (1) $\iff$ (2) follows.

Next, we show (2) $\Rightarrow$ (3).
Assume that (2) holds.
First we show $V_R=\mathbb{F}_{p^{2f}}$.
For every $x,y\in \mathbb{F}_{p^{2f}}$, we have
\begin{align*}
& \Tr_{p^{2f}/p}(xR(y)+y R(x)) \\
&= \Tr_{p^{2f}/p}((x+y)R(x+y))-\Tr_{p^{2f}/p}(xR(x)) -\Tr_{p^{2f}/p}(yR(y)) \\
&= 0. 
\end{align*}
Therefore, from (\ref{11}), we obtain 
\begin{align*}
\Tr_{p^{2f}/p}(x^{p^f} E_R(y))
&= -\Tr_{p^{2f}/p}(f_R(x,y)^p-f_R(x,y))+
\Tr_{p^{2f}/p}(xR(y)+y R(x)) \\
&= 0
\end{align*}
for every $x,y\in \mathbb{F}_{p^{2f}}$.
Since the $\mathbb{F}_p$-bilinear form
\[
T_{p^{2f}/p} \colon \mathbb{F}_{p^{2f}} \times \mathbb{F}_{p^{2f}} \to \mathbb{F}_p;\ (x,y) \mapsto \Tr_{p^{2f}/p}(xy)
\]
is non-degenerate,
we obtain $E_R(y)=0$ for every $y\in \mathbb{F}_{p^{2f}}$.
Hence $\mathbb{F}_{p^{2f}} \subset V_R$.
Since the both sides have the same 
cardinality, we obtain $\mathbb{F}_{p^{2f}}= V_R$. 
Since $E_R(x)$ is a polynomial of degree $p^{2f}$,
we have $E_R(x)=a_f^{p^f}(x^{p^{2f}}-x)$.
From \eqref{er}, we have
\[
E_R(x)=\sum_{i=1}^f a_i^{p^f} x^{p^{f+i}}+2(a_0x)^{p^f}
+\sum_{i=1}^f (a_i x)^{p^{f-i}}. 
\]
Comparing the right hand sides, we obtain (3) when $p_0 \neq 2$.
When $p_0 = 2$, we obtain
$a_1=\cdots=a_{f-1}=0$ and $a_f^{p^f}+a_f=0$. 
It remains to show $a_0=0$. 
For any $x \in \mathbb{F}_{p^{2f}}$, 
we compute 
\begin{align*}
0 &= \Tr_{p^{2f}/p}(xR(x))
= \Tr_{p^{2f}/p}\left(a_fx^{p^f+1}+a_0x^2\right) \\
&=
\Tr_{p^{f}/p}\left( \Tr_{p^{2f}/p^f} \left( a_fx^{p^f+1} \right) \right) +
\Tr_{p^{2f}/p}\left(a_0x^2\right)\\
&=
\Tr_{p^{f}/p}\left(a_f^{p^f}x^{p^f(p^f+1)}+a_f x^{p^f+1}\right)
+\Tr_{p^{2f}/p}(a_0 x^2) \\
&=\Tr_{p^{2f}/p}(a_0 x^2). 
\end{align*}
When $p_0 = 2$, the map 
$\mathbb{F}_{p^{2f}} \to \mathbb{F}_{p^{2f}}; x \mapsto x^2$
is bijective,
and the non-degeneracy of $T_{p^{2f}/p}$
implies $a_0=0$.
Thus we obtain (3) in all characteristics.  

Finally, we show (3) $\Rightarrow$ (2).
Assume that (3) holds.
For every $x \in \mathbb{F}_{p^{2f}}$,
we have
\begin{align*}
\Tr_{p^{2f}/p}(xR(x)) &=
\Tr_{p^{f}/p}\left( \Tr_{p^{2f}/p^f}\left( a_f x^{p^f + 1} \right) \right)
 = \Tr_{p^{f}/p}\left(a_f^{p^f}x^{p^f(p^f+1)}+a_f x^{p^f+1}\right)
 \\
 &= \Tr_{p^{f}/p}\left(-a_f x^{p^f+1}+a_f x^{p^f+1}\right)
  = 0.
\end{align*}
Thus (2) follows.
\end{proof}

\section{Some examples of maximal van der Geer--van der Vlugt curves}
\label{Examples maximal van der Geer--van der Vlugt curves}

In general, Condition \ref{ast} is not easy to check.
Here we give a sufficient condition
which implies Condition \ref{ast}; see Lemma \ref{tt2}.
Applying this lemma, we give explicit
maximal (or minimal) curves in Theorem \ref{214}.

\begin{lemma}\label{tt1}
Assume $p_0 \neq 2$, $q = p^n$, and $p \equiv 1 \pmod n$.
Let $R(x)=\sum_{i=0}^e a_i x^{p^i} \in \mathbb{F}_p[x]$
be an $\mathbb{F}_p$-linearized polynomial of degree $p^e$ over $\mathbb{F}_p$
with $a_e \neq 0$.
We take a primitive 
$n$-th root of unity 
$\zeta \in \mathbb{F}_p$ and 
an element $\alpha \in 
\mathbb{F}_q^{\times}$ with 
$\alpha^{p-1} = \zeta$. 
If integers $k$ and $l$ satisfy 
\[
E_R(\alpha^k)=E_R(\alpha^l)=0,  
\qquad k+l \not\equiv 0 \pmod{n}, 
\]
then we have 
\[
f_R(\alpha^k,\alpha^l)=f_R(\alpha^l,\alpha^k). 
\]
\end{lemma}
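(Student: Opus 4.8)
The plan is to exploit the symmetry that the hypotheses $E_R(\alpha^k)=E_R(\alpha^l)=0$ impose on the defining relation \eqref{11}, together with the explicit shape of $f_R(x,y)-f_R(y,x)$. First I would compute the difference $f_R(x,y)-f_R(y,x)$ directly from \eqref{fr}. Adding and subtracting, the terms $(xR(y))^{p^i}$ and $(yR(x))^{p^i}$ combine into $\sum_{i=0}^{e-1}\bigl((xR(y))^{p^i}-(yR(x))^{p^i}\bigr)$, and the double-sum terms $\sum_{j=0}^{e-i-1}(a_ix^{p^i}y)^{p^j}$ against $\sum_{j=0}^{e-i-1}(a_iy^{p^i}x)^{p^j}$ rearrange; the upshot should be that $f_R(x,y)-f_R(y,x)$ is, up to an overall sign, a sum of Frobenius powers of $xR(y)-yR(x)$ and of terms $a_i(x^{p^i}y-y^{p^i}x)^{p^j}$, i.e.\ an $\mathbb F_p$-linear combination of Frobenius twists of the ``antisymmetric'' quantities that already appear in \eqref{11}. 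Concretely I expect something like $f_R(x,y)-f_R(y,x)=\sum_{i=0}^{e-1}\sum_{j=0}^{e-i-1}\bigl(a_i(x^{p^i}y)^{p^j}-a_i(y^{p^i}x)^{p^j}\bigr)+\sum_{i=0}^{e-1}\bigl((yR(x))^{p^i}-(xR(y))^{p^i}\bigr)$.

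Next, specialize to $x=\alpha^k$, $y=\alpha^l$, where $\alpha^{p-1}=\zeta$ with $\zeta$ a primitive $n$-th root of unity in $\mathbb F_p$. The key point is that $\alpha^{p^j}=\alpha\cdot\alpha^{p^j-1}=\alpha\,\zeta^{(p^j-1)/(p-1)}=\zeta^{1+p+\cdots+p^{j-1}}\alpha$, so every monomial $\alpha^{mp^j}$ equals $\zeta^{\,m(1+p+\cdots+p^{j-1})}\alpha^m$, a root of unity times $\alpha^m$; since $p\equiv 1\pmod n$ we have $1+p+\cdots+p^{j-1}\equiv j\pmod n$, so in fact $\alpha^{mp^j}=\zeta^{jm}\alpha^m$. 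Thus each term $a_i(\alpha^{kp^i}\alpha^l)^{p^j}$ becomes a root-of-unity multiple of $\alpha^{kp^i+l}$, hence (using $kp^i\equiv k\pmod n$ and, importantly, $\alpha^{kp^i+l}$ depends on the exponent $kp^i+l$ only through $\alpha^{k+l}$ up to a unit) a root-of-unity multiple of $\alpha^{k+l}$. The monomials $x^{p^i}y$ and $y^{p^i}x$ both collapse to units times $\alpha^{k+l}$, and similarly every monomial in $xR(y)$ and $yR(x)$ is a unit times $\alpha^{k+l}$. So $f_R(\alpha^k,\alpha^l)-f_R(\alpha^l,\alpha^k)$ is a sum of terms each of which is a scalar in $\overline{\mathbb F_p}$ times a power of $\alpha^{k+l}$ — in fact a single power $\alpha^{k+l}$ times an element of $\mathbb F_p$, because the $a_i$ lie in $\mathbb F_p$ and the $\zeta$-powers lie in $\mathbb F_p$.

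Now I invoke \eqref{11}: since $E_R(\alpha^l)=0$ and $E_R(\alpha^k)=0$, the right-hand side of \eqref{11} applied to $(x,y)=(\alpha^k,\alpha^l)$ reduces to $\alpha^kR(\alpha^l)+\alpha^lR(\alpha^k)$, and applied to $(\alpha^l,\alpha^k)$ it reduces to the same quantity; subtracting, $\bigl(f_R(\alpha^k,\alpha^l)^p-f_R(\alpha^k,\alpha^l)\bigr)-\bigl(f_R(\alpha^l,\alpha^k)^p-f_R(\alpha^l,\alpha^k)\bigr)=0$. Setting $\delta\coloneqq f_R(\alpha^k,\alpha^l)-f_R(\alpha^l,\alpha^k)$, this says $\delta^p=\delta$, i.e.\ $\delta\in\mathbb F_p$. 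Combined with the previous paragraph, $\delta$ is an $\mathbb F_p$-multiple of $\alpha^{k+l}$; write $\delta=c\,\alpha^{k+l}$ with $c\in\mathbb F_p$ (more precisely $\delta$ is a sum of $\mathbb F_p$-multiples of $\alpha^{m}$ for various $m\equiv k+l\pmod{?}$, but one must check these $m$ are all congruent modulo the order of $\alpha$, which follows since each $p^j$-twist only multiplies exponents by powers of $p$ and $p\equiv1\pmod n$ keeps $k+l$ fixed mod $n$, while $\alpha^{p-1}=\zeta$ has order $n$). If $c\neq 0$, then $\alpha^{k+l}=c^{-1}\delta\in\mathbb F_p$, forcing $\alpha^{(k+l)(p-1)}=1$, i.e.\ $\zeta^{k+l}=1$, i.e.\ $k+l\equiv 0\pmod n$, contradicting the hypothesis. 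Hence $c=0$ and $\delta=0$, which is the claim.

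The main obstacle I anticipate is \textbf{bookkeeping of exponents modulo the order of $\alpha$}: one must verify carefully that after the Frobenius twists all surviving monomials are genuinely powers of a single $\alpha^{k+l}$ (not spread over a coset), so that the final ``$\delta\in\mathbb F_p$ and $\delta$ an $\mathbb F_p$-multiple of $\alpha^{k+l}$'' dichotomy forces $\delta=0$. This is where the hypothesis $p\equiv1\pmod n$ is essential — it guarantees $p^i\equiv1$ and $1+p+\cdots+p^{j-1}\equiv j\pmod n$, so every twist of $\alpha^{k+l}$ is again a root-of-unity (in $\mathbb F_p$) multiple of $\alpha^{k+l}$ — and it must be used uniformly across all the terms. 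A secondary subtlety is the precise sign/coefficient in the formula for $f_R(x,y)-f_R(y,x)$, but since we only need that $\delta$ is an $\mathbb F_p$-linear combination of twists of $\alpha^{k+l}$ (and separately that $\delta\in\mathbb F_p$), the exact constants are irrelevant.
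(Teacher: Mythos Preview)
Your argument is correct but takes a genuinely different route from the paper's. The paper computes $f_R(\alpha^k,\alpha^l)\,\alpha^{-(k+l)}$ explicitly: using $\alpha^{p^i}=\zeta^i\alpha$ and summing the resulting geometric series in $\zeta^{k+l}$ (this is where $k+l\not\equiv 0\pmod n$ enters, to ensure $\zeta^{k+l}\neq 1$), it obtains a closed-form expression with two sums, one manifestly symmetric in $k,l$ and one that vanishes because $E_R(\alpha^l)=0$. Your approach instead observes that every term of $f_R(\alpha^k,\alpha^l)$ is an $\mathbb{F}_p$-multiple of the single monomial $\alpha^{k+l}$ (so $\delta=c\,\alpha^{k+l}$ with $c\in\mathbb{F}_p$), then uses the functional equation \eqref{11} with $E_R(\alpha^k)=E_R(\alpha^l)=0$ to force $\delta\in\mathbb{F}_p$, and finally uses $k+l\not\equiv 0\pmod n$ to rule out $\alpha^{k+l}\in\mathbb{F}_p$. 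Your route is more conceptual and sidesteps the explicit series manipulation; the paper's route is more computational but yields an actual formula for $f_R(\alpha^k,\alpha^l)$. Your hedging about ``various $m$'' is unnecessary: since $(\alpha^s)^{p^j}=\zeta^{sj}\alpha^s$ and every monomial in $f_R(\alpha^k,\alpha^l)$ is a Frobenius power of an $\mathbb{F}_p$-multiple of $\alpha^{k+l}$, all terms land on the single power $\alpha^{k+l}$, so the bookkeeping is clean.
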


\begin{proof}
Since $a_i \in \mathbb{F}_p$, we have $a_i^p = a_i$.
We have $\alpha^{p^i}=\zeta^i \alpha$ for 
an integer $i$. 
By \eqref{fr} and $k+l \not\equiv 0 \pmod{n}$, we have 
\begin{align*}
f_R(\alpha^k,\alpha^l)\alpha^{-(k+l)}
&=-\sum_{i=0}^{e-1}
\left(\sum_{j=0}^{e-i-1}a_i \zeta^{(i+j)k+jl}
+\sum_{h=0}^e a_h \zeta^{ik + (h+i)l}\right) \\
&=
-\sum_{i=0}^{e-1} a_i\zeta^{ik}
\frac{1-\zeta^{(e-i)(k+l)}}{1-\zeta^{k+l}}
-\sum_{h=0}^e a_h\zeta^{hl}
\frac{1-\zeta^{e(k+l)}}{1-\zeta^{k+l}} \\
&=
-\frac{1}{1-\zeta^{k+l}}\left(\sum_{i=0}^{e} a_i(\zeta^{ik}+\zeta^{il})
-\zeta^{e(k+l)} \sum_{i=0}^e a_i
(\zeta^{il}+\zeta^{-il})\right). 
\end{align*}
Since $E_R(\alpha^l) = 0$, we have
\[
0=E_R(\alpha^l)\alpha^{-l}=\zeta^{el}
\sum_{i=0}^e a_i
(\zeta^{il}+\zeta^{-il}) 
\]
by \eqref{er}.
Hence, we have
\[
f_R(\alpha^k,\alpha^l)\alpha^{-(k+l)}
= -\frac{1}{1-\zeta^{k+l}} \sum_{i=0}^{e} a_i(\zeta^{ik}+\zeta^{il}).
\]
Since the right hand side is symmetric in $k$ and $l$, the claim follows.
\end{proof}

\begin{lemma}\label{tt20}
Assume $p_0 \neq 2$, $q = p^n$, $p \equiv 1 \pmod n$
and $e \geq 1$. 
Let $\zeta \in \mathbb{F}_p$ 
be a primitive $n$-th root of unity. 
We take an element 
$\alpha \in \mathbb{F}_q^{\times}$ with
$\alpha^{p-1} = \zeta$. 
We define 
\[
f_k(x) \coloneqq x^p-\zeta^k x \in \mathbb{F}_p[x]. 
\]
Assume that there exist different integers 
$k_1,\ldots,k_e \in \{0,1,\ldots,n-1\}$
such that 
\[
F_A(x)
=f_{k_1} \circ \cdots \circ f_{k_e}(x). 
\]
\begin{itemize}
\item[{\rm (1)}] The $\mathbb{F}_p$-vector space 
$\mathbb{F}_q$ has a basis 
$\{1,\alpha,\ldots,\alpha^{n-1}\}$. 
\item[{\rm (2)}] 
We have
$\overline{A} = \bigoplus_{i=1}^{e}\mathbb{F}_{p} \alpha^{k_i} \subset
\mathbb{F}_q$ and $A \subset \mathbb{F}_q^2$. 
Let
\[ \mathcal{L} \coloneqq \{0,1,\ldots,n-1\} \setminus 
\{k_1,\ldots,k_e\}. \]
Let $a(x)$ be as in \eqref{af}. 
Then, 
$\Ker a$ is the $\mathbb{F}_p$-vector space 
with a basis 
$\{\alpha^l \mid l \in \mathcal{L}\}$. 
\end{itemize}
\end{lemma}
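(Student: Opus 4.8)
The plan is to use the Frobenius $\sigma\colon\mathbb{F}_q\to\mathbb{F}_q,\ x\mapsto x^p$, for which $\alpha$ furnishes an eigenbasis. First note that such an $\alpha$ exists: $\mathbb{F}_q^{\times}$ is cyclic of order $p^n-1$, the image of its $(p-1)$-st power map has order $(p^n-1)/(p-1)=1+p+\cdots+p^{n-1}$, and this is divisible by $n$ since $p\equiv 1\pmod n$, so $\zeta$ (which has order $n$) lies in the image. For (1): since $\sigma(\alpha^j)=\alpha^{jp}=(\alpha^{p-1}\alpha)^j=\zeta^j\alpha^j$, the element $\alpha^j$ is a $\sigma$-eigenvector with eigenvalue $\zeta^j$, and because $\zeta$ is a primitive $n$-th root of unity the eigenvalues $\zeta^0,\ldots,\zeta^{n-1}$ are pairwise distinct; hence $1,\alpha,\ldots,\alpha^{n-1}$ are $\mathbb{F}_p$-linearly independent and, by $\dim_{\mathbb{F}_p}\mathbb{F}_q=n$, form an $\mathbb{F}_p$-basis.

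For (2), I would first record two facts about the polynomials $f_k$. A direct computation using $\zeta^p=\zeta$ gives $f_j\circ f_k=f_k\circ f_j$ for all $j,k$ (both sides equal $x^{p^2}-(\zeta^j+\zeta^k)x^p+\zeta^{j+k}x$), so any composite of the $f_k$'s is independent of the order of its factors. Moreover $f_k(\alpha^k)=(\alpha^p)^k-\zeta^k\alpha^k=\zeta^k\alpha^k-\zeta^k\alpha^k=0$, while $f_k$ is $\mathbb{F}_p$-linearized, separable (its derivative is the nonzero constant $-\zeta^k$) of degree $p$; therefore $\Ker f_k=\mathbb{F}_p\alpha^k$.

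Next I would identify $a(x)$ explicitly. The composite $g(x)\coloneqq f_0\circ f_1\circ\cdots\circ f_{n-1}(x)$ is a monic $\mathbb{F}_p$-linearized polynomial of degree $p^n=q$; by commutativity and additivity each $\alpha^m$ $(0\le m\le n-1)$ lies in $\Ker g$, and these span $\mathbb{F}_q$ by (1), so $\mathbb{F}_q\subseteq\Ker g$; comparing $|\mathbb{F}_q|=q$ with $\deg g=q$ forces $\Ker g=\mathbb{F}_q$ and hence $g(x)=\prod_{c\in\mathbb{F}_q}(x-c)=x^q-x$. Writing $\mathcal{L}=\{l_1,\ldots,l_{n-e}\}$ and $a_0(x)\coloneqq f_{l_1}\circ\cdots\circ f_{l_{n-e}}(x)$, commutativity gives $a_0\circ F_A=F_A\circ a_0=g=x^q-x$, so $a=a_0$ by the uniqueness in \eqref{af}; in particular $\deg a=p^{n-e}$.

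It remains to read off the kernels. By commutativity $\alpha^{k_i}\in\Ker f_{k_i}\subseteq\Ker F_A=\overline{A}$ for each $i$, so $\bigoplus_{i=1}^e\mathbb{F}_p\alpha^{k_i}\subseteq\overline{A}$; the left-hand side has $\mathbb{F}_p$-dimension $e$ by (1) and $\overline{A}$ has $\mathbb{F}_p$-dimension $e$ (Subsection \ref{Heisenberg group}), so they coincide, and in particular $\overline{A}\subseteq\mathbb{F}_q$. Likewise $\bigoplus_{l\in\mathcal{L}}\mathbb{F}_p\alpha^l\subseteq\Ker a$, and since this subspace has $p^{n-e}$ elements while $\Ker a$ has at most $\deg a=p^{n-e}$ roots, equality holds. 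Finally, $\overline{A}\subseteq\mathbb{F}_q$ says that $\mathbb{F}_q$ contains every root of $F_A$, i.e.\ it is a splitting field of $F_A(x)$, so $A\subset\mathbb{F}_q^2$ by \cite[Lemma 2.6]{TT}. The one point requiring care is the bookkeeping of linearized-polynomial composition — the commutativity of the $f_k$ and the resulting clean factorizations $x^q-x=f_0\circ\cdots\circ f_{n-1}$ and $a=f_{l_1}\circ\cdots\circ f_{l_{n-e}}$ — after which every remaining assertion is a dimension or cardinality count.
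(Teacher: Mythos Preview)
Your proof is correct and follows essentially the same approach as the paper's: both hinge on the commutativity of the $f_k$, the identification of $a(x)$ as the composite of the complementary $f_l$'s via the factorization $x^q-x=f_0\circ\cdots\circ f_{n-1}$, and dimension/cardinality counts for $\overline{A}$ and $\Ker a$. The only cosmetic difference is that you phrase part~(1) as an eigenvector argument for the Frobenius while the paper spells out the underlying Vandermonde determinant.
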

\begin{proof}
(1) It suffices to show that 
 $\{1,\alpha,\ldots,\alpha^{n-1}\}$ is
an $\mathbb{F}_p$-basis of $\mathbb{F}_q$.
 Assume 
 \[
 \sum_{i=0}^{n-1} c_i \alpha^i=0
 \qquad (c_i \in \mathbb{F}_{p}). 
 \]
 Taking the $p^{j}$-th power for $0 \leq j \leq n-1$, we have
 \[
 \sum_{i=0}^{n-1} \zeta^{ij} c_i \alpha^i=0 \qquad 
 (0 \leq j \leq n-1). 
 \]
 Since the determinant of the Vandermonde matrix 
 $(\zeta^{ij})_{0 \leq i,j \leq n-1}$
 is $\prod_{0 \leq i<j \leq n-1}
 (\zeta^j-\zeta^i) \neq 0$, we obtain 
 $c_i \alpha^i = 0$ for every $i$. 
Hence $c_i = 0$ for every $i$. 

(2) 
Since 
the polynomial $f_k(x)$ is $\mathbb{F}_p$-linearized
for any positive integer $k$, so is $F_A(x)$. 
Hence $\overline{A}=\Ker F_A$ is an $\mathbb{F}_{p}$-vector space. 
Since 
$f_{k_i} \circ f_{k_j}=f_{k_j} \circ f_{k_i}$
for any $1 \leq i < j \leq e$, 
we have 
 $\alpha^{k_1},\ldots,
 \alpha^{k_e} \in \overline{A}$. 
 From (1), it follows that
 \[
   \bigoplus_{i=1}^{e}\mathbb{F}_{p} \alpha^{k_i} \subset \overline{A}.
 \]
By comparing the orders of the both sides,
this inclusion is equal. 
In particular, $\overline{A} \subset 
\mathbb{F}_q$ since $\alpha \in \mathbb{F}_q$. 
According to \cite[Lemma 2.6]{TT}, we obtain 
$A \subset \mathbb{F}_q^2$. 
 
 We show the latter claim. 
We write $\mathcal{L}=\{l_1,\ldots,l_{n-e}\}$. 
Let 
\[
  b(x) \coloneqq f_{l_1} \circ \cdots \circ f_{l_{n-e}}(x).
\]
We show $a(x)=b(x)$. 
Let $f(x) \coloneqq F_A \circ b(x)$.
Since $f(x)$ is $\mathbb{F}_p$-linearized,  
$\Ker f$ is an $\mathbb{F}_{p}$-vector space. 
We have $f(\alpha^i)=0$ for every integer 
$0 \leq i \leq n-1$.  
By (1), we have 
$\mathbb{F}_q=\Ker f$. Since 
$f(x)$ is a monic polynomial of degree 
$q$, we have $f(x) = x^q - x$. 
Thus, $F_A \circ a(x)=F_A \circ b(x)$ by \eqref{af}.
Since $F_A(x)$ is an additive polynomial, we have
$F_A(a(x) - b(x)) = 0$.
Hence $a(x)=b(x)$ by $F_A(x) \neq 0$. 

Since $a(x)$ is $\mathbb{F}_p$-linearized, $\Ker a$ is an 
$\mathbb{F}_{p}$-vector space. 
Clearly $\alpha^l \in \Ker b=\Ker a$ for any 
$l \in \mathcal{L}$. 
By considering the degree of $a(x)$, we obtain
\[
  \bigoplus_{l \in 
\mathcal{L}} \mathbb{F}_{p} \alpha^l = \Ker a,
\]
i.e., $\{\alpha^l \mid l \in \mathcal{L}\}$ is an $\mathbb{F}_{p}$-basis of $\Ker a$.
\end{proof}

Recall that $R(x)=\sum_{i=0}^e a_i x^{p^i} \in \mathbb{F}_q[x]$ is an $\mathbb{F}_p$-linearized polynomial of degree $p^e$, and $a_e \in \mathbb{F}_q^{\times}$ is the coefficient of $x^{p^e}$ in $R(x)$.

\begin{lemma}\label{tt2}
Let the notation and the assumptions be as in Lemma \ref{tt20}. Further, we assume 
$a_e \in \mathbb{F}_p$. 
We put 
\[
\mathcal{M} \coloneqq \{ k_i+k_j \mid 1 \leq i,j  \leq e\}.
\]
The set $\mathcal{M}$ does not contain $0$ nor 
$n$ 
if and only if Condition \ref{ast} is satisfied.
\end{lemma}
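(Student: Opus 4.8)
The plan is to reduce Condition \ref{ast} to a concrete statement about the symmetric bilinear form $\la \mapsto \Tr_{q/p}(c_A^{-1}\la^2)$ restricted to the annihilator of $\Ker a$, and then to compute everything explicitly in the basis $\{1,\alpha,\ldots,\alpha^{n-1}\}$ of $\mathbb{F}_q$ provided by Lemma \ref{tt20}(1). By Lemma \ref{tt20}(2), $\Ker a = \bigoplus_{l \in \mathcal{L}}\mathbb{F}_p\alpha^l$, so the $\la \in \mathbb{F}_q$ with $\Tr_{q/p}(\la t)=0$ for all $t \in \Ker a$ form the space orthogonal to $\{\alpha^l : l \in \mathcal{L}\}$ under the trace pairing. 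The first step is to identify this orthogonal space: since $\Tr_{q/p}(\alpha^i) = \sum_{j=0}^{n-1}\zeta^{ij}\alpha$ (using $\alpha^{p^j}=\zeta^j\alpha$), which is $n\alpha$ if $n \mid i$ and $0$ otherwise, one finds $\Tr_{q/p}(\alpha^{i}) \ne 0$ iff $i \equiv 0 \pmod n$. More generally $\Tr_{q/p}(\alpha^{i}\alpha^{j}) = \Tr_{q/p}(\alpha^{i+j})$ is nonzero exactly when $i+j \equiv 0 \pmod n$. Hence the trace pairing, in the basis $\{\alpha^i\}$, is essentially the "anti-diagonal" pairing pairing $\alpha^i$ with $\alpha^{n-i}$ (with $\alpha^0$ paired to itself), so the annihilator of $\bigoplus_{l\in\mathcal L}\mathbb F_p\alpha^l$ is $\bigoplus_{m}\mathbb F_p\alpha^m$ where $m$ ranges over the complement of $\{\,-l \bmod n : l \in \mathcal{L}\,\} = \{\,-l \bmod n : l \notin \{k_1,\ldots,k_e\}\,\}$, i.e.\ over $\{\,-k_i \bmod n : 1 \le i \le e\,\}$.

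The second step is to compute $\Tr_{q/p}(c_A^{-1}\la^2)$ for $\la$ in this annihilator. Write $\la = \sum_{i=1}^{e} c_i \alpha^{-k_i}$ with $c_i \in \mathbb{F}_p$ (indices mod $n$); then $\la^2 = \sum_{i,j} c_ic_j \alpha^{-(k_i+k_j)}$, and by the computation above $\Tr_{q/p}(\alpha^{-(k_i+k_j)}) \ne 0$ precisely when $k_i + k_j \equiv 0 \pmod n$. Meanwhile $c_A = (-1)^e a_e/(2b_0) \in \mathbb{F}_q$, and by Lemma \ref{ttl}-type reasoning (or directly, using $a_e \in \mathbb{F}_p$ and the structure of $\overline{A}$) the relevant scalar $c_A^{-1}$ can be absorbed; the key point is that $\Tr_{q/p}(c_A^{-1}\la^2)$ is a quadratic form in $(c_1,\ldots,c_e)$ whose "support" is exactly governed by which $k_i+k_j$ lie in $n\mathbb{Z}$. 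Thus Condition \ref{ast} — that $\Tr_{q/p}(c_A^{-1}\la^2)=0$ for every such $\la$ — holds if and only if no pair $(i,j)$ contributes, i.e.\ if and only if $k_i + k_j \not\equiv 0 \pmod n$ for all $1 \le i,j \le e$. Since each $k_i \in \{0,1,\ldots,n-1\}$, the congruence $k_i+k_j \equiv 0 \pmod n$ means $k_i+k_j \in \{0,n\}$; in other words $\mathcal{M} \cap \{0,n\} = \emptyset$, which is precisely the asserted criterion.

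A point requiring care is the role of the cross terms and of $c_A^{-1}$: one must check that when some $k_i+k_j \equiv 0 \pmod n$ the corresponding quadratic form is genuinely nonzero on $\mathbb{F}_q$ (so Condition \ref{ast} genuinely fails), rather than accidentally vanishing after multiplication by $c_A^{-1}$ and taking the trace. Here I would use that $c_A^{-1} = (-1)^e 2 b_0 a_e^{-1}$ with $b_0 = \prod_{i}(-\zeta^{k_i}\alpha)$-type product lying in $\mathbb{F}_q^\times$, and that the trace form $T_{q/p}$ is non-degenerate, to conclude that if the index set $\{(i,j) : k_i+k_j \equiv 0 \pmod n\}$ is nonempty then one can choose $c_1,\ldots,c_e \in \mathbb{F}_p$ making $\Tr_{q/p}(c_A^{-1}\la^2) \ne 0$ — one must handle the characteristic-$p_0$ subtlety that a sum of squares over $\mathbb{F}_p$ can vanish, which is why the diagonal case $k_i + k_i \equiv 0$, i.e.\ $2k_i \equiv 0 \pmod n$, and the genuinely off-diagonal case $k_i+k_j \equiv 0$ with $i \ne j$ should be treated slightly differently (the latter gives a hyperbolic plane $2c_ic_j(\cdots)$, the former gives $c_i^2(\cdots)$).

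I expect the main obstacle to be this last non-degeneracy bookkeeping — precisely pinning down that the quadratic form $\Tr_{q/p}(c_A^{-1}\la^2)$ on the annihilator is non-trivial exactly when $\mathcal{M}$ meets $\{0,n\}$, keeping track of the scalar $c_A^{-1}$ and the distinct-versus-equal index cases — rather than the linear algebra identifying the annihilator, which follows cleanly from Lemma \ref{tt20} and the Vandermonde/trace computation. The converse direction (Condition \ref{ast} $\Rightarrow \mathcal{M} \cap \{0,n\} = \emptyset$) then follows by contraposition from the same explicit form. Throughout I would invoke Lemma \ref{tta} (or Lemma \ref{ttb}) as the characterization of Condition \ref{ast} that is most convenient, and Lemma \ref{tt1} only insofar as it is needed to know $F_A$ and $\overline{A}$ interact correctly with the symmetry — though for this particular lemma the explicit basis description in Lemma \ref{tt20} already gives everything.
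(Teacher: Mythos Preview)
Your approach is essentially the paper's: identify the annihilator of $\Ker a$ under the trace pairing as $\bigoplus_i \mathbb{F}_p\,\alpha^{n-k_i}$ via the computation $\Tr_{q/p}(\alpha^m)=0$ unless $n\mid m$, expand $\Tr_{q/p}(c_A^{-1}\lambda^2)$ as a quadratic form supported on pairs with $k_i+k_j\in\{0,n\}$, and for the converse exhibit explicit $\lambda$ (namely $1$, $\alpha^{n/2}$, or $\alpha^{n-k_i}+\alpha^{n-k_j}$) according to which element of $\{0,n\}$ lies in $\mathcal{M}$. The one point the paper makes crisply that you leave vague is that $c_A\in\mathbb{F}_p$ (not merely $\mathbb{F}_q^\times$): since $b_0=\prod_{i=1}^e(-\zeta^{k_i})\in\mathbb{F}_p$ --- your ``$\prod_i(-\zeta^{k_i}\alpha)$-type product'' is not the right expression --- and $a_e\in\mathbb{F}_p$ by hypothesis, \eqref{cc1} gives $c_A\in\mathbb{F}_p$, and this is exactly what lets $c_A^{-1}$ be pulled outside the trace.
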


\begin{proof}
We write $F_A(x)=\sum_{i=0}^e b_i x^{p^i}$. 
We have $b_e=1$ and 
$b_0=\prod_{i=1}^e (-\zeta^{k_i}) \in \mathbb{F}_p$
by $\zeta \in \mathbb{F}_p$. 
From \eqref{cc1} and the assumption 
$a_e \in \mathbb{F}_p$, 
it follows that 
\begin{equation}\label{ca}
c_A \in \mathbb{F}_p. 
\end{equation}

We recall that $\alpha^{p^i}=\zeta^i \alpha$ for 
an integer $i$. 
For $s \in \mathbb{F}_p$ and 
an integer $k$ which is not divisible by $n$, we have
\begin{equation}\label{ket}
\Tr_{q/p}(s\alpha^k)=
s\sum_{i=0}^{n-1} (\alpha^k)^{p^i} = 
s \alpha^k\sum_{i=0}^{n-1} \zeta^{ik}=
s \alpha^k \frac{1-\zeta^{nk}}{1-\zeta^k}=0
\end{equation}
since $\zeta^n = 1$.

Let $\mathcal{L}$ be as in Lemma \ref{tt20} (2).
Take an element $\la=\sum_{i=0}^{n-1} \la_i \alpha^i \in \mathbb{F}_q$
with $\la_i \in \mathbb{F}_{p}$.
We set $\la_n \coloneqq \la_0$.
First, we prove the following conditions are equivalent:
\begin{itemize}
\item[{\rm (a)}] $\Tr_{q/p}(\la t)=0$ for any $t \in \Ker a$. 
\item[{\rm (b)}] $\la_{n-l}=0$ for any $l \in \mathcal{L}$.
\end{itemize}

We note $(\alpha^n)^{p}
=(\alpha^{p})^n=(\zeta \alpha)^n=\alpha^n$.
Thus $\alpha^n \in \mathbb{F}_p^{\times}$.
Using \eqref{ket}, 
for any $s \in \mathbb{F}_{p}$
and $l \in \mathcal{L}$, we calculate 
\begin{align*}
\Tr_{q/p}(\la s \alpha^l)&=
\begin{cases}
\Tr_{q/p}(\la_{n-l} s \alpha^n) & 
\textrm{if $l \neq 0$}, \\
\Tr_{q/p}(\la_0 s) & 
\textrm{if $l = 0$}
\end{cases} \\
&=
\begin{cases}
n \la_{n-l} s \alpha^n & 
\textrm{if $l \neq 0$}, \\
n \la_0 s & 
\textrm{if $l = 0$}. 
\end{cases}
\end{align*}
By $p \equiv 1 \pmod{n}$, the integer $n$ is not divisible by $p_0$.
Therefore, if $\la$ satisfies the condition (a),
then $\la_{n-l}=0$ for any $l \in \mathcal{L}$
by Lemma \ref{tt20} (2).
To the contrary, if 
$\la_{n-l}=0$ for any 
$l \in \mathcal{L}$,
then $\la$ satisfies the condition (a)
by the above argument.
Hence the conditions (a), (b) are equivalent.

We shall prove the claim of the lemma.
Let $\la \in \mathbb{F}_q$ be an element
satisfying the conditions (a), (b).
Then we can write
\[
\la=\sum_{i=1}^{e} \la_{n-k_i} \alpha^{n-k_i}
\]
for some $\la_{n-k_i} \in \mathbb{F}_{p}$.
Recall that $0 \leq k_i \leq n-1$ for $1 \leq i \leq e$. 
By using \eqref{ca}, \eqref{ket}, and 
$\alpha^n \in \mathbb{F}_p$, we calculate  
\begin{gather}\label{trpq}
        \begin{aligned}
            \Tr_{q/p} (c_A^{-1} \lambda^2) 
            &= c_A^{-1}
                \Tr_{q/p}
                ( \lambda^2  ) \\
            &= c_A^{-1}
                \Tr_{q/p}
                \left(\sum_{\substack{1 \leq i,j \leq e \\ k_i + k_j = 0,\, n}} \la_{n - k_i} \la_{n - k_j}  
                \alpha^{2n - k_i - k_j }\right) \\
            &= c_A^{-1} n 
                 \sum_{\substack{1 \leq i,j \leq e \\ k_i + k_j = 0,\, n}} \la_{n - k_i} \la_{n - k_j} 
                \alpha^{2n - k_i - k_j }
              . 
        \end{aligned}
        \end{gather}
        Thus, if $\mathcal{M}$ does not contain $0$ nor $n$, the above sum is zero. Hence Condition \ref{ast} is satisfied.

To the contrary, assume that Condition \ref{ast} is satisfied.
Let $\la \in \mathbb{F}_q$ be an element
satisfying the conditions (a), (b).
We put 
\[
T({\la}) \coloneqq \sum_{\substack{1 \leq i,j \leq e \\ k_i + k_j = 0,\, n}} \la_{n - k_i} \la_{n - k_j}  
                \alpha^{2n - k_i - k_j}. 
\]
By \eqref{trpq}, we have  
$T({\la})=0$. 

We show that $\mathcal{M}$ does not contain $0$ nor $n$
by contradiction.
\begin{itemize}
\item Assume that there exists an integer $1 \leq i \leq e$ such that $k_i=0$.
Then $\la = 1$ satisfies the conditions (a), (b).
Taking $\la=1$, we have $T(1) = 1 \neq 0$, which is a contradiction. 

\item Assume that $n$ is even and there exists an integer $1 \leq i \leq e$ such that 
$k_i=n/2$.
Then $\la = \alpha^{n/2}$ satisfies the conditions (a), (b).
Taking $\la=\alpha^{n/2}$, 
we have $T(\alpha^{n/2}) = \alpha^n \neq 0$, which is a contradiction. 

\item Assume that there exist integers $1 \leq i,j \leq e$ such that 
$k_i \neq k_j$, $k_i \notin \{ 0,\,n/2 \}$, $k_j \notin \{ 0,\,n/2 \}$, and $k_i+k_j=n$. 
Then $\la = \alpha^{n-k_i}+\alpha^{n-k_j}$ satisfies the conditions (a), (b).
Taking $\la = \alpha^{n-k_i}+\alpha^{n-k_j}$,
we have
$T(\alpha^{n-k_i}+\alpha^{n-k_j}) = 2 \alpha^n \neq 0$, which is a contradiction.
\end{itemize}
Therefore, $\mathcal{M}$ does not contain $0$ nor $n$. 
\end{proof}

In Theorem \ref{214}, we give some examples of van der Geer--van der Vlugt curves to which our criteria can be applied.
These curves were also studied by Bartoli--Quoos--Sayg\i--Y\i lmaz in \cite{B}; see Remark \ref{bqsy}.

\begin{theorem}\label{214}
Assume that 
$p_0 \neq 2$, 
$q=p^n$, and $p \equiv 1 \pmod n$.
Let $e \geq 1$, and $c_0$, $c_1$, $\ldots$, $c_{e-1} \in \mathbb{F}_p$ be elements such that the polynomial 
 \[
        g(x) \coloneqq x^{2e} + c_{e - 1} x^{2 e - 1} + \cdots +  c_1 x^{e + 1} + c_0 x^e + c_1 x^{e - 1}
            + \cdots + c_{e - 1} x + 1 \in \mathbb{F}_p[x]
            \]
            divides $x^n-1$. 
            Let 
              \[
        R(x) \coloneqq 2 x^{p^{e}} + 2 c_{e - 1} x^{p^{e - 1}} + \cdots + 2 c_1 x^p + c_0 x
        \in \mathbb{F}_p[x]. 
    \]
    \begin{itemize}
\item[{\rm (1)}] 
The curve $\overline{C}_R$ is
$\mathbb{F}_{q^2}$-maximal if $q \equiv 3 \pmod 4$.
It is $\mathbb{F}_{q^2}$-minimal 
if $q \equiv 1 \pmod 4$.

\item[{\rm (2)}]
   We write $q = p_0^{f_0}$.
  \begin{itemize}
\item[{\rm (i)}]   If $n$ is even and 
   $p_0 \equiv 1 \pmod 4$, the curve 
   $\overline{C}_R$ is 
   $\mathbb{F}_q$-minimal.
\item[{\rm (ii)}] 
     If $n$ is even, $p_0 \equiv 3 \pmod 4$ and 
    $f_0 \equiv 2 \pmod 4$, the curve 
   $\overline{C}_R$ is 
$\mathbb{F}_q$-maximal.
   If $n$ is even, $p_0 \equiv 3 \pmod 4$ and  
    $f_0 \equiv 0 \pmod 4$, 
    the curve 
   $\overline{C}_R$ is 
    $\mathbb{F}_q$-minimal. 
 \item[{\rm (iii)}]   If $n$ is odd, the curve 
   $\overline{C}_R$ is neither  
$\mathbb{F}_q$-maximal nor $\mathbb{F}_q$-minimal.
\end{itemize}
\end{itemize}
\end{theorem}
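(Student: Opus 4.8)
The plan is to reduce the theorem to the criteria of Theorem~\ref{ttb3} and Theorem~\ref{ttb4} by producing, for the given $R$, a maximal abelian subgroup $A \subset H_R$ with $A \subset \mathbb{F}_q^2$ for which Condition~\ref{ast} holds, and then to read off all three parts using $a_e = 2$.

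First I would work out $E_R$. Since every coefficient of $R$ lies in $\mathbb{F}_p$, formula~\eqref{er} collapses, after Frobenius twists and cancellation, to
\[
E_R(x) = 2\bigl(x^{p^{2e}} + c_{e-1}x^{p^{2e-1}} + \cdots + c_1 x^{p^{e+1}} + c_0 x^{p^e} + c_1 x^{p^{e-1}} + \cdots + c_{e-1}x^p + x\bigr),
\]
that is, $E_R$ is $2$ times the $\mathbb{F}_p$-linearized polynomial obtained from $g$ by substituting $x^{p^j}$ for $x^j$. Because $p \equiv 1 \pmod n$ I may fix a primitive $n$-th root of unity $\zeta \in \mathbb{F}_p$ and $\alpha \in \mathbb{F}_q^{\times}$ with $\alpha^{p-1}=\zeta$, so that $\alpha^{p^j}=\zeta^j\alpha$ and hence $E_R(\alpha^k) = 2\alpha^k g(\zeta^k)$ for all $k$. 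Setting $S \coloneqq \{k \in \{0,\dots,n-1\} \mid g(\zeta^k)=0\}$, the hypothesis $g \mid x^n-1$ (with $\gcd(n,p_0)=1$, since $n \mid p-1$) makes $g$ separable with root set precisely $\{\zeta^k : k \in S\}$ and $|S|=\deg g = 2e$; the self-reciprocity $x^{2e}g(1/x)=g(x)$ makes $S$ stable under $k \mapsto -k \bmod n$. By Lemma~\ref{tt20}(1) the elements $\alpha^k$ ($k \in S$) are $\mathbb{F}_p$-linearly independent, so they form a basis of $V_R = \Ker E_R$; in particular $V_R \subset \mathbb{F}_q$.

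The step I expect to be the main obstacle is showing $0 \notin S$ and, when $n$ is even, $n/2 \notin S$, i.e.\ that neither $1$ nor $-1$ is a root of $g$. If $g(1)=0$, separability gives $g(x)=(x-1)h(x)$ with $h(1)\neq 0$ and $\deg h = 2e-1$; substituting into $x^{2e}g(1/x)=g(x)$ and cancelling $x-1$ yields $h(x) = -x^{2e-1}h(1/x)$, and evaluating at $x=1$ gives $2h(1)=0$, hence $h(1)=0$ since $p_0 \neq 2$ --- a contradiction. The identical argument with $x+1$ in place of $x-1$ rules out $-1$ when $n$ is even (so that $-1$ is an $n$-th root of unity). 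Alternatively one can invoke the non-degeneracy of the commutator pairing on $V_R$ (the center of $H_R$ is $\{0\}\times\mathbb{F}_p$): if $1 \in V_R$, Lemma~\ref{tt1} puts $1$ in the radical of that pairing, a contradiction. Either way $S$ contains no fixed point of $k \mapsto -k$, so it is a disjoint union of $e$ pairs $\{k,-k\}$ with $k \not\equiv 0,\ n/2 \pmod n$.

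Now I would choose one representative $k_i$ from each of these $e$ pairs, getting distinct $k_1,\dots,k_e \in \{1,\dots,n-1\}$ with $k_i+k_j \not\equiv 0 \pmod n$ for all $i,j$ (for $i=j$ because $k_i \not\equiv 0, n/2$, for $i\neq j$ because $k_j \not\equiv -k_i$). Using Lemma~\ref{tt1} one checks that $\overline{A} \coloneqq \bigoplus_{i=1}^{e}\mathbb{F}_p\alpha^{k_i}$ is isotropic for the commutator pairing, hence --- having dimension $e = \tfrac12\dim V_R$ --- a Lagrangian subspace; thus $A \coloneqq \pi^{-1}(\overline{A})$ is a maximal abelian subgroup with associated polynomial $F_A = f_{k_1}\circ\cdots\circ f_{k_e}$ (here $f_k(x)=x^p-\zeta^k x$), and Lemma~\ref{tt20}(2) gives $A \subset \mathbb{F}_q^2$ together with the description of $\Ker a$. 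Since $a_e = 2 \in \mathbb{F}_p$ and the set $\mathcal{M}=\{k_i+k_j : 1\le i,j\le e\}$ contains neither $0$ (all $k_i \geq 1$) nor $n$ ($k_i+k_j \equiv 0 \pmod n$ would force $k_i+k_j=n$), Lemma~\ref{tt2} shows that $\overline{C}_R$ satisfies Condition~\ref{ast}. Part~(1) is then Theorem~\ref{ttb3}(2),(3). For part~(2), $n$ even gives $\left(\frac{a_e}{q}\right)=\left(\frac{2}{q}\right)=1$, so Theorem~\ref{ttb4}(1) gives (i), Theorem~\ref{ttb4}(2) gives (ii) (maximal when $f_0 \equiv 2 \bmod 4$, minimal when $f_0 \equiv 0 \bmod 4$, these being the only possibilities since $f_0$ is even when $n$ is even), and Theorem~\ref{ttb4}(3) --- equivalently Theorem~\ref{cpq}(3) --- gives (iii).
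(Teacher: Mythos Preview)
Your proposal is correct and follows essentially the same route as the paper: compute $E_R$, locate $V_R$ via the roots of $g$ among the $\alpha^k$, build $\overline{A}$ from half of those roots using Lemma~\ref{tt1}, verify the hypotheses of Lemma~\ref{tt20} and Lemma~\ref{tt2} so that Condition~\ref{ast} holds, and then read everything off from Theorems~\ref{ttb3} and~\ref{ttb4} with $a_e=2$. The only substantive difference is the argument that $\pm 1$ are not roots of $g$: the paper observes directly that $g'(\kappa)=e\kappa\,g(\kappa)$ for $\kappa=\pm1$, so a root at $\pm1$ would be a double root, contradicting separability; your factorisation/self-reciprocity argument reaches the same conclusion by a slightly different (equally short) path, and the paper does not use your alternative radical-of-the-pairing argument. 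The paper also makes the concrete choice $0<k_1<\cdots<k_e<n/2$, which immediately gives $0<k_i+k_j<n$, whereas you take arbitrary representatives and argue $k_i+k_j\neq 0,n$; both choices feed into Lemma~\ref{tt2} in the same way.
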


\begin{proof}
Let $\kappa \in \{\pm 1\}$. 
Since $g(x)$ divides $x^n-1$ and
$p_0$ does not divide $n$,
the polynomial $g(x)$ is separable.
We can directly check
$g'(\kappa)=e \kappa  g(\kappa)$, where $g'(x)$ is the 
derivative of $g(x)$. 
Thus $g(\kappa) \neq 0$. 

Let $\zeta \in \mathbb{F}_p$ be a primitive 
    $n$-th root of unity. 
Since $g(x)$ is self-reciprocal,
$g(x)$ divides $x^n-1$,
$g(1) \neq 0$, and $g(-1) \neq 0$,
the roots of $g(x)$ have the following form: 
\[
\zeta^{k_1}, \ldots, \zeta^{k_e}, \zeta^{n - k_e}, \ldots, \zeta^{n - k_1} \qquad 
(0< k_1 <  \cdots < k_e < n/2).
\]
By \eqref{er}, we have 
   \[
        E_R(x) = 2 \left(x^{p^{2e}} + c_{e - 1} x^{p^{2 e - 1}} + \cdots +  c_1 x^{p^{e + 1}} + c_0 x^{p^e} + c_1 x^{p^{e - 1}}
            + \cdots + c_{e - 1} x^p + x\right). 
    \]

By $q=p^n$ and $\zeta^n=1$,
there exists an element
$\alpha \in \mathbb{F}_q$
with $\alpha^{p-1} = \zeta$.
By Lemma \ref{tt20} (1), 
it follows that $\{1, \alpha, \ldots,\alpha^{n-1}\}$ is an $\mathbb{F}_p$-basis of $\mathbb{F}_q$.

Since $(\alpha^i)^{p^j} = \zeta^{ij} \alpha^i$,
we have $E_R(\alpha^i)=2g(\zeta^i) \alpha^i$ for every integer $i$.
Hence, we obtain  
\[
E_R(\alpha^{k_i})=0, \quad 
E_R(\alpha^{-k_i})=0
\quad \textrm{for $1 \leq i \leq e$}.  
\]

Letting  
$\overline{A}\coloneqq \bigoplus_{i=1}^e \mathbb{F}_p \alpha^{k_i}$, the inverse image $A=\pi^{-1}(\overline{A})$ is a maximal abelian subgroup of 
$H_R$ by $k_i+k_j \not\equiv 0 \pmod{n}$, \eqref{op}, and Lemma \ref{tt1}. 
By setting 
\[
 F_A(x) \coloneqq f_{k_1} \circ \cdots \circ f_{k_e}(x) \quad \textrm{with 
$f_k(x) \coloneqq x^p-\zeta^k x \in \mathbb{F}_p[x]$}, 
\]
we have 
$\overline{A}=\{x \in \mathbb{F} \mid F_A(x)=0\}$
by Lemma \ref{tt20} (2).
Since $0<k_i+k_j<n$ for any $1 \leq i,j \leq e$, 
the curve $\overline{C}_R$ satisfies Condition \ref{ast} by Lemma \ref{tt2}. 

From Lemma \ref{tt20} (2), we obtain $A \subset \mathbb{F}_q^2$. 
Hence the claim (1) follows from Theorem \ref{ttb3}.
Since $a_e = 2$, we have
$\displaystyle \left(\frac{a_e}{q}\right) = 
 \left(\frac{2}{q}\right) = 1$
when $n$ is even.
Thus, the claim (2) follows from Theorem \ref{ttb4}.
\end{proof}

\begin{example}\label{2R}
Let
\[
R_{+}(x) \coloneqq 2
\sum_{i=1}^e x^{p^i}+x, \qquad
R_{-}(x) \coloneqq 2
\sum_{i=1}^e (-1)^i x^{p^i}+x.
\]
\begin{itemize}
\item[{\rm (1)}] 
We take $\xi \in \mathbb{F}_{p^2}^{\times}$
such that $\xi^p+\xi=0$. 
Then we have the $\mathbb{F}_{p^2}$-isomorphism
$C_{R_+} \xrightarrow{\sim} C_{R_-};\ (x,y) \mapsto 
(\xi x,\xi^2 y)$. 
\item[{\rm (2)}] 
Assume that $p \equiv 3 \pmod 4$, $q=p^{2e+1}$ and 
$p\equiv 1 \pmod{2e+1}$.
We apply Theorem \ref{214} to the case where 
$n=2e+1$ and $g(x)=\sum_{i=0}^{2e} x^i$, which divides $x^n-1$. 
By Theorem \ref{214} (1), the curve $\overline{C}_{R_+}$ is 
$\mathbb{F}_{q^2}$-maximal. 
Thus, both $\overline{C}_{R_{+}}$ and $\overline{C}_{R_{-}}$
are $\mathbb{F}_{q^2}$-maximal by (1). 
\end{itemize}
\end{example}

\section{A generalization of van der Geer--van der Vlugt curves}
\label{Generalization}

Our main aim in this section is to study a curve $\overline{C}_{R,r}$
defined by the equation of the form
\[
z^{p^r}-z= xR(x)
\]
for some $r \geq 1$.
When $r = 1$, the curve $\overline{C}_{R,r}$ is nothing but the van der Geer--van der Vlugt curve
associated with $R(x)$.

We show that $\overline{C}_{R,r}$ is supersingular and 
give a certain criterion for $\overline{C}_{R,r}$
to be maximal in Lemma \ref{fl}. 
We give a necessary and sufficient condition for 
 $\overline{C}_{R,r}$ to be  
$\mathbb{F}_{p^{2e}}$-maximal in Proposition \ref{c1}, 
which is a generalization of 
\"Ozbudak--Sayg\i's result \cite[Proposition 2.1]{OS}. 
Based on this, we propose a conjecture on maximality of $\overline{C}_{R,r}$; see Conjecture \ref{conj}.

\subsection{$L$-polynomials and the supersingularity of $\overline{C}_{R,r}$}

Let $p_0$ be a prime number, and $p$ a power of $p_0$.
In this section, we do not assume $p_0 \neq 2$.

Let $e \geq 0$ and $f \geq 1$ be integers, and
\[
  R(x) = \sum_{i=0}^e a_i x^{p^i} \in \mathbb{F}_{p^{2f}}[x]
  \qquad (a_e \neq 0)
\]
be an $\mathbb{F}_p$-linearized polynomial of degree $p^e$.
For a positive integer $r \geq 1$,
let $C_{R,r}$ be the affine curve over $\mathbb{F}_{p^{2f}}$
defined by 
\[
z^{p^r}-z= xR(x). 
\]
Let $\overline{C}_{R,r}$ be the smooth compactification of $C_{R,r}$.

For $\zeta \in \mathbb{F}^{\times}$,
we put
$\zeta R(x) \coloneqq \sum_{i=0}^e \zeta a_i x^{p^i}$.
For a character
$\psi \in \mathbb{F}_p^{\vee}$, let $\mathscr{L}_{\psi}$ denote the 
$\overline{\mathbb{\mathbb{Q}}}_{\ell}$-sheaf on 
$\mathbb{A}^1 \coloneqq \Spec \mathbb{F}_p[x]$ defined by $\psi$ and the 
Artin--Schreier covering
$\Spec \mathbb{F}_p[x,y]/(x - (y^p - y)) \to \mathbb{A}^1$.
For a morphism between $\mathbb{F}_p$-schemes $X \to \mathbb{A}^1$, 
let $\mathscr{L}_{\psi}(f)$ denote the 
pull-back $f^\ast \mathscr{L}_{\psi}$ to $X$.

\begin{lemma}\label{ail}
If the curve $\overline{C}_{R,r}$ is $\mathbb{F}_{p^{2f}}$-maximal, then $2f \equiv 0 \pmod{r}$. 
\end{lemma}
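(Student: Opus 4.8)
The statement is that $\mathbb{F}_{p^{2f}}$-maximality of $\overline{C}_{R,r}$ forces $r \mid 2f$. The natural strategy is to reduce to the case $r=1$, where everything is understood. First I would observe that the curve $\overline{C}_{R,r}$ defined by $z^{p^r}-z = xR(x)$ admits a natural surjection from the van der Geer--van der Vlugt-type curve, or more precisely that its $L$-polynomial (equivalently, the cohomology $H^1_{\mathrm c}(C_{R,r})$) decomposes along the characters of the group $\mu := \mathbb{F}_{p^r}$ acting additively on the $z$-coordinate. Concretely, $H^1_{\mathrm c}(C_{R,r})$ decomposes as $\bigoplus_{\psi} H^1_{\mathrm c}(\mathbb{A}^1, \mathscr{L}_\psi(xR(x)))$ over $\psi \in \mathbb{F}_{p^r}^\vee$, where for $\psi \neq 1$ each summand is a sum of Gauss-sum-type eigenvalues. (When $r=1$ this is exactly the decomposition underlying Theorem \ref{tt}.)

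**Key steps.** The plan is: (i) Write the Frobenius eigenvalues on $H^1(\overline{C}_{R,r})$ as $\{\tau_{\psi,\xi}\}$ indexed by nontrivial additive characters $\psi$ of $\mathbb{F}_{p^r}$ and auxiliary data $\xi$; each $\tau_{\psi,\xi}$ has absolute value $p^{1/2}$ and is (up to roots of unity) a quadratic Gauss sum, so $\overline{C}_{R,r}$ is supersingular — this is presumably established just before this lemma in the text. (ii) Maximality over $\mathbb{F}_{p^{2f}}$ means $\tau_{\psi,\xi}^{2f} = -(p^{1/2})^{2f} = -p^f$ for every such eigenvalue. (iii) Now fix a nontrivial $\psi$ and consider how $\tau_{\psi,\xi}$ transforms under replacing $\psi$ by $\psi(\lambda\,\cdot\,)$ for $\lambda \in \mathbb{F}_{p^r}^\times$: as in Lemma \ref{cpql}, this multiplies the Gauss-sum factor by a quadratic residue symbol $\left(\frac{\lambda}{p^r}\right)$ and leaves the other factors essentially controlled; more importantly, the set of eigenvalues is permuted by the Galois action, and the condition $\tau^{2f}=-p^f$ for \emph{all} of them is a strong constraint. (iv) Extract from this the arithmetic statement: if $r \nmid 2f$, pick a character $\psi$ of $\mathbb{F}_{p^r}$ whose order (a power of $p_0$, namely $p_0$ if $r \geq 1$) interacts with the Frobenius in $\mathbb{F}_{p^{2f}}$ in a way incompatible with all eigenvalues being $-p^{f}$ after raising to the $2f$-th power. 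The cleanest route is: the eigenvalue $\tau_{\psi,\xi}$ is defined over a field related to $\mathbb{F}_{p^r}$ (because the character $\psi$ of $\mathbb{F}_{p^r}$ only descends compatibly with $\mathrm{Fr}_{p^{2f}}$ when $\mathbb{F}_{p^r} \subseteq \mathbb{F}_{p^{2f}}$, i.e. $r \mid 2f$); if $r \nmid 2f$ then $\mathrm{Fr}_{p^{2f}}$ does not fix the relevant Artin--Schreier sheaf datum, forcing a nontrivial permutation of the $\tau$'s that obstructs $\tau^{2f} = -p^f$ uniformly.

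**Main obstacle.** The delicate point is step (iii)--(iv): making precise \emph{why} $r \nmid 2f$ obstructs maximality, since a priori a permutation of eigenvalues all of which individually satisfy $\tau^{2f}=-p^f$ is harmless. The real content must be that when $r \nmid 2f$, not all $\tau_{\psi,\xi}$ can have argument an odd multiple of $\pi/(2f) \cdot$(something): the Gauss sum over $\mathbb{F}_{p^r}$ (or over the field of definition of the sheaf, which is $\mathbb{F}_{p^{\mathrm{lcm}(r,\ldots)}}$) has a prescribed behaviour under Hasse--Davenport, and raising to the $2f$-th power when $r \nmid 2f$ lands in a set of roots of unity that cannot all equal $-1$ after normalizing by $p^f$. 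I expect the cleanest argument is by contradiction: assume $\overline{C}_{R,r}$ is $\mathbb{F}_{p^{2f}}$-maximal, deduce via Lemma \ref{maximal minimal extension} and the supersingularity that every $\tau/p^{1/2}$ is a root of unity $\zeta$ with $\zeta^{2f} = -1$, hence $2f$ exactly divides the order of some such $\zeta$ in the appropriate sense; then identify that order with a quantity forced to divide $\mathrm{lcm}(r, 2f')$ for the genuine part $f'$, and conclude $r \mid 2f$. I would look to mirror the structure of the proof of Lemma \ref{crl}, splitting the order of each $\zeta_i$ into its $2$-part and odd part and tracking which primes can appear.
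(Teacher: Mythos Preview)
You have assembled the right ingredients---the character decomposition of $H^1_{\mathrm c}(C_{R,r})$ and the observation that Frobenius permutes the pieces---but you are missing the one-line punchline, and as a result your plan drifts into Gauss-sum arithmetic that is not needed.

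Your decomposition in step (i) is essentially the paper's: fixing a nontrivial $\psi \in \mathbb{F}_p^\vee$, one has
\[
H^1(\overline{C}_{R,r}) \;\simeq\; H^1_{\mathrm c}(C_{R,r}) \;\simeq\; \bigoplus_{\zeta \in \mathbb{F}_{p^r}^\times} H^1_{\mathrm c}\bigl(\mathbb{A}^1,\, \mathscr{L}_\psi(x\,\zeta R(x))\bigr),
\]
the summands being the isotypic components for the covering group $\mathbb{F}_{p^r}$ acting by $z \mapsto z+c$ (your indexing by characters of $\mathbb{F}_{p^r}$ is dual to this one). And you correctly note in step~(iv) that $\mathrm{Fr}_{p^{2f}}^\ast$ sends the $\zeta$-summand to the $\zeta^{p^{2f}}$-summand, which is a nontrivial permutation when $r \nmid 2f$.

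The gap is exactly what you flag in your ``Main obstacle'': you do not see why a nontrivial permutation of the summands contradicts maximality. Here is the missing step. First, a correction: the relevant eigenvalues are those of $\mathrm{Fr}_{p^{2f}}^\ast$ (the curve is only defined over $\mathbb{F}_{p^{2f}}$, so there is no $\mathrm{Fr}_p^\ast$ here), and maximality says each of them \emph{equals} $-p^f$, not that its $2f$-th power does. Since Frobenius on $H^1$ of a curve is semisimple (Weil, via the Jacobian), having the single eigenvalue $-p^f$ forces $\mathrm{Fr}_{p^{2f}}^\ast = -p^f \cdot \mathrm{id}$ as an operator. A scalar operator stabilizes \emph{every} subspace; in particular it maps each nonzero summand above to itself. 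Combined with the permutation $\zeta \mapsto \zeta^{p^{2f}}$, this forces $\zeta^{p^{2f}} = \zeta$ for all $\zeta \in \mathbb{F}_{p^r}^\times$, hence $\mathbb{F}_{p^r} \subset \mathbb{F}_{p^{2f}}$ and $r \mid 2f$. That is the whole argument in the paper; the Hasse--Davenport relation, quadratic-residue bookkeeping, and the order-tracking in the style of Lemma~\ref{crl} are all unnecessary.
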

\begin{proof}
We take $\psi \in\mathbb{F}_p^{\vee} \setminus 
\{1\}$. 
We have isomorphisms 
\begin{equation}\label{ai}
H^1(\overline{C}_{R,r}) \xleftarrow{\sim}
H_{\rm c}^1(C_{R,r}) \simeq 
\bigoplus_{\zeta \in \mathbb{F}_{p^r}^{\times}} 
H_{\rm c}^1(\mathbb{A}^1,
\mathscr{L}_{\psi}(x \zeta R(x))), 
\end{equation}
which are shown in the same way as 
\cite[Lemma 2.8 and Corollary 3.6]{TT}. 
Since 
$\overline{C}_{R,r}$ is $\mathbb{F}_{p^{2f}}$-maximal, 
the subspaces 
$\{H_{\rm c}^1(\mathbb{A}^1,
\mathscr{L}_{\psi}(x \zeta R(x)))\}_{\zeta \in 
\mathbb{F}_{p^r}^{\times}}$ in $H^1(\overline{C}_{R,r})$ via \eqref{ai}
are stable under the action of 
$\mathrm{Fr}_q^\ast$.
Hence we have $\mathbb{F}_{p^r} \subset \mathbb{F}_{p^{2f}}$. 
Thus the claim follows. 
\end{proof}

\begin{corollary}\label{aic}
Assume $R(x) \in \mathbb{F}_{p_0}[x]$. 
If the curve $\overline{C}_{R,r}$ is not $\mathbb{F}_{p^k}$-maximal for any $k \geq 1$,
then $\overline{C}_{R,r}$ is not $\mathbb{F}_{p_0^k}$-maximal
for any $k \geq 1$. 
\end{corollary}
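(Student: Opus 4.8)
The plan is to prove the contrapositive: if $\overline{C}_{R,r}$ is $\mathbb{F}_{p_0^j}$-maximal for some $j \geq 1$, then it is $\mathbb{F}_{p^k}$-maximal for some $k \geq 1$. The key observation is that, since $R(x) \in \mathbb{F}_{p_0}[x]$, the curve $\overline{C}_{R,r}$ is already defined over the prime field $\mathbb{F}_{p_0}$: writing $p = p_0^m$, its defining equation $z^{p^r} - z = xR(x)$ is literally $z^{p_0^{mr}} - z = xR(x)$, so $\overline{C}_{R,r}$ is an instance of the curves studied in this section with $(p_0, mr)$ in place of $(p, r)$, and $\mathbb{F}_{p_0^{mr}} = \mathbb{F}_{p^r}$.

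First I would apply the decomposition \eqref{ai} with base field $\mathbb{F}_{p_0}$: fixing a nontrivial $\psi \in \mathbb{F}_{p_0}^{\vee}$, one obtains $H^1(\overline{C}_{R,r}) \cong \bigoplus_{\zeta \in \mathbb{F}_{p^r}^{\times}} H^1_{\mathrm c}(\mathbb{A}^1, \mathscr{L}_\psi(x\zeta R(x)))$, and --- exactly as in the proof of Lemma \ref{ail} --- the geometric Frobenius $\mathrm{Fr}_{p_0}^\ast$ permutes these summands through the Galois action $\zeta \mapsto \zeta^{p_0}$ on $\mathbb{F}_{p^r}^{\times}$ (this uses that $R$ has coefficients in $\mathbb{F}_{p_0}$).

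Next, assuming $\overline{C}_{R,r}$ is $\mathbb{F}_{p_0^j}$-maximal, Lemma \ref{maximal minimal} gives that $\mathrm{Fr}_{p_0^j}^\ast = (\mathrm{Fr}_{p_0}^\ast)^j$ acts on $H^1(\overline{C}_{R,r})$ as the scalar $-p_0^{j/2}$; being a scalar, it fixes every summand above, so the induced permutation $\zeta \mapsto \zeta^{p_0^j}$ of $\mathbb{F}_{p^r}^{\times}$ is trivial. This forces $\mathbb{F}_{p^r} \subseteq \mathbb{F}_{p_0^j}$, i.e.\ $mr \mid j$; writing $j = mrt$ we get $\mathbb{F}_{p_0^j} = \mathbb{F}_{p_0^{mrt}} = \mathbb{F}_{p^{rt}}$, so $\overline{C}_{R,r}$ is $\mathbb{F}_{p^{rt}}$-maximal, which is the desired conclusion. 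In effect this is Lemma \ref{ail} with $p_0$ playing the role of $p$; note that the argument there uses only that the Frobenius is scalar, not the parity of the degree, so it applies verbatim even when $j$ is odd.

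The only slightly delicate point --- the step I expect to need the most care --- is the identification of the $\mathrm{Fr}_{p_0}^\ast$-action on the summands of \eqref{ai}: since the summand indexed by $\zeta$ is a priori only defined over $\mathbb{F}_{p_0}(\zeta)$, one must verify that $\mathrm{Fr}_{p_0}^\ast$ carries it to the summand indexed by $\zeta^{p_0}$, which relies on $R(x) \in \mathbb{F}_{p_0}[x]$. This is precisely the input already present in the proof of Lemma \ref{ail}, so it can be cited directly rather than redone; apart from this, the proof is formal.
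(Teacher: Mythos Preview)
Your proposal is correct and follows essentially the same route as the paper: both argue by contrapositive, invoke the decomposition \eqref{ai} and the Frobenius-stability argument from Lemma~\ref{ail} (applied with $p_0$ in the role of $p$, using $R(x)\in\mathbb{F}_{p_0}[x]$) to obtain $\mathbb{F}_{p^r}\subset\mathbb{F}_{p_0^j}$, and then conclude that $p_0^j$ is a power of $p$. You spell out more of the details than the paper, which simply writes ``by the same argument as the proof of Lemma~\ref{ail}'', but the substance is identical.
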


\begin{proof}
Assume that $\overline{C}_{R,r}$ is $\mathbb{F}_{p_0^k}$-maximal for some $k \geq 1$.
By the same argument as the proof of Lemma \ref{ail}, 
we have $\mathbb{F}_{p^r} \subset 
\mathbb{F}_{p_0^k}$.
Hence we can write $p_0^k=p^{k'}$ for some $k'$.
Thus $\overline{C}_{R,r}$ is $\mathbb{F}_{p^{k'}}$-maximal. 
\end{proof}

For an element $\zeta \in \mathbb{F}_{p^r}^{\times}$, 
let $A_{\zeta} \subset H_{\zeta R}$ be a 
maximal abelian subgroup. 
We identify the center of $H_{\zeta R}$
with $\mathbb{F}_p$ as in Section \ref{Explicit Formula}.
We take a character 
$\psi \in \mathbb{F}_p^{\vee} \setminus \{1\}$, and let 
\[
A_{\zeta,\psi}^{\vee} \coloneqq
\{\xi \in A_{\zeta}^{\vee} \mid \xi|_{\mathbb{F}_p}
=\psi\}. 
\]
In the following, we take a subfield
$\mathbb{F}_q \subset \mathbb{F}_{p^{2f}}$ and 
assume $R(x) \in \mathbb{F}_q[x]$. 
The $L$-polynomial of $\overline{C}_{R,r}$ over $\mathbb{F}_q$ is defined by
\[
L_{\overline{C}_{R,r}/\mathbb{F}_q}(T)
 \coloneqq
\det (1-\mathrm{Fr}_q^\ast T; H^1(\overline{C}_{R,r})). 
\]

\begin{proposition}
Assume $\mathbb{F}_{p^r} \subset \mathbb{F}_q$ and $A_{\zeta} \subset \mathbb{F}_q^2$
for every $\zeta \in \mathbb{F}_{p^r}^{\times}$.
There exist elements $\{\tau_{\xi} \in \overline{\mathbb{Q}}_{\ell}^{\times}\}_{\xi \in A_{\zeta,\psi}^{\vee}}$ which are $q^{1/2}$ 
times roots of unity such that  
\[
L_{\overline{C}_{R,r}/\mathbb{F}_q}(T)
=\prod_{\zeta \in \mathbb{F}_{p^r}^{\times}}
\prod_{\xi \in A_{\zeta, \psi}^{\vee}} 
(1-\tau_{\xi} T).
\]
Consequently, the curve $\overline{C}_{R,r}$ is supersingular. 
\end{proposition}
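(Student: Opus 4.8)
The plan is to push the decomposition \eqref{ai} from the proof of Lemma~\ref{ail} down to the level of $\mathbb{F}_q$ and then apply the $L$-polynomial factorization of Theorem~\ref{tt} to each summand. First I would observe that, since $\mathbb{F}_{p^r}\subset\mathbb{F}_q$ by hypothesis and $\zeta R(x)\in\mathbb{F}_q[x]$ for every $\zeta\in\mathbb{F}_{p^r}^{\times}$, all the ingredients of \eqref{ai} --- the curve $C_{R,r}$, the Artin--Schreier action of the additive group $\mathbb{F}_{p^r}$ on it, and the sheaves $\mathscr{L}_{\psi}(x\zeta R(x))$ on $\mathbb{A}^1$ --- are already defined over $\mathbb{F}_q$. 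Hence the chain of isomorphisms
\[
H^1(\overline{C}_{R,r})\xleftarrow{\ \sim\ }H^1_{\mathrm c}(C_{R,r})\ \simeq\ \bigoplus_{\zeta\in\mathbb{F}_{p^r}^{\times}}H^1_{\mathrm c}\bigl(\mathbb{A}^1,\mathscr{L}_{\psi}(x\zeta R(x))\bigr)
\]
is $\mathrm{Fr}_q^\ast$-equivariant, and each summand on the right is a $\mathrm{Fr}_q^\ast$-stable subspace of $H^1(\overline{C}_{R,r})$.

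Next I would fix $\zeta\in\mathbb{F}_{p^r}^{\times}$ and analyze the corresponding summand. The polynomial $\zeta R(x)$ is again an $\mathbb{F}_p$-linearized polynomial over $\mathbb{F}_q$ of degree $p^e$ (note $\zeta a_e\neq 0$), so $\overline{C}_{\zeta R}$ is a van der Geer--van der Vlugt curve over $\mathbb{F}_q$, and $A_{\zeta}\subset\mathbb{F}_q^2$ by hypothesis. Applying \eqref{ai} with $r=1$ to $\overline{C}_{\zeta R}$ identifies $H^1_{\mathrm c}(\mathbb{A}^1,\mathscr{L}_{\psi}(x\zeta R(x)))$ with the $\psi$-isotypic part of $H^1(\overline{C}_{\zeta R})$, and the factorization underlying Theorem~\ref{tt}(1) --- namely \cite[Corollary 3.6]{TT}, which holds for all $p_0$; only the explicit Gauss-sum formula requires $p_0\neq 2$ --- shows that this space splits $\mathrm{Fr}_q^\ast$-equivariantly into one-dimensional subspaces on which $\mathrm{Fr}_q^\ast$ acts by scalars $\tau_{\xi}$, $\xi\in A_{\zeta,\psi}^{\vee}$. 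Since van der Geer--van der Vlugt curves are supersingular (a classical fact; see \cite{GV} and \cite{TT}), every $\mathrm{Fr}_q^\ast$-eigenvalue on $H^1(\overline{C}_{\zeta R})$, in particular each $\tau_{\xi}$, is $q^{1/2}$ times a root of unity by (the $\mathbb{F}_q$-analogue of) Lemma~\ref{maximal minimal}(1).

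Assembling the pieces, $L_{\overline{C}_{R,r}/\mathbb{F}_q}(T)=\det(1-\mathrm{Fr}_q^\ast T;\,H^1(\overline{C}_{R,r}))=\prod_{\zeta\in\mathbb{F}_{p^r}^{\times}}\prod_{\xi\in A_{\zeta,\psi}^{\vee}}(1-\tau_{\xi}T)$ with all reciprocal roots equal to $q^{1/2}$ times roots of unity; hence $\overline{C}_{R,r}$ is supersingular by Lemma~\ref{maximal minimal}(1). The only point needing care is the equivariance in the first step: one must ensure that the covering-group action, and therefore the splitting \eqref{ai}, genuinely descends to $\mathbb{F}_q$ rather than merely existing over $\mathbb{F}_{p^{2f}}$ or over $\mathbb{F}$, which is exactly where the hypotheses $\mathbb{F}_{p^r}\subset\mathbb{F}_q$ and $R(x)\in\mathbb{F}_q[x]$ are used; after that, the argument is a direct application of the results of \cite{TT}.
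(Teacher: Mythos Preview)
Your proposal is correct and follows essentially the same approach as the paper: the paper's proof is the one-line citation ``This is a consequence of \eqref{ai}, \cite[Theorem~1.1 and Lemma~2.8]{TT}, and Lemma~\ref{maximal minimal},'' and your argument is precisely the unpacking of that citation---using the $\mathrm{Fr}_q^\ast$-equivariant decomposition \eqref{ai}, the $L$-polynomial factorization from \cite{TT} applied to each $\overline{C}_{\zeta R}$, and the supersingularity criterion of Lemma~\ref{maximal minimal}(1). Your explicit remark that \cite[Corollary~3.6]{TT} (the factorization, as opposed to the Gauss-sum formula) does not require $p_0\neq 2$ is a useful clarification the paper leaves implicit.
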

\begin{proof}
This is a consequence of 
\eqref{ai}, \cite[Theorem 1.1 and Lemma 2.8]{TT},
and Lemma \ref{maximal minimal}. 
\end{proof}

\begin{lemma}\label{fl}
The curve $\overline{C}_{R,r}$
is $\mathbb{F}_q$-maximal if and only if $\mathbb{F}_{p^r} \subset \mathbb{F}_q$ and 
 the curve $\overline{C}_{\zeta R}$ 
 is $\mathbb{F}_q$-maximal 
 for every $\zeta \in \mathbb{F}_{p^r}^{\times}$. 
\end{lemma}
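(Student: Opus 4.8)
The plan is to read off both implications directly from the decomposition \eqref{ai} of $H^1(\overline{C}_{R,r})$ as a direct sum of the cohomology groups $H_{\rm c}^1(\mathbb{A}^1, \mathscr{L}_{\psi}(x \zeta R(x)))$ over $\zeta \in \mathbb{F}_{p^r}^{\times}$, together with the fact (used already in the proof of Lemma \ref{ail}) that these summands are identified, as $\mathrm{Fr}_q^\ast$-modules, with $H^1(\overline{C}_{\zeta R})$ whenever $\mathbb{F}_{p^r} \subset \mathbb{F}_q$. The key point is that being $\mathbb{F}_q$-maximal is, by Lemma \ref{maximal minimal} (2), the condition that \emph{every} eigenvalue of $\mathrm{Fr}_q^\ast$ on $H^1$ equals $-q^{1/2}$; since the eigenvalues on a direct sum are the union of the eigenvalues on the summands, this condition holds for $\overline{C}_{R,r}$ if and only if it holds for each summand.

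For the ``only if'' direction, suppose $\overline{C}_{R,r}$ is $\mathbb{F}_q$-maximal. First, $\mathbb{F}_{p^r} \subset \mathbb{F}_q$ follows exactly as in Lemma \ref{ail}: the summands in \eqref{ai} are permuted by $\mathrm{Fr}_q^\ast$ according to its action on $\mathbb{F}_{p^r}^{\times}$, and $\mathbb{F}_q$-maximality forces $\mathrm{Fr}_q^\ast$ to act trivially on $H^1$ up to the scalar $-q^{1/2}$, hence trivially on the index set $\mathbb{F}_{p^r}^{\times}$, so $\mathbb{F}_{p^r} \subset \mathbb{F}_q$. Once this is known, each summand $H_{\rm c}^1(\mathbb{A}^1, \mathscr{L}_{\psi}(x \zeta R(x))) \cong H^1(\overline{C}_{\zeta R})$ is $\mathrm{Fr}_q^\ast$-stable, and all eigenvalues of $\mathrm{Fr}_q^\ast$ on it equal $-q^{1/2}$; by Lemma \ref{maximal minimal} (2) this says precisely that $\overline{C}_{\zeta R}$ is $\mathbb{F}_q$-maximal, for every $\zeta \in \mathbb{F}_{p^r}^{\times}$.

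For the ``if'' direction, assume $\mathbb{F}_{p^r} \subset \mathbb{F}_q$ and that $\overline{C}_{\zeta R}$ is $\mathbb{F}_q$-maximal for every $\zeta \in \mathbb{F}_{p^r}^{\times}$. Since $\mathbb{F}_{p^r} \subset \mathbb{F}_q$, the isomorphisms in \eqref{ai} are compatible with the $\mathrm{Fr}_q^\ast$-action and identify $H^1(\overline{C}_{R,r})$ with $\bigoplus_{\zeta} H^1(\overline{C}_{\zeta R})$. By Lemma \ref{maximal minimal} (2), on each $H^1(\overline{C}_{\zeta R})$ every eigenvalue of $\mathrm{Fr}_q^\ast$ equals $-q^{1/2}$; hence the same holds on the direct sum $H^1(\overline{C}_{R,r})$, and another application of Lemma \ref{maximal minimal} (2) shows $\overline{C}_{R,r}$ is $\mathbb{F}_q$-maximal.

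The only genuinely non-formal input is the $\mathrm{Fr}_q^\ast$-equivariant identification of the Artin--Schreier summands $H_{\rm c}^1(\mathbb{A}^1, \mathscr{L}_{\psi}(x\zeta R(x)))$ with $H^1(\overline{C}_{\zeta R})$ when $\mathbb{F}_{p^r} \subset \mathbb{F}_q$, which is established in the proof of Lemma \ref{ail} following \cite[Lemma 2.8 and Corollary 3.6]{TT}; granting that, the lemma is a bookkeeping exercise with eigenvalues, so I do not anticipate a real obstacle.
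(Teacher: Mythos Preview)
Your overall strategy is sound and your ``if'' direction matches the paper's, but there is a genuine imprecision in the ``only if'' direction. You assert that each summand
\[
H_{\rm c}^1(\mathbb{A}^1, \mathscr{L}_{\psi}(x \zeta R(x))) \cong H^1(\overline{C}_{\zeta R}),
\]
but this is not correct: for fixed $\psi$, the left-hand side is only the $\psi$-isotypic piece of $H^1(\overline{C}_{\zeta R}) \simeq \bigoplus_{\psi' \neq 1} H_{\rm c}^1(\mathbb{A}^1,\mathscr{L}_{\psi'}(x\zeta R(x)))$ (see Lemma~\ref{imp}~(1)). As written, your argument only shows that the Frobenius eigenvalues on this single $\psi$-piece are $-q^{1/2}$, which does not yet give maximality of $\overline{C}_{\zeta R}$. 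The fix is easy: since $\mathscr{L}_{\psi}(x\,\lambda\zeta R(x)) \simeq \mathscr{L}_{\psi_\lambda}(x\,\zeta R(x))$ for $\lambda \in \mathbb{F}_p^{\times}$ (where $\psi_\lambda(t)=\psi(\lambda t)$), regrouping the sum in \eqref{ai} by $\mathbb{F}_p^{\times}$-orbits on $\mathbb{F}_{p^r}^{\times}$ yields $H^1(\overline{C}_{R,r}) \simeq \bigoplus_{[\zeta]} H^1(\overline{C}_{\zeta R})$, and then your eigenvalue argument goes through.

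The paper sidesteps this bookkeeping entirely in the ``only if'' direction: it writes down the explicit finite morphism
\[
\overline{C}_{R,r} \to \overline{C}_{\zeta R}, \qquad (x,z) \longmapsto \Big(x,\ \sum_{i=0}^{r-1}(\zeta z)^{p^i}\Big)
\]
over $\mathbb{F}_q$ (once $\mathbb{F}_{p^r} \subset \mathbb{F}_q$), and then uses the standard fact that a curve dominated by an $\mathbb{F}_q$-maximal curve is itself $\mathbb{F}_q$-maximal. This is a bit more elementary and avoids tracking how the $\psi$-decomposition of $H^1(\overline{C}_{\zeta R})$ sits inside \eqref{ai}. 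For the ``if'' direction the paper does exactly what you do, using the inclusion $H^1(\overline{C}_{R,r}) \subset \bigoplus_{\zeta} H^1(\overline{C}_{\zeta R})$ coming from \eqref{ai} (note the paper writes~$\subset$, not~$\simeq$, for the same reason as above).
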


\begin{proof}
Assume that $\overline{C}_{R,r}$
is $\mathbb{F}_q$-maximal. 
By Lemma \ref{ail}, we have $\mathbb{F}_{p^r} \subset \mathbb{F}_q$. 
For any 
$\zeta \in \mathbb{F}_{p^r}^{\times}$, 
we have the finite \'etale morphism 
\begin{equation}\label{cRr}
C_{R,r} \to C_{\zeta R};\ 
(x,z) \mapsto \left(x,\ \sum_{i=0}^{r-1} 
(\zeta z)^{p^i}\right)
\end{equation}
over $\mathbb{F}_q$.
This extends to a finite morphism 
$\overline{C}_{R,r} \to \overline{C}_{\zeta R}$.
This implies that $\overline{C}_{\zeta R}$
is $\mathbb{F}_q$-maximal. 

To the contrary, assume that $\overline{C}_{\zeta R}$
is $\mathbb{F}_q$-maximal for 
every $\zeta \in \mathbb{F}_{p^r}^{\times}$. 
From \eqref{ai}, we have 
\[
H^1(\overline{C}_{R,r})\simeq 
H^1_{\rm c}(C_{R,r})
\simeq \bigoplus_{\zeta \in 
\mathbb{F}_{p^r}^{\times}}
H_{\rm c}^1(\mathbb{A}^1,\mathscr{L}_{\psi}(x \zeta R(x))) \subset 
\bigoplus_{\zeta \in \mathbb{F}_{p^r}^{\times}}
H_{\rm c}^1(C_{\zeta R}) \simeq 
\bigoplus_{\zeta \in \mathbb{F}_{p^r}^{\times}}
H^1(\overline{C}_{\zeta R}).
\]
Thus, the curve $\overline{C}_{R,r}$ is $\mathbb{F}_q$-maximal. 
\end{proof}

\subsection{A criterion for the $\mathbb{F}_{p^{2f}}$-maximality of $\overline{C}_{R,r}$ when $e = f$}

In this subsection, we consider the case $e=f$,
i.e.,\
\[
R(x) = \sum_{i=0}^f a_i x^{p^i} \in \mathbb{F}_{p^{2f}}[x]
  \qquad (a_f \neq 0)
\]
is an $\mathbb{F}_p$-linearized polynomial of degree $p^f$ over $\mathbb{F}_{p^{2f}}$.

The following result is a generalization of Proposition \ref{pp}.

\begin{proposition}\label{c1}
The following conditions are equivalent. 
\begin{itemize}
\item[{\rm (1)}] The curve 
$\overline{C}_{R,r}$ is 
$\mathbb{F}_{p^{2f}}$-maximal. 
\item[{\rm (2)}]
$f \equiv 0 \pmod{r}$,
$a_0=\cdots=a_{f-1}=0$, and 
$a_f^{p^f}+a_f=0$. 
\end{itemize}
\end{proposition}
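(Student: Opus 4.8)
The plan is to reduce the statement to Proposition \ref{pp} via Lemma \ref{fl}. By Lemma \ref{fl}, the curve $\overline{C}_{R,r}$ is $\mathbb{F}_{p^{2f}}$-maximal if and only if $\mathbb{F}_{p^r}\subset\mathbb{F}_{p^{2f}}$ (equivalently $r\mid 2f$) and $\overline{C}_{\zeta R}$ is $\mathbb{F}_{p^{2f}}$-maximal for every $\zeta\in\mathbb{F}_{p^r}^{\times}$. Now for each such $\zeta$, the twisted polynomial $\zeta R(x)=\sum_{i=0}^f\zeta a_i x^{p^i}$ is again an $\mathbb{F}_p$-linearized polynomial of degree $p^f$ over $\mathbb{F}_{p^{2f}}$ with leading coefficient $\zeta a_f\neq 0$, so Proposition \ref{pp} applies to it: $\overline{C}_{\zeta R}$ is $\mathbb{F}_{p^{2f}}$-maximal if and only if $\zeta a_0=\cdots=\zeta a_{f-1}=0$ and $(\zeta a_f)^{p^f}+\zeta a_f=0$.

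First I would treat the direction (2) $\Rightarrow$ (1). Assume $r\mid f$ (hence $r\mid 2f$, so $\mathbb{F}_{p^r}\subset\mathbb{F}_{p^{2f}}$), $a_0=\cdots=a_{f-1}=0$, and $a_f^{p^f}+a_f=0$. Fix $\zeta\in\mathbb{F}_{p^r}^{\times}$. Since $\zeta\in\mathbb{F}_{p^r}\subset\mathbb{F}_{p^f}$, we have $\zeta^{p^f}=\zeta$, so $(\zeta a_f)^{p^f}+\zeta a_f=\zeta(a_f^{p^f}+a_f)=0$; and trivially $\zeta a_i=0$ for $i<f$. By Proposition \ref{pp}, $\overline{C}_{\zeta R}$ is $\mathbb{F}_{p^{2f}}$-maximal. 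As this holds for all $\zeta$, Lemma \ref{fl} gives (1).

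Next, for (1) $\Rightarrow$ (2): assume $\overline{C}_{R,r}$ is $\mathbb{F}_{p^{2f}}$-maximal. Lemma \ref{fl} gives $r\mid 2f$ and that $\overline{C}_{\zeta R}$ is $\mathbb{F}_{p^{2f}}$-maximal for every $\zeta\in\mathbb{F}_{p^r}^{\times}$. Taking $\zeta=1$ and applying Proposition \ref{pp} already yields $a_0=\cdots=a_{f-1}=0$ and $a_f^{p^f}+a_f=0$. The one remaining point is to upgrade $r\mid 2f$ to $r\mid f$; this is the only genuinely new content and the place where care is needed. The idea is to exploit the full force of having $\overline{C}_{\zeta R}$ maximal for \emph{every} $\zeta\in\mathbb{F}_{p^r}^{\times}$: by Proposition \ref{pp}(3) applied to $\zeta R$, we need $(\zeta a_f)^{p^f}+\zeta a_f=0$, i.e.\ $\zeta^{p^f}=-\zeta a_f/a_f^{p^f}\cdot(-1)=\ldots$; more cleanly, combining with $a_f^{p^f}=-a_f$ we get $\zeta^{p^f}a_f^{p^f}=-\zeta a_f$, hence $\zeta^{p^f}(-a_f)=-\zeta a_f$, so $\zeta^{p^f}=\zeta$ for all $\zeta\in\mathbb{F}_{p^r}^{\times}$. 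This forces $\mathbb{F}_{p^r}\subset\mathbb{F}_{p^f}$, i.e.\ $r\mid f$, completing the proof.

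The main obstacle is precisely this last step — ensuring that the constraint "$(\zeta a_f)^{p^f}+\zeta a_f=0$ for all $\zeta\in\mathbb{F}_{p^r}^{\times}$" really does pin down $r\mid f$ rather than merely $r\mid 2f$; everything else is a direct citation of Lemma \ref{fl} and Proposition \ref{pp}. One should double-check the characteristic $2$ case is not excluded (Proposition \ref{pp} is stated without assuming $p_0\neq 2$, so it is fine), and that the equality $a_f^{p^f}=-a_f$ is used correctly when $p_0=2$ (there it just says $a_f^{p^f}=a_f$, i.e.\ $a_f\in\mathbb{F}_{p^f}$, and the same manipulation $\zeta^{p^f}a_f=\zeta a_f$ with $a_f\neq 0$ still gives $\zeta^{p^f}=\zeta$).
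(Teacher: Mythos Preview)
Your proof is correct and follows essentially the same route as the paper: reduce via Lemma \ref{fl} to the maximality of each twist $\overline{C}_{\zeta R}$, apply Proposition \ref{pp} to each, and for the direction (1) $\Rightarrow$ (2) combine the $\zeta=1$ case with the general $\zeta$ case (using $a_f^{p^f}=-a_f$ and $a_f\neq 0$) to force $\zeta^{p^f}=\zeta$ and hence $r\mid f$. Your extra remark on characteristic $2$ is also in line with the paper, which states Proposition \ref{pp} without that restriction.
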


\begin{proof}
We show $(1) \Rightarrow (2)$. 
Let $\zeta \in \mathbb{F}_{p^r}^{\times}$ be any element.
By Lemma \ref{fl}, the curve $\overline{C}_{\zeta R}$ is 
$\mathbb{F}_{p^{2f}}$-maximal.
From Proposition \ref{pp}, it follows that  $a_0=\cdots=a_{f-1}=0$ and 
$(\zeta a_f)^{p^f}+\zeta a_f=0$.
By taking $\zeta = 1$, we obtain $a_f^{p^f}+a_f=0$. 
Hence
\[
(\zeta a_f)^{p^f}+\zeta a_f = (-\zeta^{p^f}+\zeta) \cdot a_f = 0
\]
for every $\zeta \in \mathbb{F}_{p^r}$.
Since $a_f \neq 0$, we have $\zeta \in \mathbb{F}_{p^f}$.
Hence $\mathbb{F}_{p^r} \subset \mathbb{F}_{p^f}$.
Therefore, we have $f \equiv 0 \pmod{r}$.

We show $(2) \Rightarrow (1)$. From $f \equiv 0 \pmod{r}$, it follows that $\mathbb{F}_{p^r} \subset\mathbb{F}_{p^{2f}}$. 
We have $(\zeta a_f)^{p^f}+\zeta a_f=0$
for every $\zeta \in\mathbb{F}_{p^r}^{\times}$. 
By Proposition \ref{pp}, the curve 
$\overline{C}_{\zeta R}$ is $\mathbb{F}_{p^{2f}}$-maximal 
for every $\zeta \in \mathbb{F}_{p^r}^{\times}$. 
Thus (1) follows from Lemma \ref{fl}. 
\end{proof}

\subsection{A conjecture on the maximality of the curve $\overline{C}_{R,r}$}

Assume $e \geq 1$.
Let
\[
R(x) = \sum_{i=0}^e a_i x^{p^i} \in \mathbb{F}_{p^{2f}}[x]
  \qquad (a_e \neq 0)
\]
be an $\mathbb{F}_p$-linearized polynomial of degree $p^e$ over $\mathbb{F}_{p^{2f}}$.
We put 
\[
e_R \coloneqq \mathrm{gcd}\{i \geq 1 \mid a_i \neq 0\}. 
\]

We propose the following conjecture.

\begin{conjecture}\label{conj}
If $e \geq 1$ and 
$\overline{C}_{R,r}$ is $\mathbb{F}_{p^{2f}}$-maximal,
then $\mathrm{gcd}(e_R, f) \equiv 0 \pmod{r}$. 
\end{conjecture}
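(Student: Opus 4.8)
The natural first move is Lemma~\ref{fl}, which reduces the problem to the twists: if $\overline{C}_{R,r}$ is $\mathbb{F}_{p^{2f}}$-maximal, then $\mathbb{F}_{p^r}\subseteq\mathbb{F}_{p^{2f}}$ and $\overline{C}_{\zeta R}$ is $\mathbb{F}_{p^{2f}}$-maximal for \emph{every} $\zeta\in\mathbb{F}_{p^r}^{\times}$. Since Lemma~\ref{ail} only yields $r\mid 2f$, the real content is to squeeze out of the \emph{simultaneous} maximality of the whole family $\{\overline{C}_{\zeta R}\}_{\zeta\in\mathbb{F}_{p^r}^{\times}}$ the two divisibilities $r\mid f$ and $r\mid e_R$; their conjunction is exactly $\gcd(e_R,f)\equiv 0\pmod r$. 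I would establish these two divisibilities separately.

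For $r\mid f$ I would imitate the argument behind Proposition~\ref{c1}, which is the case $e=f$ (there $R$ is forced to be the monomial $a_f x^{p^f}$, and the mechanism is: $a_f^{p^f}+a_f=0$ together with $(\zeta a_f)^{p^f}+\zeta a_f=0$ for all $\zeta\in\mathbb{F}_{p^r}$ forces $\zeta^{p^f}=\zeta$, i.e.\ $r\mid f$). When $p_0\neq 2$, Proposition~\ref{split} shows that $\mathbb{F}_{p^{2f}}$-maximality of $\overline{C}_{\zeta R}$ forces $V_{\zeta R}=\Ker E_{\zeta R}\subseteq\mathbb{F}_{p^{2f}}$, and then Theorem~\ref{tt} pins down each Frobenius eigenvalue $\tau_\xi$ attached to $x\cdot\zeta R(x)$ in terms of $c_{A_\zeta}$, $\eta$, and a quadratic Gauss sum. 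Unwinding how $E_{\zeta R}$ and $c_{A_\zeta}$ depend on $\zeta$ through \eqref{er} and \eqref{cc1}, maximality should become a system of coefficient identities of the shape $(\zeta^{m}b)^{p^{f}}=-\,\zeta^{m}b$, with $b$ built from the $a_i$ and $m$ running through certain exponents; requiring these for \emph{all} $\zeta\in\mathbb{F}_{p^r}^{\times}$ and using that no nonconstant monomial in $\zeta$ is constant on $\mathbb{F}_{p^r}^{\times}$ should force $\mathbb{F}_{p^r}\subseteq\mathbb{F}_{p^f}$. For $p_0=2$ I would replace Theorem~\ref{tt} by the direct point-count of Proposition~\ref{pp}.

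For $r\mid e_R$ the plan is to reduce to the monomial case, where the conjecture is already known by Coulter's classification \cite[Theorem~8.12]{C}. The available handle is the substitution $x\mapsto\mu x$, which identifies $\overline{C}_{R,r}$ with $\overline{C}_{R',r}$ for the $\mathbb{F}_p$-linearized polynomial $R'(x)=\sum_i a_i\mu^{p^i+1}x^{p^i}$; for $\mu\in\mathbb{F}_{p^{e_R}}^{\times}$ one has $\mu^{p^i+1}=\mu^{2}$ for every $i$ in the support of $R$, so that $\overline{C}_{\mu^2 R,r}\cong\overline{C}_{R,r}$ over $\mathbb{F}_{p^{e_R}}$. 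Combining this family of isomorphisms with Frobenius descent along $\Gal(\mathbb{F}/\mathbb{F}_p)$, one would try to peel off the coefficients $a_i$ with $e_R\nmid i$ and isolate a single graded piece of $R$ — a monomial — to which \cite[Theorem~8.12]{C} applies, yielding $r\mid e_R$.

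The crux — and the reason the statement is only a conjecture — is this last reduction. Passing from $R$ to $\zeta R$ does \emph{not} merely rescale $E_R$: the Frobenius twists $\zeta^{p^{e-i}}$ in \eqref{er} mix the graded pieces of $E_R$ indexed by residues modulo $e_R$, so controlling how the splitting fields of the $E_{\zeta R}$ interact as $\zeta$ ranges over $\mathbb{F}_{p^r}^{\times}$ is not formal. A complete proof would presumably require either a finer description of the $\Gal(\mathbb{F}/\mathbb{F}_p)$-module $V_{\zeta R}$ as a function of $\zeta$, or a direct cohomological evaluation of $\tau_\xi$ for the sheaf $\mathscr{L}_\psi(x\,\zeta R(x))$ exhibiting the dependence on $\gcd(e_R,f)$ explicitly. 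For now the conjecture is supported by the case $e=f$ (Proposition~\ref{c1}), the monomial case \cite[Theorem~8.12]{C}, and the examples with $R(x)=2x^p+x$ worked out in Section~\ref{Further examples}.
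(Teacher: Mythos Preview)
The statement you are asked to prove is a \emph{Conjecture} in the paper, not a theorem: the paper does not prove it. What the paper offers is Proposition~\ref{mon}, which verifies the conjecture in the two special cases $e=f$ and $R$ a monomial, together with the examples of Section~\ref{Further examples} (e.g.\ Theorem~\ref{lcc} and Corollary~\ref{lcc2}) as further evidence. There is no ``paper's own proof'' to compare against.

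Your proposal is, correspondingly, not a proof but a heuristic outline---and you say so yourself in the final paragraph. The reduction via Lemma~\ref{fl} to simultaneous maximality of all $\overline{C}_{\zeta R}$ for $\zeta\in\mathbb{F}_{p^r}^{\times}$ is correct and is exactly how the paper proves the known cases. Your observation that $x\mapsto\mu x$ with $\mu\in\mathbb{F}_{p^{e_R}}$ gives $\overline{C}_{R,r}\cong\overline{C}_{\mu^{2}R,r}$ is also correct. But the two steps you would need---deducing $r\mid f$ from coefficient identities ``of the shape $(\zeta^{m}b)^{p^{f}}=-\zeta^{m}b$'' and then ``peeling off'' the non-monomial part of $R$ via Frobenius descent---are not carried out, and there is no indication that they can be. For general $e<f$, maximality of $\overline{C}_{\zeta R}$ does not reduce to a single clean identity like $a_f^{p^f}+a_f=0$; it is governed by the full splitting behaviour of $E_{\zeta R}$ (Proposition~\ref{split}) and by the Gauss-sum formula of Theorem~\ref{tt}, neither of which depends on $\zeta$ in a way that transparently forces $\zeta^{p^f}=\zeta$. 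The obstruction you name---that the $\zeta^{p^{e-i}}$ in \eqref{er} mix graded pieces---is real and is precisely why the statement remains open.

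In short: there is nothing to grade against, and your write-up is an honest summary of the state of affairs rather than a proof.
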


We give some evidences for 
Conjecture \ref{conj}.

\begin{proposition}\label{mon}
Conjecture \ref{conj} holds true if one of the following conditions holds. 

\begin{itemize}
\item[{\rm (1)}] 
$e=f$. 
\item[{\rm (2)}] 
$R(x)$ is a monic polynomial of degree $p^e$, i.e.,\ $R(x) = a_e x^{p^e}$ for some $a_e \in \mathbb{F}_{p^{2f}}^{\times}$. 
\end{itemize}
\end{proposition}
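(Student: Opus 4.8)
The plan is to reduce each of the two cases to a result already established above. Case \textup{(1)}, $e=f$: here Proposition \ref{c1} gives a complete description, namely that $\overline{C}_{R,r}$ is $\mathbb{F}_{p^{2f}}$-maximal if and only if $f \equiv 0 \pmod{r}$, $a_0 = \cdots = a_{f-1}=0$, and $a_f^{p^f}+a_f=0$. So, assuming $\overline{C}_{R,r}$ is $\mathbb{F}_{p^{2f}}$-maximal, $R(x)$ must be the monomial $a_f x^{p^f}$ with $a_f=a_e \neq 0$, whence $\{\,i\geq 1 \mid a_i \neq 0\,\}=\{f\}$ and $e_R=f$; combined with $f \equiv 0 \pmod{r}$ this yields $\gcd(e_R,f)=f\equiv 0 \pmod{r}$. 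There is no real obstacle in this case: it is just an inspection of Proposition \ref{c1}.

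Case \textup{(2)}, $R(x)=a_e x^{p^e}$: now $e_R=e$, and $\overline{C}_{R,r}$ is the smooth compactification of the affine curve $z^{p^r}-z=a_e x^{p^e+1}$, an Artin--Schreier curve whose zeta function over all finite extensions was determined explicitly by Coulter. The plan is to invoke \cite[Theorem 8.12]{C} to read off the set of $m$ for which $\overline{C}_{R,r}$ is $\mathbb{F}_{p^m}$-maximal, to specialize to $m=2f$, and to simplify the resulting arithmetic conditions so as to obtain $r \mid e$ and $r \mid f$, i.e.\ $\gcd(e_R,f) \equiv 0 \pmod{r}$. An equivalent route, internal to the framework of this paper, is to first use Lemma \ref{fl} to reduce to the $\mathbb{F}_{p^{2f}}$-maximality of every twist $\overline{C}_{\zeta R}$ with $\zeta \in \mathbb{F}_{p^r}^{\times}$, defined by $y^p-y=\zeta a_e x^{p^e+1}$; by the supersingularity of these curves this amounts to the Weil sums $\sum_{x \in \mathbb{F}_{p^{2f}}}\psi\bigl(\Tr_{p^{2f}/p}(\zeta a_e x^{p^e+1})\bigr)$, for all nontrivial $\psi \in \mathbb{F}_p^{\vee}$, attaining their maximal value $p^{e+f}$; Coulter's evaluation of such sums then forces $\mathbb{F}_{p^r}$ to lie inside the subfield of $\mathbb{F}_{p^{2f}}$ controlling the value of the sum, which unwinds to $\gcd(e,f)\equiv 0 \pmod{r}$.

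The one place where genuine work is required, and hence the main obstacle, is Case \textup{(2)}: Coulter's theorem is stated in a somewhat different normalization and splits into several subcases according to the parity of $2f/\gcd(e,2f)$ (and, in borderline subcases, according to the residue of $p$); the task is therefore to align his notation with ours and to verify, subcase by subcase, that $\mathbb{F}_{p^{2f}}$-maximality forces both $r\mid e$ and $r\mid f$. Assembling the two cases proves Proposition \ref{mon}.
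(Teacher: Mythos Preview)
Your proposal is correct and follows essentially the same approach as the paper: Case~(1) is reduced to Proposition~\ref{c1}, and Case~(2) to Coulter's \cite[Theorem~8.12]{C}. The paper's proof is terser---for (2) it simply records that Coulter gives $r\mid e$ and $e\mid f$ (hence $\gcd(e_R,f)=e\equiv 0\pmod r$), which is slightly sharper than the $r\mid e$ and $r\mid f$ you aim for, but either conclusion suffices for the conjecture.
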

\begin{proof}
(1) It follows from 
Proposition \ref{c1}. 

(2) Since $\overline{C}_{R,r}$ is $\mathbb{F}_{p^{2f}}$-maximal,
we have
$e \equiv 0 \pmod{r}$ and
$f \equiv 0 \pmod{e}$
by Coulter's results \cite[Theorem 8.12]{C}. 
Hence the claim (2) follows. 
\end{proof}

\section{Further examples of maximal curves}
\label{Further examples}

In this section, we assume $p_0 \neq 2$, and consider the following $\mathbb{F}_p$-linearized polynomial
\[ R(x) \coloneqq 2x^p+x. \]

Our main aim 
in this section is to study the maximality of
the curve $\overline{C}_{R}$ and its twists
$\{\overline{C}_{\zeta R}\}_{\zeta \in \mathbb{F}^{\times}}$
in some detail.
We also study the curves $\overline{C}_{R,r}$ for $r \geq 1$.
We find maximal curves among them.
Using Lemma \ref{crl},
we also find integers $r$ such that  
the curve $\overline{C}_{R,r}$ is not 
$\mathbb{F}_{p^k}$-maximal for any $k \geq 1$.


\subsection{Criteria for the maximality}
\label{subsection criterion}

Assume $p_0 \neq 2$, and let $R(x) \coloneqq 2x^p+x$.
Let $\zeta \in \mathbb{F}^{\times}$.
We have
\[
  E_{\zeta R}(x) = 2 ( \zeta^p x^{p^2} + \zeta^p x^p + \zeta x),
\]
and
\begin{equation}\label{ed}
  V_{\zeta R} = \{ x \in \mathbb{F} \mid \zeta^p x^{p^2} + \zeta^p x^p + \zeta x = 0\}.
\end{equation}
Take an element $\xi \in V_{\zeta R} \setminus \{0\}$.
Then we have
\[
  \zeta^p \xi^{p^2} + \zeta^p \xi^p + \zeta \xi = 0.
\]
We take a positive integer $d \geq 1$
such that $\xi, \zeta \in \mathbb{F}_{p^d}$.

\begin{lemma}\label{quot}
The curve $C_{\zeta R}$ is isomorphic to the curve $C'$
over $\mathbb{F}_{p^d}$ defined by the equation
\[
  y^p-y = -\zeta \xi^{p+1}(x^p-x)^2.
\]
\end{lemma}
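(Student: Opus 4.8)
The plan is to exhibit an explicit change of coordinates on the affine curve $C_{\zeta R}\colon z^p - z = x R_\zeta(x)$, where $R_\zeta(x) = \zeta R(x) = 2\zeta x^p + \zeta x$, that converts the right-hand side into the quadratic form $-\zeta\xi^{p+1}(x^p-x)^2$. First I would write out $x R_\zeta(x) = 2\zeta x^{p+1} + \zeta x^2$ and try to complete the square: the key observation is that $2\zeta x^{p+1} + \zeta x^2$ should be expressible, modulo an Artin--Schreier coboundary (something of the form $w^p - w$ with $w \in \mathbb{F}_{p^d}[x]$), as a constant times $(x^p - x)^2$. Since $(x^p-x)^2 = x^{2p} - 2x^{p+1} + x^2$, the natural guess is that $-\zeta\xi^{p+1}(x^p-x)^2$ and $x R_\zeta(x)$ differ by $w^p - w$ for a suitable $w$; one expects $w$ to be an $\mathbb{F}_p$-linear combination of monomials $x^{2p^i}$ and $x^{p^i+1}$-type terms whose coefficients are forced by the defining relation $\zeta^p\xi^{p^2} + \zeta^p\xi^p + \zeta\xi = 0$. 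The map $C_{\zeta R} \to C'$ would then be $(x,z) \mapsto (x,\, z - w(x))$ combined possibly with a scaling $x \mapsto cx$ of the $x$-coordinate.

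The key steps, in order, are: (i) set up the candidate coboundary $w(x) \in \mathbb{F}_{p^d}[x]$ and compute $w(x)^p - w(x)$; (ii) verify the identity $x R_\zeta(x) = -\zeta\xi^{p+1}(x^p-x)^2 + \bigl(w(x)^p - w(x)\bigr)$ in $\mathbb{F}_{p^d}[x]$, where the relation $\zeta^p\xi^{p^2} + \zeta^p\xi^p + \zeta\xi = 0$ will be used precisely to cancel the unwanted cross terms and fix the constant $-\zeta\xi^{p+1}$; (iii) conclude that $(x,z) \mapsto (x, z - w(x))$ defines an isomorphism of affine curves $C_{\zeta R} \xrightarrow{\sim} C'$ over $\mathbb{F}_{p^d}$, with inverse $(x,y) \mapsto (x, y + w(x))$. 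Possibly a linear substitution $x \mapsto \xi^{-1} x$ or similar is needed before the square can be completed cleanly, in which case that should be folded into step (iii). All of this takes place over $\mathbb{F}_{p^d}$ because that is the field containing both $\xi$ and $\zeta$, so the morphism is automatically defined over $\mathbb{F}_{p^d}$.

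The main obstacle I anticipate is step (ii): finding the correct coboundary $w(x)$. Because $xR_\zeta(x)$ has degree $p+1$ while $(x^p-x)^2$ has degree $2p$, the substitution cannot be a naive completion of the square in the usual sense — the degrees don't match — so $w$ must be chosen so that its $p$-th power supplies the missing top-degree term $x^{2p}$. Concretely, a term like $\frac{}{}x^{p+1}$ in $w$ contributes $x^{p^2+p}$ to $w^p$, which is again the wrong degree, so one is really led to iterate: $w$ should involve terms whose $p$-th powers telescope down, and the relation $E_{\zeta R}(\xi) = 0$ (equivalently $\zeta^p\xi^{p^2} = -\zeta^p\xi^p - \zeta\xi$) is exactly what allows one to reduce $x^{p^2}$-type contributions back down into the span of $x^{2p}, x^{p+1}, x^2$. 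Getting the bookkeeping of these coefficients right — and checking that the resulting constant in front of $(x^p-x)^2$ is exactly $-\zeta\xi^{p+1}$ and not some other multiple — is the delicate part; the rest is a routine verification that the coordinate change is invertible.
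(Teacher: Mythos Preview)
Your approach is essentially the one in the paper: an Artin--Schreier coboundary together with the scaling $x\mapsto \xi^{-1}x$. The paper takes $\eta \coloneqq -\zeta/\xi^{p-1}$ and sets $w(x)=\eta^{1/p}x^2$; then $y=z+w(x)$ gives
\[
y^p-y=2\zeta x^{p+1}+(\zeta-\eta^{1/p})x^2+\eta x^{2p}=\eta\bigl(x^p+(\zeta/\eta)x\bigr)^2,
\]
where the single identity $\zeta-\eta^{1/p}=\zeta^2/\eta$ (obtained from $E_{\zeta R}(\xi)=0$ by dividing by $\xi^p$ and taking the $p$-th root) is all that is needed. After the substitution $x_1=\xi^{-1}x$ one has $x^p+(\zeta/\eta)x=\xi^p(x_1^p-x_1)$ and $\eta\xi^{2p}=-\zeta\xi^{p+1}$, which finishes.

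So your worry in step~(ii) about iterated coboundaries and $x^{p^2}$-terms is unfounded: the coboundary is just a constant times $x^2$, and its $p$-th power supplies exactly the needed $x^{2p}$ term with no higher-degree residue. The only subtlety you did not flag is that this constant is a $p$-th root, $\eta^{1/p}$; since Frobenius is an automorphism of $\mathbb{F}_{p^d}$, this root exists and lies in $\mathbb{F}_{p^d}$, so the map is indeed defined over $\mathbb{F}_{p^d}$.
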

\begin{proof}
Let $\eta=-\zeta/\xi^{p-1}$. 
We have the isomorphism 
\[
C_{\zeta R} \xrightarrow{\sim} C';\ 
(x,z) \mapsto (\xi^{-1}x,z+\eta^{1/p}x^2).
\]
It suffices to check that this is well-defined.  
By \eqref{ed}, 
we have $(\zeta\xi)^p+\zeta \xi=-\zeta^p\xi^{p^2}$. 
Dividing this by 
$\xi^p$, substituting 
$\eta=-\zeta/\xi^{p-1}$ and 
taking the $p$-th root, 
we obtain $\zeta-\eta^{1/p}=\zeta^2/\eta$.  
By setting $y=z+\eta^{1/p} x^2$, 
we compute 
\begin{align*}
y^p-y &= z^p-z+\eta x^{2p}-\eta^{1/p}x^2 \\
&=2 \zeta x^{p+1}+(\zeta-\eta^{1/p}) x^2+\eta x^{2p} \\
&=2 \zeta x^{p+1}+(\zeta^2/\eta) x^2+\eta x^{2p} \\
&=\eta(x^p+(\zeta/\eta)x)^2.
\end{align*}
Further, by putting $x_1=\xi^{-1}x$, we have 
$
(x^p+(\zeta/\eta)x)^2=\xi^{2p}(x_1^p-x_1)^2. 
$
Hence the claim follows from 
$\eta \xi^{2p}=-\zeta \xi^{p+1}$. 
\end{proof}
For a representation $M$ over 
$\overline{\mathbb{Q}}_{\ell}$
of a finite abelian group 
$G$ and a character $\chi \in G^{\vee}$, let 
$M[\chi]$ denote the $\chi$-isotypic part of 
$M$. 
\begin{lemma}\label{imp}
 
\begin{itemize}
\item[{\rm (1)}]
We have isomorphisms 
\[
H^1(\overline{C}_{\zeta R})
\xleftarrow{\sim}
H_{\rm c}^1(C_{\zeta R})
\simeq 
\bigoplus_{\psi \in 
\mathbb{F}_p^{\vee} \setminus \{1\}}
H_{\rm c}^1(\mathbb{A}^1,\mathscr{L}_{\psi}(x \zeta R(x))). 
\]
\item[{\rm (2)}] 
Let $\psi \in \mathbb{F}_p^{\vee}\setminus \{1\}$.
We have an isomorphism
\[
H_{\rm c}^1(\mathbb{A}^1,\mathscr{L}_{\psi}(x \zeta R(x))) \simeq 
\bigoplus_{a \in \mathbb{F}_p}
H_{\rm c}^1(\mathbb{A}^1,\mathscr{L}_{\psi}
(-\zeta \xi^{p+1} u^2+au)). 
\]
\item[{\rm (3)}] 
Let $\psi \in \mathbb{F}_p^{\vee}\setminus \{1\}$ and $a \in \mathbb{F}_p$.
The Frobenius element
$\mathrm{Fr}_{p^d}^\ast$ acts on
\[ H_{\rm c}^1(\mathbb{A}^1,\mathscr{L}_{\psi}
(-\zeta \xi^{p+1} u^2+au)) \]
as the scalar multiplication by 
\[
\psi_{p^d}(4^{-1} \zeta^{-1} \xi^{-(p+1)} a^2) \left(\frac{-\zeta}{p^d}\right) G(\psi_{p^d}). 
\]
\end{itemize}
\end{lemma}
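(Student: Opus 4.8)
I would prove the three parts in turn, reducing everything to standard properties of Artin--Schreier sheaves on $\mathbb{A}^1$ and of the quadratic Gauss sum, exactly in the spirit of \cite[Lemma 2.8, Corollary 3.6]{TT} and of the proof of Lemma \ref{ail}. For part~(1), the plan is to observe that the projection $C_{\zeta R}\to\mathbb{A}^1$, $(x,z)\mapsto x$, is a finite \'etale $\mathbb{F}_p$-torsor given by the Artin--Schreier equation $z^p-z=x\zeta R(x)$, so its pushforward of $\overline{\mathbb{Q}}_\ell$ splits as $\bigoplus_{\psi\in\mathbb{F}_p^\vee}\mathscr{L}_\psi(x\zeta R(x))$, the summand for $\psi=1$ being the constant sheaf. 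Taking $H^1_{\rm c}$ and using $H^1_{\rm c}(\mathbb{A}^1,\overline{\mathbb{Q}}_\ell)=0$ gives the displayed decomposition of $H^1_{\rm c}(C_{\zeta R})$. For the first isomorphism, $x\zeta R(x)=2\zeta x^{p+1}+\zeta x^2$ has degree $p+1$ prime to $p_0$, hence $\overline{C}_{\zeta R}\setminus C_{\zeta R}$ is a single point and the excision sequence forces $H^1_{\rm c}(C_{\zeta R})\xrightarrow{\sim}H^1(\overline{C}_{\zeta R})$; this is literally the argument of \cite[Lemma 2.8, Corollary 3.6]{TT}.

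For part~(2), I would first invoke Lemma \ref{quot}: the isomorphism $C_{\zeta R}\xrightarrow{\sim}C'$ covers the automorphism $x\mapsto\xi^{-1}x$ of $\mathbb{A}^1$, and its proof shows that after this substitution $x\zeta R(x)$ and $-\zeta\xi^{p+1}(x^p-x)^2$ differ by a polynomial of the form $w^p-w$ (the $w=\eta^{1/p}x^2$ appearing there). Since $\mathscr{L}_\psi(f)\simeq\mathscr{L}_\psi(g)$ whenever $f-g\in\{h^p-h\}$ and $\mathscr{L}_\psi(f)\otimes\mathscr{L}_\psi(g)\simeq\mathscr{L}_\psi(f+g)$, this gives $\mathscr{L}_\psi(x\zeta R(x))\simeq\mathscr{L}_\psi(-\zeta\xi^{p+1}(x^p-x)^2)$ on the base. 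Then I would regard $\pi\colon\mathbb{A}^1_x\to\mathbb{A}^1_u$, $x\mapsto u=x^p-x$, as a finite \'etale $\mathbb{F}_p$-torsor, so that $\pi_\ast\overline{\mathbb{Q}}_\ell\simeq\bigoplus_{a\in\mathbb{F}_p}\mathscr{L}_\psi(au)$ after parametrising $\mathbb{F}_p^\vee$ by $a\mapsto(t\mapsto\psi(at))$. As $-\zeta\xi^{p+1}(x^p-x)^2=\pi^\ast(-\zeta\xi^{p+1}u^2)$, the projection formula yields
\[
H^1_{\rm c}(\mathbb{A}^1_x,\mathscr{L}_\psi(-\zeta\xi^{p+1}(x^p-x)^2))\simeq\bigoplus_{a\in\mathbb{F}_p}H^1_{\rm c}(\mathbb{A}^1_u,\mathscr{L}_\psi(-\zeta\xi^{p+1}u^2+au)),
\]
which is the assertion of (2).

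For part~(3), note that $\zeta,\xi\in\mathbb{F}_{p^d}$ and $a\in\mathbb{F}_p\subset\mathbb{F}_{p^d}$, so all the sheaves are defined over $\mathbb{F}_{p^d}$. Put $b\coloneqq-\zeta\xi^{p+1}\in\mathbb{F}_{p^d}^\times$. Since $p_0\neq2$, completing the square $bu^2+au=b\bigl(u+\tfrac{a}{2b}\bigr)^2-\tfrac{a^2}{4b}$ and translating $v\coloneqq u+\tfrac{a}{2b}$ (an automorphism of $\mathbb{A}^1$ over $\mathbb{F}_{p^d}$) gives $\mathscr{L}_\psi(bu^2+au)\simeq\mathscr{L}_\psi(bv^2)\otimes\mathscr{L}_\psi\bigl(-\tfrac{a^2}{4b}\bigr)$ with $-\tfrac{a^2}{4b}=4^{-1}\zeta^{-1}\xi^{-(p+1)}a^2$. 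The constant sheaf $\mathscr{L}_\psi\bigl(-\tfrac{a^2}{4b}\bigr)$ is geometrically trivial of rank one, with $\mathrm{Fr}_{p^d}^\ast$ acting by $\psi_{p^d}(4^{-1}\zeta^{-1}\xi^{-(p+1)}a^2)$. Finally $H^1_{\rm c}(\mathbb{A}^1,\mathscr{L}_\psi(bv^2))$ is one-dimensional ($H^0_{\rm c}=H^2_{\rm c}=0$ by geometric non-triviality and a Swan conductor count), and by the Grothendieck--Lefschetz trace formula and the definition of the quadratic Gauss sum,
\[
\tr\bigl(\mathrm{Fr}_{p^d}^\ast\mid H^1_{\rm c}(\mathbb{A}^1,\mathscr{L}_\psi(bv^2))\bigr)=-\sum_{v\in\mathbb{F}_{p^d}}\psi_{p^d}(bv^2)=\left(\frac{b}{p^d}\right)G(\psi_{p^d}).
\]
Since $p$ is odd, $p+1$ is even, so $\xi^{p+1}$ is a square in $\mathbb{F}_{p^d}^\times$ and $\left(\frac{b}{p^d}\right)=\left(\frac{-\zeta}{p^d}\right)$. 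Multiplying the two contributions gives the scalar in the statement.

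The main obstacle is the chain of sheaf identifications in part~(2): one must check carefully that the isomorphism of Lemma \ref{quot}, which also alters the fibre coordinate by $\eta^{1/p}x^2$, changes the Artin--Schreier class of the base function only by an additive polynomial, and that the identification $\pi_\ast\overline{\mathbb{Q}}_\ell\simeq\bigoplus_a\mathscr{L}_\psi(au)$ is normalised so that completing the square in part~(3) produces \emph{exactly} $4^{-1}\zeta^{-1}\xi^{-(p+1)}a^2$ (and not, say, a wrong sign, or an extra quadratic-residue symbol from $\xi^{p+1}$). Everything else is bookkeeping with the fixed normalisations of $\mathscr{L}_\psi$ and of geometric Frobenius already used in Theorem \ref{tt}.
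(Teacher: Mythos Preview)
Your proposal is correct and follows essentially the same approach as the paper: part~(1) is the same citation of \cite[Lemma 2.8, Corollary 3.6]{TT}, part~(2) uses Lemma~\ref{quot} together with the projection formula for the Artin--Schreier cover $x\mapsto x^p-x$ exactly as the paper does, and part~(3) is the one-dimensional Gauss-sum computation via the trace formula. The only cosmetic difference is that in~(3) you complete the square on the sheaf level before applying the trace formula, whereas the paper applies the trace formula first and then evaluates the resulting sum $-\sum_{u}\psi_{p^d}(-\zeta\xi^{p+1}u^2+au)$ directly; both routes land on the same quadratic-residue and $\psi_{p^d}$ factors.
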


\begin{proof}
(1) It follows from \cite[Lemma 2.8 and 
Corollary 3.6]{TT}. 

(2) By Lemma \ref{quot}, 
we have an isomorphism 
$H_{\rm c}^1(C_{\zeta R})[\psi] \simeq 
H_{\rm c}^1(C')[\psi]$. 
Let $\pi \colon 
\mathbb{A}^1 \to \mathbb{A}^1;\ x_1 \mapsto
u \coloneqq x_1^p-x_1$. 
We note 
\[
\pi_{\ast}\overline{\mathbb{Q}}_{\ell}
\simeq \bigoplus_{\psi' \in \mathbb{F}_p^{\vee}}
\mathscr{L}_{\psi'}(u).
\]
From the projection formula, it follows that 
\begin{align*}
H_{\rm c}^1(\mathbb{A}^1,\mathscr{L}_{\psi}(x \zeta R(x))) &\simeq H_{\rm c}^1(C_{\zeta R})[\psi] \\ 
 &\simeq H_{\rm c}^1(C')[\psi]  \\
 & \simeq 
H_{\rm c}^1(\mathbb{A}^1,\pi^\ast
\mathscr{L}_{\psi}(-\zeta \xi^{p+1} u^2)) \\
& \simeq \bigoplus_{\psi' \in \mathbb{F}_p^{\vee}}
H_{\rm c}^1(\mathbb{A}^1,
\mathscr{L}_{\psi}(-\zeta \xi^{p+1} u^2) \otimes 
\mathscr{L}_{\psi'}(u)) \\
&\simeq 
\bigoplus_{a \in \mathbb{F}_p}
H_{\rm c}^1(\mathbb{A}^1,\mathscr{L}_{\psi}
(-\zeta \xi^{p+1} u^2+au)). 
\end{align*}

(3) Let
\[
  V_a \coloneqq H_{\rm c}^1(\mathbb{A}^1,\mathscr{L}_{\psi}
(-\zeta \xi^{p+1} u^2+au)).
\]
It is well-known that $ \dim V_a=1$ (cf.\ \cite[Lemma 3.3]{TT} for $e=0$). 
By the Grothendieck trace formula, $\mathrm{Fr}^\ast_{p^d}$
acts on $V_a$ as the scalar multiplication by 
\[
-\sum_{u \in \mathbb{F}_{p^d}}
\psi_{p^d}(-\zeta\xi^{p+1} u^2+a u). 
\]
This value equals 
\[
\psi_{p^d}(4^{-1} \zeta^{-1} \xi^{-(p+1)} a^2)  
\left(\frac{-\zeta \xi^{p+1}}{p^d}\right) G(\psi_{p^d}). 
\]
Here, we have $\displaystyle \left( \frac{\xi^{p+1}}{p^d} \right) = 1$ since $p+1$ is even.
The claim follows. 
\end{proof}

\begin{corollary}\label{ccc}
\begin{itemize}
\item[{\rm (1)}]
If $p^d \equiv 3 \pmod 4$,
the curve $\overline{C}_{\zeta R}$ is 
$\mathbb{F}_{p^{2p_0 d}}$-maximal. 

\item[{\rm (2)}] 
If $d$ is even and 
\[
\left(\frac{\zeta}{p^d}\right)
\left(\frac{-1}{p}\right)^{d/2}
= -1,
\]
the curve $\overline{C}_{\zeta R}$ 
is $\mathbb{F}_{p^{p_0 d}}$-maximal. 
\end{itemize}
\end{corollary}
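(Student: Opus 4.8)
The plan is to extract all Frobenius eigenvalues from Lemma \ref{imp} and raise them to the appropriate exponent. By Lemma \ref{imp} there is a decomposition
\[
H^1(\overline{C}_{\zeta R}) \simeq \bigoplus_{\psi \in \mathbb{F}_p^\vee \setminus \{1\}} \bigoplus_{a \in \mathbb{F}_p} V_a, \qquad V_a \coloneqq H_{\rm c}^1(\mathbb{A}^1, \mathscr{L}_\psi(-\zeta \xi^{p+1} u^2 + a u)),
\]
in which every $V_a$ is one-dimensional and $\mathrm{Fr}_{p^d}^\ast$ acts on it by the scalar $\lambda_{\psi,a} \coloneqq \psi_{p^d}(4^{-1} \zeta^{-1} \xi^{-(p+1)} a^2)\left(\frac{-\zeta}{p^d}\right) G(\psi_{p^d})$. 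Since $g(\overline{C}_{\zeta R}) = p(p-1)/2$ and there are exactly $p(p-1)$ pairs $(\psi,a)$, the scalars $\lambda_{\psi,a}$ are precisely the eigenvalues of $\mathrm{Fr}_{p^d}^\ast$ on $H^1(\overline{C}_{\zeta R})$. Hence for any multiple $k$ of $d$ the eigenvalues of $\mathrm{Fr}_{p^k}^\ast = (\mathrm{Fr}_{p^d}^\ast)^{k/d}$ are the $\lambda_{\psi,a}^{k/d}$, and by Lemma \ref{maximal minimal} (2) it is enough to check that $\lambda_{\psi,a}^{k/d} = -p^{k/2}$ for all $\psi,a$, with $k = 2 p_0 d$ in case (1) and $k = p_0 d$ in case (2).

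For (1), $k/d = 2 p_0$. The value $\psi_{p^d}(\,\cdot\,)$ is a $p_0$-th root of unity and $\left(\frac{-\zeta}{p^d}\right) \in \{\pm 1\}$, so both become $1$ after raising to the $2 p_0$-th power. Since $p^d \equiv 3 \pmod 4$, \eqref{gauss sum square} gives $G(\psi_{p^d})^2 = -p^d$, hence $G(\psi_{p^d})^{2 p_0} = (-p^d)^{p_0} = -p^{p_0 d}$ because $p_0$ is odd. Therefore $\lambda_{\psi,a}^{2 p_0} = -p^{p_0 d} = -p^{k/2}$ for all $\psi,a$, and (1) follows.

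For (2), $k/d = p_0$ is odd, so $\psi_{p^d}(\,\cdot\,)^{p_0} = 1$ and $\left(\frac{-\zeta}{p^d}\right)^{p_0} = \left(\frac{-\zeta}{p^d}\right)$, giving $\lambda_{\psi,a}^{p_0} = \left(\frac{-\zeta}{p^d}\right) G(\psi_{p^d})^{p_0} = \left(\frac{-\zeta}{p^d}\right) G(\psi_{p^d}) (p^d)^{(p_0-1)/2}$. Here the exact value of the quadratic Gauss sum is needed, not merely its square: since $d$ is even, the Hasse--Davenport relation yields $G(\psi_{p^d}) = G(\psi_{p^{d/2}})^2$, which by \eqref{gauss sum square} equals $\left(\frac{-1}{p^{d/2}}\right) p^{d/2}$ (in particular this is independent of $\psi$). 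Using the elementary identity $\left(\frac{-1}{p^{d/2}}\right) = \left(\frac{-1}{p}\right)^{d/2}$ and, since $d$ is even, $\left(\frac{-\zeta}{p^d}\right) = \left(\frac{\zeta}{p^d}\right)$, one obtains $\lambda_{\psi,a}^{p_0} = \left(\frac{\zeta}{p^d}\right)\left(\frac{-1}{p}\right)^{d/2} p^{k/2}$, which equals $-p^{k/2}$ exactly under the stated hypothesis $\left(\frac{\zeta}{p^d}\right)\left(\frac{-1}{p}\right)^{d/2} = -1$. This proves (2).

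The main obstacle is case (2): in (1) only $G(\psi_{p^d})^2$ intervenes, but in (2) the sign of $G(\psi_{p^d})$ itself matters, and determining it relies on the Hasse--Davenport relation — available precisely because $d$ is even, so that $\mathbb{F}_{p^d}/\mathbb{F}_{p^{d/2}}$ is a quadratic extension — together with careful bookkeeping of the several quadratic symbols so that the outcome matches the hypothesis.
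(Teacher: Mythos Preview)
Your proof is correct and follows essentially the same approach as the paper's: both parts reduce to computing the scalars from Lemma \ref{imp} (3) and raising them to the exponent $2p_0$ or $p_0$, using $G(\psi_{p^d})^2 = -p^d$ in (1) and the Hasse--Davenport identity $G(\psi_{p^d}) = G(\psi_{p^{d/2}})^2 = \left(\frac{-1}{p}\right)^{d/2} p^{d/2}$ in (2). The only cosmetic difference is that the paper packages the latter computation as Lemma \ref{HG}, while you carry it out inline.
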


\begin{proof}
(1) Since $p^d \equiv 3 \pmod 4$, we have
$\displaystyle
G(\psi_{p^{d}})^2=\displaystyle \left(\frac{-1}{p^d}\right) p^d=-p^d$.
The claim follows from Lemma \ref{imp} (3)
and $\psi_{p^d}^{p_0}=1$. 

(2) Since $d$ is even, we have
$\displaystyle
G(\psi_{p^d})
=\left(\frac{-1}{p}\right)^{d/2}
p^{d/2}$
and
$\displaystyle \left(\frac{-\zeta}{p^d}\right)=
\left(\frac{\zeta}{p^d}\right)$
by Lemma \ref{HG}.
The claim follows from Lemma \ref{imp} (3). 
\end{proof}

\subsection{Maximality of the twists $\overline{C}_{\zeta R}$}

In this subsection,
we study elements $\xi, \zeta$ satisfying
the conditions in Subsection \ref{subsection criterion}.
We find some maximal curves among
the twists $\overline{C}_{\zeta R}$.

Let $\alpha \in \mathbb{F}_p^{\times} \setminus 
\{-1\}$. 
We set
\[ \beta \coloneqq -(\alpha(\alpha+1))^{-1}. \]
Let $\xi, \zeta \in \mathbb{F}^{\times}$ be elements satisfying
\begin{equation}\label{xz}
\xi^{p-1} = \alpha, \qquad \zeta^{p-1} = \beta. 
\end{equation}

\begin{lemma}\label{lld}
\begin{itemize}
\item[{\rm (1)}] 
We have $\zeta^p \xi^{p^2} + \zeta^p \xi^p + \zeta \xi = 0$. 
\item[{\rm (2)}] Let $d_1$ be the order of $\alpha$
in $\mathbb{F}_p^{\times}$ and, 
let $d_2$ be the order of 
$\beta$ in $\mathbb{F}_p^{\times}$.   
We have $\mathbb{F}_p(\xi) = \mathbb{F}_{p^{d_1}}$
and $\mathbb{F}_{p}(\zeta) = \mathbb{F}_{p^{d_2}}$. 
\end{itemize}
\end{lemma}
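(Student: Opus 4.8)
The plan is to treat the two parts separately; both reduce to elementary manipulations with roots of unity over $\mathbb{F}_p$, using crucially that $\alpha$ and $\beta$ lie in $\mathbb{F}_p$.

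For (1), I would first divide the claimed identity by $\zeta\xi \in \mathbb{F}^{\times}$, so that it becomes
\[
\zeta^{p-1}\xi^{p^2-1} + \zeta^{p-1}\xi^{p-1} + 1 = 0.
\]
Since $\alpha = \xi^{p-1} \in \mathbb{F}_p^{\times}$ satisfies $\alpha^p = \alpha$, I would rewrite $\xi^{p^2-1} = (\xi^{p-1})^{p+1} = \alpha^{p+1} = \alpha^2$ and $\xi^{p-1} = \alpha$, together with $\zeta^{p-1} = \beta$, so that the left-hand side equals $\beta\alpha^2 + \beta\alpha + 1 = \beta\,\alpha(\alpha+1) + 1$. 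By the definition $\beta = -(\alpha(\alpha+1))^{-1}$ — which makes sense precisely because $\alpha \notin \{0,-1\}$ — this is $-1 + 1 = 0$.

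For (2), I would give the argument only for $\xi$; the statement for $\zeta$ is verbatim the same with $(\alpha, d_1)$ replaced by $(\beta, d_2)$. Writing $\mathbb{F}_p(\xi) = \mathbb{F}_{p^m}$, the key observation is that for every $m' \geq 1$,
\[
\xi^{p^{m'}-1} = (\xi^{p-1})^{1 + p + \cdots + p^{m'-1}} = \alpha^{1 + p + \cdots + p^{m'-1}} = \alpha^{m'},
\]
again because $\alpha \in \mathbb{F}_p$ forces $\alpha^{p^j} = \alpha$. Hence $\xi \in \mathbb{F}_{p^{m'}}$ if and only if $\alpha^{m'} = 1$, i.e.\ if and only if $d_1 \mid m'$. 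Taking $m' = d_1$ shows $\xi \in \mathbb{F}_{p^{d_1}}$, hence $m \mid d_1$; taking $m' = m$ gives $d_1 \mid m$. Therefore $m = d_1$, as desired.

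Both parts are routine, so there is no genuine obstacle here. The only points needing a moment's care are keeping track that $\alpha, \beta \in \mathbb{F}_p$ (so the geometric-series exponents collapse and $\xi^{p^2-1} = \alpha^2$), and recording that $\beta$ is well-defined exactly under the standing hypothesis $\alpha \neq -1$, with $\alpha \neq 0$ being automatic from $\alpha \in \mathbb{F}_p^{\times}$.
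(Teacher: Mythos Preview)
Your proof is correct and follows essentially the same approach as the paper. For (1) the paper factors $(\zeta\xi^p)^p+(\zeta\xi)^p+\zeta\xi = \xi\,\alpha(\alpha+1)(\zeta^p-\beta\zeta)$ directly rather than dividing by $\zeta\xi$, and for (2) the paper writes $\xi^{p^i}=\alpha^i\xi$ instead of $\xi^{p^i-1}=\alpha^i$, but these are cosmetic differences; the underlying ideas are identical.
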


\begin{proof}
(1) Since $\alpha \in \mathbb{F}_p$, we have $\alpha^p = \alpha$.
From \eqref{xz}, we calculate
\begin{align*}
(\zeta\xi^p)^p+(\zeta \xi)^p+\zeta \xi
&= \zeta^p(\xi^p+\xi)^p+\zeta \xi \\
&= \zeta^p((\alpha+1)\xi)^p+\zeta \xi \\
&= \zeta^p \alpha (\alpha+1)^p \xi+\zeta \xi \\
& =\xi\alpha(\alpha+1)(\zeta^p-\beta \zeta) \\
&=0. 
\end{align*}

(2) For every integer $i \geq 1$, we have 
$\xi^{p^i}=\alpha^i \xi$ by $\alpha \in 
\mathbb{F}_p$. 
By the definition of $d_1$,
we have $\xi \in \mathbb{F}_{p^{i}}$
(i.e.,\ we have $\xi^{p^i} = \xi$)
if and only if $i \equiv 0 \pmod{d_1}$.
Hence we have $\mathbb{F}_p(\xi) = \mathbb{F}_{p^{d_1}}$.
The assertion for $\zeta$ is proved similarly.
\end{proof}

Let $d \coloneqq \mathrm{lcm}(d_1,d_2)$.
By Lemma \ref{lld}, we have $\mathbb{F}_{p^d}=\mathbb{F}_p(\xi,\zeta)$. 

\begin{lemma}\label{35}
If $d$ is even, we have 
\[
\left(\frac{\zeta}{p^d}\right)=
\left(\beta^{d/2}\right)^{(p+1)/2}. 
\]
\end{lemma}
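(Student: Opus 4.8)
The plan is to compute $\left(\frac{\zeta}{p^d}\right) = \zeta^{(p^d-1)/2}$ directly using the relation $\zeta^{p-1} = \beta$ from \eqref{xz}. First I would write $(p^d-1)/2 = \frac{p^d-1}{p-1}\cdot\frac{p-1}{2}$; this is a valid factorization of integers since $d$ is even forces $p^d - 1 \equiv 0 \pmod{2(p-1)}$ (indeed $(p^d-1)/(p-1) = 1 + p + \cdots + p^{d-1}$ is a sum of $d$ odd terms, hence even when $d$ is even, while $(p-1)/2$ is an integer). Therefore
\[
\left(\frac{\zeta}{p^d}\right) = \zeta^{(p-1)\cdot\frac{1}{2}\cdot\frac{p^d-1}{p-1}} = \bigl(\zeta^{p-1}\bigr)^{\frac{1}{2}\cdot\frac{p^d-1}{p-1}} = \beta^{\frac{1}{2}\cdot\frac{p^d-1}{p-1}}.
\]
The exponent $\frac{1}{2}\cdot\frac{p^d-1}{p-1}$ is an integer precisely because $\frac{p^d-1}{p-1}$ is even, as just noted.

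Next I would reduce the exponent modulo the order of $\beta$ in $\mathbb{F}_p^\times$. Since $\beta \in \mathbb{F}_p^\times$, we have $\beta^{p-1} = 1$, and more usefully $\beta^{p^i} = \beta$ for all $i$, so $\frac{p^d-1}{p-1} = 1 + p + \cdots + p^{d-1} \equiv d \pmod{\text{ord}(\beta)}$ would be the naive hope — but that is not quite what I want. Instead, the cleaner route: $\beta^{\frac{1}{2}\cdot\frac{p^d-1}{p-1}} = \left(\beta^{\frac{p^d-1}{p-1}}\right)^{1/2}$ does not parse well over $\mathbb{F}_p$, so I would rather keep the exponent as $\frac{d}{2}\cdot\frac{2}{d}\cdot\frac{1}{2}\cdot\frac{p^d-1}{p-1}$... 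Let me instead argue: write $\frac{p^d-1}{p-1} = (1+p+\cdots+p^{d-1})$. Modulo $2(p-1)$ this is congruent to... Actually the target answer is $\left(\beta^{d/2}\right)^{(p+1)/2}$, i.e. $\beta^{\frac{d(p+1)}{4}}$ when that makes sense, or more precisely $\beta^{\frac{d}{2}\cdot\frac{p+1}{2}}$. So I want to show
\[
\frac{1}{2}\cdot\frac{p^d-1}{p-1} \equiv \frac{d}{2}\cdot\frac{p+1}{2} \pmod{\text{ord}(\beta)}.
\]
Since $\text{ord}(\beta)$ divides $p-1$, it suffices to check this congruence modulo $p-1$ (after clearing the factor of $2$ appropriately). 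Working modulo $p^2 - 1$: $\frac{p^d-1}{p-1} = \sum_{i=0}^{d-1} p^i$; grouping consecutive pairs (using $d$ even), $p^{2j} + p^{2j+1} = p^{2j}(1+p) \equiv 1 + p \pmod{p^2-1}$ since $p^{2j}\equiv 1$, so $\sum_{i=0}^{d-1}p^i \equiv \frac{d}{2}(1+p) \pmod{p^2-1}$, hence also $\pmod{2(p-1)}$ after noting $2(p-1)\mid p^2-1$. Dividing the relation $\sum p^i \equiv \frac{d}{2}(p+1) \pmod{2(p-1)}$ by $2$ gives $\frac{1}{2}\sum p^i \equiv \frac{d}{2}\cdot\frac{p+1}{2} \pmod{p-1}$, which is exactly what is needed since $\text{ord}(\beta)\mid p-1$.

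Combining, $\left(\frac{\zeta}{p^d}\right) = \beta^{\frac{1}{2}\cdot\frac{p^d-1}{p-1}} = \beta^{\frac{d}{2}\cdot\frac{p+1}{2}} = \left(\beta^{d/2}\right)^{(p+1)/2}$, as claimed. The only mildly delicate point — the place where I expect to spend the most care rather than encounter a genuine obstacle — is bookkeeping the factors of $2$: making sure that $d$ even genuinely guarantees $\frac{p^d-1}{p-1}$ is even (so that all the "divide by $2$" steps are legitimate over the integers), and that $\beta^{d/2}$ is a well-defined element of $\mathbb{F}_p^\times$ with $\left(\beta^{d/2}\right)^{(p+1)/2}$ meaning $\beta^{d(p+1)/4}$ only when $4\mid d(p+1)$, whereas in general the right-hand side should be read as the Legendre-type expression $\left(\beta^{d/2}\right)^{(p+1)/2}$ with $(p+1)/2$ an integer and $\beta^{d/2}\in\mathbb{F}_p^\times$, so no half-integer exponents ever actually appear.
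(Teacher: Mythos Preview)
Your argument is correct. After the false starts, the eventual line is clean: since $\zeta^{p-1}=\beta$ and $d$ even makes $\tfrac{p^d-1}{p-1}=\sum_{i=0}^{d-1}p^i$ an even integer, you get $\left(\tfrac{\zeta}{p^d}\right)=\beta^{\frac{1}{2}\sum p^i}$; pairing $p^{2j}+p^{2j+1}\equiv p+1\pmod{p^2-1}$ gives $\sum p^i\equiv \tfrac{d}{2}(p+1)\pmod{2(p-1)}$, and dividing by $2$ (both sides even, modulus $2(p-1)$) yields the exponent congruence modulo $p-1$ needed to conclude.

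This is a genuinely different route from the paper's. The paper first observes that $\beta^{d/2}\in\{\pm1\}$ (using that $d_2=\mathrm{ord}(\beta)$ divides $d$), then computes the norm via $\zeta^{p^i}=\zeta\beta^i$ to obtain $\left(\tfrac{\zeta}{p^d}\right)=\beta^{d/2}\cdot\bigl((\beta^{d/2})^{d-1}\bigr)^{(p-1)/2}$ and simplifies using $(\beta^{d/2})^{d-1}=\beta^{d/2}$. Your approach never invokes $\beta^{d/2}=\pm1$; it only uses $\beta\in\mathbb{F}_p^\times$, reducing the exponent modulo $p-1$ by an explicit congruence. This makes your argument slightly more self-contained (it does not depend on the particular definition $d=\mathrm{lcm}(d_1,d_2)$), at the cost of the extra bookkeeping with factors of $2$ that you flagged. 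Either way the computation is short; the paper's version trades your congruence trick for the single structural fact $\beta^d=1$.
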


\begin{proof}
Since $d$ is even, we have 
$\beta^{d/2} \in \{\pm 1\}$ by $\beta^d = (\beta ^{d_2})^{d/d_2} = 1$.   
We have
$\zeta^{d(p-1)/2}
=(\zeta^{p-1})^{d/2}=\beta^{d/2}$. 
Therefore, by $\zeta^{p^i} = \zeta \cdot \beta^i$
for an integer $i \geq 1$, we compute 
\begin{align*}
\left(\frac{\zeta}{p^d}\right) &=
\left(\zeta^{p^{d-1}+\cdots+p+1}\right)^{(p-1)/2} 
= \left(\zeta^{d} \cdot \beta^{(d-1)+\cdots+2+1}
\right)^{(p-1)/2} \\
&= \zeta^{d(p-1)/2} \cdot ( \beta^{(d-1)d/2}
)^{(p-1)/2}
= \beta^{d/2} \cdot ( ( \beta^{d/2} )^{d-1} 
)^{(p-1)/2}
\end{align*}
Since $d-1$ is odd and $\beta^{d/2} \in \{\pm 1\}$,
we have $( \beta^{d/2} )^{d-1} = \beta^{d/2}$.
Therefore, we have
\[
\left(\frac{\zeta}{p^d}\right) 
= \beta^{d/2} \cdot ( \beta^{d/2}
)^{(p-1)/2}
= ( \beta^{d/2}
)^{(p+1)/2}.
\]
\end{proof}

\begin{theorem}\label{mp}
Assume $p_0 \neq 2$, and put $R(x)=2x^p+x$.
Let $\alpha \in \mathbb{F}_p^{\times} \setminus 
\{-1\}$. 
We set $\beta \coloneqq -(\alpha(\alpha+1))^{-1}$.
Let $\xi, \zeta \in \mathbb{F}^{\times}$ be elements satisfying
$\xi^{p-1} = \alpha$ and $\zeta^{p-1} = \beta$.
Let $d_1$ be the order of $\alpha$
in $\mathbb{F}_p^{\times}$, and
$d_2$ the order of 
$\beta$ in $\mathbb{F}_p^{\times}$.
We put $d \coloneqq \mathrm{lcm}(d_1,d_2)$.
\begin{itemize}
\item[{\rm (1)}]
If $d$ is odd and $p \equiv 3 \pmod 4$,
the curve $\overline{C}_{\zeta R}$ is 
$\mathbb{F}_{p^{2p_0d}}$-maximal.

\item[{\rm (2)}]
If $d$ is even and $\beta^{d/2}=-1$,
the curve $\overline{C}_{\zeta R}$ is 
$\mathbb{F}_{p^{p_0d}}$-maximal. 
\end{itemize}
\end{theorem}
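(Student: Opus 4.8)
The plan is to deduce both parts from Corollary~\ref{ccc}, once we check that the elements $\xi,\zeta$ and the integer $d=\mathrm{lcm}(d_1,d_2)$ of the theorem are exactly the data to which that corollary applies. First I would record the set-up: by Lemma~\ref{lld}(1) we have $\zeta^{p}\xi^{p^{2}}+\zeta^{p}\xi^{p}+\zeta\xi=0$, hence $E_{\zeta R}(\xi)=0$, so $\xi\in V_{\zeta R}\setminus\{0\}$ by \eqref{ed}; and by Lemma~\ref{lld}(2) together with the definitions of $d_1$ and $d_2$ we have $\xi,\zeta\in\mathbb{F}_{p}(\xi,\zeta)=\mathbb{F}_{p^{d}}$. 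Thus $\xi$, $\zeta$, $d$ satisfy the running assumptions of Subsection~\ref{subsection criterion}, and Corollary~\ref{ccc} is available with this choice.

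For part (1), since $p\equiv3\pmod4$ and $d$ is odd we get $p^{d}\equiv(-1)^{d}\equiv3\pmod4$, so Corollary~\ref{ccc}(1) immediately gives that $\overline{C}_{\zeta R}$ is $\mathbb{F}_{p^{2p_0 d}}$-maximal.

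For part (2), $d$ is even, so by Corollary~\ref{ccc}(2) it suffices to show $\left(\frac{\zeta}{p^{d}}\right)\left(\frac{-1}{p}\right)^{d/2}=-1$. Using Lemma~\ref{35} and $\beta^{d/2}=-1$ one has $\left(\frac{\zeta}{p^{d}}\right)=(\beta^{d/2})^{(p+1)/2}=(-1)^{(p+1)/2}$, so the left-hand side equals $(-1)^{\,1+\frac{p-1}{2}\left(1+\frac{d}{2}\right)}$; I must show the exponent is odd, i.e. that $\frac{p-1}{2}$ or $1+\frac{d}{2}$ is even, equivalently that $p\equiv1\pmod4$ or $d\equiv2\pmod4$. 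When $p\equiv1\pmod4$ this is clear. When $p\equiv3\pmod4$ the hypotheses already force $d\equiv2\pmod4$: indeed $\beta^{d/2}=-1$ forces $d_2$ even (otherwise $d_2\mid d/2$ and $\beta^{d/2}=1$), while $d_1,d_2\mid p-1$ and $v_2(p-1)=1$, so $v_2(d)=\max(v_2(d_1),v_2(d_2))=1$. Hence the required identity holds in all cases, and Corollary~\ref{ccc}(2) yields that $\overline{C}_{\zeta R}$ is $\mathbb{F}_{p^{p_0 d}}$-maximal.

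The only step that is not routine bookkeeping is this last observation in part~(2): that for $p\equiv3\pmod4$ the assumption $\beta^{d/2}=-1$ already pins down $d$ modulo~$4$. Without it the parity computation does not close, so that is where the content of part~(2) sits; everything else is assembled directly from Lemmas~\ref{lld} and \ref{35} and Corollary~\ref{ccc}, whose Gauss-sum evaluations already carry out the analytic work.
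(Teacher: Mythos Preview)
Your proof is correct and follows essentially the same route as the paper: verify via Lemma~\ref{lld} that $\xi,\zeta,d$ fit the setup of Subsection~\ref{subsection criterion}, then invoke Corollary~\ref{ccc}(1) for part~(1) and combine Lemma~\ref{35} with Corollary~\ref{ccc}(2) for part~(2), splitting on $p\bmod 4$. The only difference is cosmetic: in the case $p\equiv 3\pmod 4$ the paper observes directly that $d\mid p-1$ and $v_2(p-1)=1$ force $d/2$ odd, whereas you first argue $d_2$ is even---this step is correct but unnecessary, since $d$ is already assumed even.
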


\begin{proof}
(1) By Lemma \ref{lld},
the elements $\xi, \zeta \in \mathbb{F}^{\times}$
satisfy the conditions in Subsection \ref{subsection criterion}.
Therefore, the claim follows from Corollary \ref{ccc} (1).

(2) By Lemma \ref{35}, we have
 \begin{equation}\label{beta}
 \left(\frac{\zeta}{p^d}\right)=(-1)^{(p+1)/2} = -\left(\frac{-1}{p}\right)
 \end{equation}
since $\beta^{d/2}=-1$.

If $p \equiv 1 \pmod 4$, we have 
$\displaystyle \left(\frac{-1}{p}\right)=1$,
and the claim follows from Corollary \ref{ccc} (2).

If $p \equiv 3 \pmod4$, the integer $d/2$ is odd
since $d$ divides $|\mathbb{F}_p^{\times}| = p-1$.
Hence we have
\[
  \left(\frac{\zeta}{p^d}\right)
  \left(\frac{-1}{p}\right)^{d/2}
  =
  - \left(\frac{-1}{p}\right)
  \left(\frac{-1}{p}\right)
  = -1,
\]
and the claim follows from Corollary \ref{ccc} (2).
\end{proof}

\begin{corollary}\label{-2}
Let  
$\alpha=1$, $\beta = - 2^{-1}$, and $\xi=1$.
Let $\zeta \in \mathbb{F}^{\times}$
be an element satisfying $\zeta^{p-1} = - 2^{-1}$.
Let $d$ be the order of $-2$ in $\mathbb{F}_{p_0}^{\times}$.
\begin{itemize}
\item[{\rm (1)}] 
If $d$ is odd and $p \equiv 3 \pmod 4$, 
the curve $\overline{C}_{\zeta R}$ 
is $\mathbb{F}_{p^{2p_0d}}$-maximal. 
If $p_0 \equiv 3 \pmod{8}$, 
the order $d$ is odd.  

\item[{\rm (2)}] 
If $d$ is even, 
the curve $\overline{C}_{\zeta R}$ 
is $\mathbb{F}_{p^{p_0d}}$-maximal. 
If $p_0 \equiv 5,7 \pmod{8}$, the order 
$d$ is even. 
\end{itemize}
\end{corollary}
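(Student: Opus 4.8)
The plan is to deduce Corollary \ref{-2} directly from Theorem \ref{mp}, after checking that the data $\alpha = 1$, $\xi = 1$, $\beta = -2^{-1}$, $\zeta$ fit the hypotheses there, and then verifying the two elementary arithmetic assertions about the parity of the order of $-2$.

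First I would observe that $\alpha = 1 \in \mathbb{F}_p^{\times} \setminus \{-1\}$ (using $p_0 \neq 2$), that $\beta = -(\alpha(\alpha+1))^{-1} = -2^{-1} = (-2)^{-1}$, and that $\xi = 1$ satisfies $\xi^{p-1} = 1 = \alpha$, so these elements are exactly the data appearing in Theorem \ref{mp}. In the notation of that theorem, $d_1$ is the order of $\alpha = 1$, hence $d_1 = 1$, while $d_2$ is the order of $\beta = (-2)^{-1}$ in $\mathbb{F}_p^{\times}$. Since $-2 \in \mathbb{F}_{p_0}^{\times} \subseteq \mathbb{F}_p^{\times}$, the order of $-2$ is the same whether computed in $\mathbb{F}_{p_0}^{\times}$ or in $\mathbb{F}_p^{\times}$, and inversion preserves orders, so $d_2$ equals the integer $d$ of the corollary; thus $\mathrm{lcm}(d_1, d_2) = d$ agrees with the $d$ in Theorem \ref{mp}. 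Part (1) of the corollary is then precisely Theorem \ref{mp}(1). For part (2), when $d$ is even the element $\beta^{d/2}$ has order $2$ in the cyclic group $\mathbb{F}_p^{\times}$, hence $\beta^{d/2} = -1$; so the hypothesis of Theorem \ref{mp}(2) is automatic and part (2) follows.

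It remains to establish the parenthetical claims about when $d$ is odd or even, and here I would use the standard supplements to quadratic reciprocity: $\left(\frac{-1}{p_0}\right) = 1$ iff $p_0 \equiv 1 \pmod 4$, and $\left(\frac{2}{p_0}\right) = 1$ iff $p_0 \equiv \pm 1 \pmod 8$. A short case check gives $\left(\frac{-2}{p_0}\right) = 1$ when $p_0 \equiv 3 \pmod 8$, and $\left(\frac{-2}{p_0}\right) = -1$ when $p_0 \equiv 5$ or $7 \pmod 8$. For the odd-order claim in (1): when $p_0 \equiv 3 \pmod 8$ the $2$-adic valuation of $p_0 - 1$ is exactly $1$, so an element of $\mathbb{F}_{p_0}^{\times}$ has odd order if and only if it is a square; since $-2$ is a square in this case, $d$ is odd. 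For the even-order claim in (2): a non-square in $\mathbb{F}_{p_0}^{\times}$ (with $p_0$ odd) always has even order, and $-2$ is a non-square when $p_0 \equiv 5, 7 \pmod 8$, so $d$ is even.

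I do not expect a substantive obstacle here, since the corollary is a specialization. The only points requiring a little care are the passage between ``order of $-2$ in $\mathbb{F}_{p_0}^{\times}$'' and ``order of $\beta$ in $\mathbb{F}_p^{\times}$'' (immediate once one recalls that orders are intrinsic and invariant under inversion), and the observation that the implication ``square $\Rightarrow$ odd order'' is valid only when $v_2(p_0 - 1) = 1$, which is exactly the regime $p_0 \equiv 3 \pmod 8$ that part (1) concerns.
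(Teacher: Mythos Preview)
Your proposal is correct and follows essentially the same approach as the paper: both reduce the maximality claims directly to Theorem \ref{mp} (after noting $d_1=1$, $d_2=d$, and $\beta^{d/2}=-1$ when $d$ is even), and both handle the parity claims via the quadratic character of $-2$ modulo $p_0$. Your phrasing of the elementary parity arguments (squares have odd order when $v_2(p_0-1)=1$; non-squares always have even order) is a slight repackaging of the paper's divisibility argument, but the content is identical.
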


\begin{proof}
Note that $\beta^{d/2} = -1$ because $d$ is equal
to the order of $\beta$ in $\mathbb{F}_p^{\times}$.
The former claims in (2) and (3) are direct consequences of 
 Theorem \ref{mp}. 
 We recall that 
 \[
 \left(\frac{-2}{p_0}\right)=(-1)^{(p_0-1)(p_0+5)/8} = 1 \iff p_0 \equiv 1,3 \pmod{8}. 
 \]
 
We show the latter claim in (2). 
The assumption $p_0 \equiv 3 \pmod{8}$ implies
$\displaystyle \left( \frac{-2}{p_0} \right) = 1$.
Hence the order $d$ divides $(p_0-1)/2$, and 
$(p_0-1)/2$ is odd. Thus $d$ is odd. Hence the claim follows. 

We show the latter claim
in (3). 
The assumption 
$p_0 \equiv 5,7 \pmod{8}$ implies $\displaystyle \left(\frac{-2}{p_0}\right)=-1$. 
Hence $d$ divides $p_0-1$, and $(p_0 - 1)/d$ is odd.
Since $p_0 - 1$ is even, the order $d$ is even.
\end{proof}

Using Corollary \ref{-2},
it is easy to give explicit maximal curves.
Here we give an example.

\begin{example}
Let $p_0=11$ and $\alpha=3$.
Then $\beta=-1$, $d_1=5$, and $d_2=2$.  
Hence
$d = \mathrm{lcm}(d_1,d_2) = 10$.
Let $\zeta \in \mathbb{F}$
be an element satisfying $\zeta^{10} = -2^{-1}$.
Since $d$ is even and $\beta^{d/2}=-1$, 
the curve $\overline{C}_{\zeta R}$ is 
$\mathbb{F}_{11^{110}}$-maximal by Theorem
\ref{mp} (2). 
\end{example}

\subsection{Maximality of the curve $\overline{C}_{R}$}

In the next theorem, we study the maximality of
the van der Geer--van der Vlugt curve $\overline{C}_{R}$.

We put $\zeta=1$. 
Then $\xi$ satisfies $\xi^{p^2}+\xi^p+\xi
=0$ by \eqref{ed}.
Hence we have
\[
  \xi^{p^3}-\xi = 
  (\xi^{p^2}+\xi^p+\xi)^p-(\xi^{p^2}+\xi^p+\xi)=0.
\]
Thus $\xi \in \mathbb{F}_{p^3}$.

\begin{theorem}\label{lc}
\begin{itemize}
\item[{\rm (1)}]
Assume $p \equiv 1 \pmod 4$. 
The curve $\overline{C}_R$ is $\mathbb{F}_{p^k}$-minimal
if $k \equiv 0 \pmod{6}$.

\item[{\rm (2)}] 
Assume $p \equiv 3 \pmod 4$.
The curve $\overline{C}_R$ is $\mathbb{F}_{p^k}$-maximal
if and only if $k \equiv 0 \pmod{6}$ and $k/6$ is odd.
\end{itemize}
\end{theorem}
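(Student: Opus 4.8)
The plan is to compute the action of $\mathrm{Fr}_{p^3}^\ast$, and hence of $\mathrm{Fr}_{p^6}^\ast$, on $H^1(\overline{C}_R)$ using the decomposition of Lemma \ref{imp} with $\zeta = 1$, and then to read off (1) and (2) from Lemmas \ref{maximal minimal}, \ref{maximal minimal extension} and Proposition \ref{pp}.

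First I would set up the cohomology. With $\zeta = 1$ the element $\xi$ lies in $\mathbb{F}_{p^3}$ (as recorded just before the statement), so Lemma \ref{imp} applies with $d = 3$: there is an isomorphism
\[
 H^1(\overline{C}_R) \simeq \bigoplus_{\psi \in \mathbb{F}_p^{\vee} \setminus \{1\}} \bigoplus_{a \in \mathbb{F}_p} H_{\rm c}^1(\mathbb{A}^1, \mathscr{L}_{\psi}(-\xi^{p+1}u^2 + au)),
\]
each summand one-dimensional, on which $\mathrm{Fr}_{p^3}^\ast$ acts by $\psi_{p^3}(4^{-1}\xi^{-(p+1)}a^2)\left(\frac{-1}{p^3}\right)G(\psi_{p^3})$. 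The key point — and the one genuinely clever step — is that this scalar is independent of $a$. Indeed, $4^{-1}a^2 \in \mathbb{F}_p$, so $\psi_{p^3}(4^{-1}\xi^{-(p+1)}a^2) = \psi(4^{-1}a^2 \Tr_{p^3/p}(\xi^{-(p+1)}))$, and multiplying $\xi^{p^2}+\xi^p+\xi = 0$ by $\xi^{-(p^2+p+1)}$ and using $\xi^{p^3} = \xi$ gives $\Tr_{p^3/p}(\xi^{-(p+1)}) = \xi^{-(p+1)} + \xi^{-(p^2+p)} + \xi^{-(p^2+1)} = 0$. Therefore $\mathrm{Fr}_{p^3}^\ast$ acts on each summand by $\left(\frac{-1}{p^3}\right)G(\psi_{p^3}) = \left(\frac{-1}{p}\right)G(\psi_{p^3})$.

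Next I would identify the eigenvalues. Replacing $\psi$ by $\psi(\lambda\,\cdot)$ multiplies $G(\psi_{p^3})$ by $\left(\frac{\lambda}{p^3}\right) = \left(\frac{\lambda}{p}\right)$ (as in the proof of Lemma \ref{cpql}), so over the $p-1$ non-trivial characters the quantity $G(\psi_{p^3})$ takes two opposite values, each for half of them. Hence $\mathrm{Fr}_{p^3}^\ast$ acts on $H^1(\overline{C}_R)$ with exactly two eigenvalues $v$ and $-v$, each of multiplicity $g = p(p-1)/2$, where $v \coloneqq \left(\frac{-1}{p}\right)G(\psi_{p^3})$; so $\mathrm{Fr}_{p^6}^\ast = (\mathrm{Fr}_{p^3}^\ast)^2$ is the scalar $v^2 = \left(\frac{-1}{p}\right)p^3$ by \eqref{gauss sum square}. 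If $p \equiv 1 \pmod 4$ this equals $(p^6)^{1/2}$, so $\overline{C}_R$ is $\mathbb{F}_{p^6}$-minimal by Lemma \ref{maximal minimal}, hence $\mathbb{F}_{p^{6j}}$-minimal for all $j \geq 1$ by Lemma \ref{maximal minimal extension} (1), giving (1). If $p \equiv 3 \pmod 4$ it equals $-(p^6)^{1/2}$, so $\overline{C}_R$ is $\mathbb{F}_{p^6}$-maximal, and Lemma \ref{maximal minimal extension} (2) gives $\mathbb{F}_{p^{6(2j+1)}}$-maximality for all $j \geq 0$, which is the ``if'' half of (2).

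For the ``only if'' half of (2), I would argue as follows. Let $\alpha_1,\dots,\alpha_{2g}$ be the eigenvalues of $\mathrm{Fr}_p^\ast$ and put $\zeta_i \coloneqq \alpha_i/p^{1/2}$; since $\alpha_i^3 \in \{v,-v\}$ we get $\alpha_i^{12} = v^4 = p^6$, so each $\zeta_i$ is a $12$-th root of unity with $\zeta_i^6 = v^2/p^3 = -1$, hence of order $4$ or $12$, and $\mathbb{F}_{p^k}$-maximality forces $\zeta_i^k = -1$ for every $i$ by Lemma \ref{maximal minimal}. The step needing an extra input is to exclude the possibility that all $\zeta_i$ have order $4$: were that so, then $\alpha_i^2 = p\zeta_i^2 = -p$ for all $i$, so $\mathrm{Fr}_{p^2}^\ast$ would act as $-p = -(p^2)^{1/2}$ and $\overline{C}_R$ would be $\mathbb{F}_{p^2}$-maximal, contradicting Proposition \ref{pp} (with $f = e = 1$), since the coefficient of $x$ in $R(x) = 2x^p + x$ equals $1 \neq 0$. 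So some $\zeta_i$ has order exactly $12$, and for such an $i$ the relation $\zeta_i^k = -1 = \zeta_i^6$ forces $k \equiv 6 \pmod{12}$, i.e.\ $6 \mid k$ and $k/6$ odd; this completes (2).
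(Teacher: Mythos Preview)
Your proof is correct and follows essentially the same strategy as the paper's: use the decomposition of Lemma~\ref{imp} with $\zeta=1$, exploit the vanishing of $\Tr_{p^3/p}(\xi^{-(p+1)})$ (via $\xi^{p^2}+\xi^p+\xi=0$) to eliminate the $a$-dependence, and conclude that $\mathrm{Fr}_{p^6}^\ast$ acts as the scalar $\left(\tfrac{-1}{p}\right)p^3$. You work with $d=3$ and then square, whereas the paper applies Lemma~\ref{imp} directly with $d=6$; this is a cosmetic difference.

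The one genuinely different step is your ``only if'' argument in (2). The paper observes that each $\alpha_i/p^{1/2}$ factors as a primitive $4$th root times a cube root, deduces $k\equiv 2\pmod 4$ from that factorization alone, and then invokes Proposition~\ref{split} (which rests on the \c{C}ak\c{c}ak--\"Ozbudak result) to force $V_R\subset\mathbb{F}_{p^k}$ and hence $3\mid k$. You instead rule out the possibility that every $\zeta_i$ has order~$4$ by noting this would make $\overline{C}_R$ $\mathbb{F}_{p^2}$-maximal, which Proposition~\ref{pp} forbids since $a_0=1\neq 0$; the existence of an order-$12$ eigenvalue then gives $k\equiv 6\pmod{12}$ in one stroke. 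Your route is more self-contained (Proposition~\ref{pp} is proved elementarily in the paper by point counting, while Proposition~\ref{split} quotes \cite{CO}), at the cost of the small extra step of excluding ``all order~$4$''.
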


\begin{proof}
For any $a \in \mathbb{F}_p$, 
we have 
\[
\Tr_{{p^3}/p}\left(4^{-1} \xi^{-(p+1)} a^2\right)=
\frac{a^2}{4}\Tr_{{p^3}/p}
(\xi^{-(p+1)})=\frac{a^2}{4} \cdot \frac{\xi^{p^2}+\xi^p+\xi}{\xi^{p^2+p+1}}=0
\]
by $\xi^{p^2}+\xi^p+\xi=0$. 

Let $\psi \in \mathbb{F}_p^{\vee}\setminus \{1\}$ and $a \in \mathbb{F}_p$.
By Lemma \ref{imp} (3) for $\zeta=1$ and $d = 6$,
the Frobenius element
$\mathrm{Fr}_{p^6}^\ast$ acts on
$H_{\rm c}^1(\mathbb{A}^1,\mathscr{L}_{\psi}
(-\xi^{p+1} u^2+au))$
as the scalar multiplication by 
\begin{align*}
& \psi_{p^6}(4^{-1} \xi^{-(p+1)} a^2) \left(\frac{-1}{p^6}\right) G(\psi_{p^6}) \\
&= 
\psi \big( 2 \Tr_{{p^3}/p}(4^{-1} \xi^{-(p+1)} a^2) \big) \cdot (-1)^{(p^6-1)/2} \cdot \left(\frac{-1}{p}\right)^3 p^3 \\
&= 
\left(\frac{-1}{p}\right) p^3.
\end{align*}
For a positive integer $k$ with $k \equiv{0} \pmod{6}$,
the Frobenius element
$\mathrm{Fr}_{p^k}^\ast$
acts on $H^1_{\rm c}(C_R) \simeq H^1(\overline{C}_R)$
as the scalar multiplication by
$\displaystyle \left(\frac{-1}{p}\right)^{k/6} p^{k/2}$.
Hence the claims follow. 

To the contrary, we assume that $\overline{C}_R$ is 
$\mathbb{F}_{p^k}$-maximal. By the above argument, 
every eigenvalue of $\mathrm{Fr}_p^\ast$ on $H^1(\overline{C}_R)$
has the form $\zeta_1 \zeta_2 p^{1/2}$ with a primitive 
$4$-th root of unity $\zeta_1$ and a third root of unity $\zeta_2$. 
Hence $k$ is even and $4$ does not divide $k$. By Proposition \ref{split}, we have 
$V_R \subset \mathbb{F}_{p^k}$ and 
$\mathbb{F}_{p^3} \subset \mathbb{F}_{p^k}$ by \eqref{ed}.
Thus $3$ divides $k$. Hence $k$ is a multiple of $6$ and $k/6$ is 
odd. 
\end{proof}

\begin{remark}
When $p_0=3$ or $p_0 \equiv 7 \pmod{12}$, the `if' part in 
Theorem \ref{lc} is shown by \"Ozbudak--Sayg\i \  by a different method; see \cite[Theorem 3.10]{OS}.
\end{remark}

\begin{remark}
Let $R_-(x) \coloneqq -2x^p+x$. 
We have an $\mathbb{F}_{p^2}$-isomorphism 
$C_{R_-} \simeq C_R$ by Example \ref{2R} (1) for $e=1$. 
Hence the same claim as Theorem \ref{lc} 
 holds for $\overline{C}_{R_-}$. 
\end{remark}

\subsection{Non-maximality of the curve $\overline{C}_{R,r}$ for some $r \geq 1$}

In this subsection, we show that for some $r \geq 1$,
the curve $\overline{C}_{R,r}$ is 
not maximal over any finite extension of $\mathbb{F}_{p_0}$.

\begin{theorem}\label{lcc}
Let the notation be as in Theorem \ref{mp}. 
Assume that there exist $\alpha,\beta\in \mathbb{F}_p^{\times}$ and $\xi,\zeta \in \mathbb{F}_{p^d}^{\times}$ 
such that $d \equiv 0 \pmod 4$ and 
$\beta^{d/2}=-1$. Let $r \geq 1$ be a multiple of $d$.  
The curve $\overline{C}_{R,r}$ is 
not $\mathbb{F}_{p_0^k}$-maximal for any integer $k \geq 1$. 
\end{theorem}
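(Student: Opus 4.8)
The plan is to deduce the statement from Lemma \ref{crl} together with Corollary \ref{aic}. Since $R(x)=2x^p+x\in\mathbb{F}_{p_0}[x]$, Corollary \ref{aic} reduces everything to showing that $\overline{C}_{R,r}$ is not $\mathbb{F}_{p^k}$-maximal for any $k\ge 1$; and this last assertion is precisely of the shape produced by Lemma \ref{crl}, so it suffices to exhibit two finite morphisms out of $\overline{C}_{R,r}$ verifying the hypotheses of that lemma.

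For the morphisms I would use \eqref{cRr}. For any $\zeta\in\mathbb{F}_{p^r}^{\times}$ it extends to a finite morphism $\overline{C}_{R,r}\to\overline{C}_{\zeta R}$, and because the coefficients $\zeta^{p^i}$ of the defining map $z\mapsto\sum_{i=0}^{r-1}(\zeta z)^{p^i}$ all lie in $\mathbb{F}_p(\zeta)$, this morphism is already defined over $\mathbb{F}_p(\zeta)$. Taking $\zeta=1$ gives a finite morphism $\overline{C}_{R,r}\to\overline{C}_R$ over $\mathbb{F}_p$, and I would set $C_1\coloneqq\overline{C}_R$ and $n_1\coloneqq 6$. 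For $\zeta$ as in the statement, $d\mid r$ forces $\zeta\in\mathbb{F}_{p^d}^{\times}\subseteq\mathbb{F}_{p^r}^{\times}$, so there is also a finite morphism $\overline{C}_{R,r}\to\overline{C}_{\zeta R}$ over $\mathbb{F}_{p^d}$, and I would set $C_2\coloneqq\overline{C}_{\zeta R}$ and $n_2\coloneqq p_0 d$. After base change the first morphism is defined over $\mathbb{F}_{p^{n_1}}$ and the second over $\mathbb{F}_{p^{n_2}}$; moreover $n_1=6\not\equiv 0\pmod 4$ and, since $4\mid d$, $n_2=p_0 d\equiv 0\pmod 4$, which is condition (3) of Lemma \ref{crl}.

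Next I would check the remaining two conditions. As $p_0\neq 2$, $p$ is odd, hence $p\equiv 1$ or $3\pmod 4$; applying Theorem \ref{lc} with $k=6$ (noting that $6/6=1$ is odd in the second case) shows that $\overline{C}_R$ is $\mathbb{F}_{p^6}$-minimal or $\mathbb{F}_{p^6}$-maximal, so condition (1) holds. Since by hypothesis $d$ is even (indeed $4\mid d$) and $\beta^{d/2}=-1$, Theorem \ref{mp}(2) shows $\overline{C}_{\zeta R}$ is $\mathbb{F}_{p^{p_0 d}}$-maximal, which is condition (2). Lemma \ref{crl} then gives that $\overline{C}_{R,r}$ is not $\mathbb{F}_{p^k}$-maximal for any $k\ge 1$, and Corollary \ref{aic} upgrades this to the claim over $\mathbb{F}_{p_0}$.

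I do not expect any serious obstacle: the argument is a direct composition of results already in place. The one point that needs care is the bookkeeping of fields of definition --- specifically, that the morphism \eqref{cRr} for our chosen $\zeta$ is defined already over $\mathbb{F}_{p^d}$ (and over $\mathbb{F}_p$ when $\zeta=1$), which is what lets us take $n_2=p_0 d$, a multiple of $4$, instead of a larger exponent that might no longer be divisible by $4$; this is immediate once one observes that all the $\zeta^{p^i}$ lie in $\mathbb{F}_p(\zeta)$.
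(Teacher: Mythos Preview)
Your proposal is correct and follows essentially the same route as the paper: reduce via Corollary \ref{aic}, then apply Lemma \ref{crl} with $C_1=\overline{C}_R$, $n_1=6$ (using Theorem \ref{lc}) and $C_2=\overline{C}_{\zeta R}$, $n_2=p_0 d$ (using Theorem \ref{mp}(2)), with the morphisms coming from \eqref{cRr}. The only cosmetic difference is that the paper first observes that $d\mid p-1$ (since $d=\mathrm{lcm}(d_1,d_2)$ and each $d_i$ divides $|\mathbb{F}_p^{\times}|$), so $4\mid d$ forces $p\equiv 1\pmod 4$ and only Theorem \ref{lc}(1) is needed; you instead cover both residues of $p\pmod 4$, which is harmless since Lemma \ref{crl}(1) allows either maximality or minimality of $C_1$.
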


\begin{proof}
By Corollary \ref{aic}, 
it suffices to show that $\overline{C}_{R,r}$ is 
not $\mathbb{F}_{p^k}$-maximal for any integer $k \geq 1$. 

Take $\alpha,\beta\in \mathbb{F}_p^{\times}$ and $\xi,\zeta \in \mathbb{F}_{p^d}^{\times}$
satisfying the assumption.
Recall that $d_1$ and $d_2$ are the orders of 
$\alpha \in \mathbb{F}_p^{\times}$ and 
$\beta \in \mathbb{F}_p^{\times}$, respectively. 
Hence $p-1$ is divisible by both $d_1$ and $d_2$. 
Since $d=\mathrm{lcm}(d_1,d_2)$, the integer 
$d$ divides $p-1$.
Thus  
the condition $d \equiv 0 \pmod{4}$ 
implies $p \equiv 1 \pmod 4$. 

We will apply Lemma \ref{crl}. 
By \eqref{cRr} for $\zeta$ in the assumption, we have a 
finite morphism $\overline{C}_{R,r} \to \overline{C}_{\zeta R}$
defined over $\mathbb{F}_{p^d}$. 
By Theorem \ref{mp} (2),  the curve $\overline{C}_{\zeta R}$ is $\mathbb{F}_{p^{p_0d}}$-maximal.   We note that 
$p_0 d \equiv 0 \pmod 4$ since $d \equiv 0 \pmod 4$. 

By \eqref{cRr} for $\zeta=1$, we have a 
finite morphism $\overline{C}_{R,r} \to \overline{C}_R$
defined over $\mathbb{F}_p$. 
From $p \equiv 1 \pmod 4$ and Theorem \ref{lc} (1), it follows that 
$\overline{C}_R$ is 
$\mathbb{F}_{p^6}$-minimal. 

By applying Lemma \ref{crl} with 
$C=\overline{C}_{R,r}$,  
$C_1=\overline{C}_R$, $n_1=6$, 
$C_2=\overline{C}_{\zeta R}$ and $n_2=p_0 d$, we obtain the claim. 
\end{proof}

We give examples which fall into the situation 
in Theorem \ref{lcc}. 

\begin{corollary}
\label{lcc2}
\begin{itemize}
\item[{\rm (1)}] 
Assume $p_0\equiv 5\pmod{8}$.
Let $d$ be the order of $-2$ in $\mathbb{F}_{p_0}^{\times}$. Let $r \geq 1$
be a multiple of $d$. 
The curve $\overline{C}_{R,r}$ is 
not $\mathbb{F}_{p_0^k}$-maximal for any integer $k \geq 1$. 
\item[{\rm (2)}] 
Let $p_0=2^{2^m}+1\ (m \geq 1)$
be a Fermat prime. Let $r \geq 1$ be a multiple of 
$2^{m+1}$. 
The curve $\overline{C}_{R,r}$ is 
not $\mathbb{F}_{p_0^k}$-maximal for any integer $k \geq 1$. 
\end{itemize}
\end{corollary}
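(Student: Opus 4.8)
The plan is to deduce both statements from Theorem~\ref{lcc} by exhibiting, in each case, elements $\alpha,\beta,\xi,\zeta$ satisfying its hypotheses (stated in the notation of Theorem~\ref{mp}) with the appropriate value of $d$. In both parts I would take $\alpha=1\in\mathbb{F}_{p_0}^{\times}\setminus\{-1\}$, so that $\beta=-(1\cdot 2)^{-1}=(-2)^{-1}$, together with $\xi=1$ and any $\zeta\in\mathbb{F}^{\times}$ with $\zeta^{p_0-1}=\beta$. Then $d_1=\mathrm{ord}(\alpha)=1$ and $d_2=\mathrm{ord}(\beta)$, which equals the order of $-2$ in $\mathbb{F}_{p_0}^{\times}$ since $-2$ and $(-2)^{-1}$ have the same order; hence $d=\mathrm{lcm}(d_1,d_2)$ is exactly this common order. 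By Lemma~\ref{lld}~(2) we have $\xi,\zeta\in\mathbb{F}_{p_0^d}^{\times}$, and because $d$ is the \emph{exact} order of $\beta$, we get $\beta^{d/2}=-1$ whenever $d$ is even. Thus, once we verify $d\equiv 0\pmod 4$ and that $r$ is a multiple of $d$, Theorem~\ref{lcc} applies and yields the non-maximality of $\overline{C}_{R,r}$ over every $\mathbb{F}_{p_0^k}$.

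For part~(1), $d$ is by definition the order of $-2$ in $\mathbb{F}_{p_0}^{\times}$, so the divisibility $d\mid r$ is precisely the hypothesis, and it remains to show $4\mid d$. Since $p_0\equiv 5\pmod 8$ we have $\left(\frac{-1}{p_0}\right)=1$ and $\left(\frac{2}{p_0}\right)=-1$, hence $\left(\frac{-2}{p_0}\right)=-1$; so $-2$ is a non-square, i.e.\ $d\nmid (p_0-1)/2$, while $d\mid p_0-1$. This forces $v_2(d)=v_2(p_0-1)$, and since $p_0-1\equiv 4\pmod 8$ we get $v_2(d)=2$, in particular $4\mid d$. Theorem~\ref{lcc} then concludes.

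For part~(2), with $p_0=2^{2^m}+1$ ($m\ge 1$) I would compute the order $d$ of $-2$ in $\mathbb{F}_{p_0}^{\times}$ directly: one has $(-2)^{2^{m+1}}=2^{2^{m+1}}=(2^{2^m})^2\equiv(p_0-1)^2\equiv 1\pmod{p_0}$, and, since $2^m$ is even, $(-2)^{2^m}=2^{2^m}=p_0-1\equiv -1\pmod{p_0}$. Hence $d\mid 2^{m+1}$ but $d\nmid 2^m$, forcing $d=2^{m+1}$; for $m\ge 1$ this gives $4\mid d=2^{m+1}$. The hypothesis that $r$ is a multiple of $2^{m+1}$ is exactly $d\mid r$, so Theorem~\ref{lcc} applies and finishes the proof.

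The argument is essentially formal once Theorem~\ref{lcc} is available; the only genuine computation is the determination of the $2$-adic valuation of the order of $-2$ modulo $p_0$ in each case, and the point to be careful about is checking that one really obtains $4\mid d$ rather than merely $2\mid d$ (which is where the hypotheses $p_0\equiv 5\pmod 8$, respectively $m\ge 1$, are used).
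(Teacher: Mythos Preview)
Your proof is correct and follows essentially the same approach as the paper: reduce to Theorem~\ref{lcc} with $\alpha=1$, $\beta=(-2)^{-1}$, and verify $4\mid d$ in each case. Two minor notational slips: in the setup of Theorem~\ref{mp} (hence Theorem~\ref{lcc}) one needs $\zeta^{p-1}=\beta$ and $\xi,\zeta\in\mathbb{F}_{p^d}^{\times}$, not $\zeta^{p_0-1}=\beta$ and $\mathbb{F}_{p_0^d}^{\times}$; since $-2\in\mathbb{F}_{p_0}^{\times}\subset\mathbb{F}_p^{\times}$ has the same order in both groups, this does not affect the argument. Your $2$-adic valuation computation in part~(1) is a slightly cleaner packaging of the paper's step-by-step reasoning, and in part~(2) you supply the explicit computation of $d=2^{m+1}$ that the paper leaves implicit.
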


\begin{proof}
We consider the situation 
in  Corollary \ref{-2}.
Namely, we have $\alpha=1$, $\beta = - 2^{-1}$, $\xi=1$,
$\zeta \in \mathbb{F}^{\times}$ with $\zeta^{p-1} = - 2^{-1}$,
and $d$ is the order of $-2$ in $\mathbb{F}_{p_0}^{\times}$.

(1) The assumption $p_0 \equiv 5 \pmod{8}$ implies  
$\displaystyle \left(\frac{-2}{p_0}\right)=-1$. 
If $d$ is odd, the integer 
$d$ divides $(p_0-1)/2$ 
by $p_0-1 \equiv 0 \pmod{d}$. 
By $(-2)^d=1 \in \mathbb{F}_{p_0}^{\times}$, 
we have $(-2)^{(p_0-1)/2}=
((-2)^d)^{(p_0-1)/(2d)}=1$, 
which contradicts $\displaystyle \left(\frac{-2}{p_0}\right)=-1$. 
Thus $d$ is even. 
We write $d = 2d'$. 
Since $d$ is the order of $-2$, 
we have $(-2)^{d'}=-1$. 
Since $d'$ divides $(p_0-1)/2$ and 
$(-2)^{(p_0-1)/2}=-1$, the integer  
$(p_0-1)/(2d')$ is odd. Thus 
$d'$ is even by $p_0 \equiv 1 \pmod 4$. 
Hence $4$ divides $2d' = d$. 
Since $\alpha=1$, $\beta = - 2^{-1}$, $\xi=1$, and $\zeta$
satisfy the assumption of Theorem \ref{lcc},
the claim follows. 

(2) From $p_0=2^{2^m}+1$, it follows that 
$d=2^{m+1}$. 
Since $d=2^{m+1} \equiv 0 \pmod{4}$ by $m \geq 1$, 
the claim follows from Theorem \ref{lcc}. 
\end{proof}

\subsection{Some examples of maximal curves in characteristic $3$}

Here we give examples of maximal curves
in characteristic $3$.
It is possible to obtain similar examples in any odd characteristics.

We put $p_0 = p =3$. 
Let $\zeta \in \mathbb{F}$ be an element 
such that $\zeta^4+\zeta^2-1=0$. 
Then $\zeta^8=(1-\zeta^2)^2=-1$. 
Thus $\zeta \in \mathbb{F}_{3^4}^{\times}$. 
Let $\xi=\zeta^2$. 

\begin{lemma}
$\zeta^3 \xi^9 + \zeta^3 \xi^3 + \zeta \xi = 0$.
\end{lemma}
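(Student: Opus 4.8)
The plan is to reduce the identity to a short computation inside $\mathbb{F}_{3^4}$ using only the defining relation $\zeta^4+\zeta^2-1=0$ and its already-recorded consequence $\zeta^8=-1$. Since $p_0=p=3$, raising to the third power is the Frobenius, so from $\xi=\zeta^2$ one has $\xi^3=\zeta^6$ and $\xi^9=\zeta^{18}$. Hence
\[
\zeta^3\xi^9+\zeta^3\xi^3+\zeta\xi=\zeta^{21}+\zeta^9+\zeta^3=\zeta^3\bigl(\zeta^{18}+\zeta^6+1\bigr),
\]
and because $\zeta\in\mathbb{F}_{3^4}^{\times}$ is nonzero, it suffices to prove $\zeta^{18}+\zeta^6+1=0$.

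For the second step I would lower the exponents $18$ and $6$ to degree at most $2$. From $\zeta^8=-1$ we get $\zeta^{16}=1$, hence $\zeta^{18}=\zeta^2$. From $\zeta^4+\zeta^2-1=0$ we have $\zeta^4=1-\zeta^2$, so $\zeta^6=\zeta^2\zeta^4=\zeta^2(1-\zeta^2)=\zeta^2-\zeta^4=\zeta^2-(1-\zeta^2)=-1-\zeta^2$ in characteristic $3$. Substituting these two expressions gives $\zeta^{18}+\zeta^6+1=\zeta^2+(-1-\zeta^2)+1=0$, which completes the proof.

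This argument has no real obstacle; the only points requiring a little care are that we work in characteristic $3$ (so $2=-1$ and $3\zeta^2=0$), that $\zeta\neq0$ so dividing out $\zeta^3$ is legitimate, and that the needed facts about $\zeta$ — in particular $\zeta\in\mathbb{F}_{3^4}^{\times}$ and $\zeta^8=-1$ — have already been established immediately above the statement. I would present the whole thing as a single displayed chain of equalities.
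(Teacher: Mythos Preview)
Your proof is correct and essentially the same as the paper's: both factor out $\zeta^3$ and reduce the claim to $\zeta^6+\zeta^2+1=0$, which is then verified from the defining relation $\zeta^4+\zeta^2-1=0$. The only cosmetic differences are that the paper reaches $\zeta^6+\zeta^2+1$ via the Frobenius cube $(\xi^3+\xi+1)^3=\xi^9+\xi^3+1$ and finishes with the factorization $\zeta^6+\zeta^2+1=(\zeta^2-1)(\zeta^4+\zeta^2-1)$, whereas you use $\zeta^{16}=1$ to reduce $\zeta^{18}$ and then substitute $\zeta^4=1-\zeta^2$ directly.
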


\begin{proof}
We have 
$\zeta^3 \xi^9 + \zeta^3 \xi^3 + \zeta \xi
=\zeta^3(\xi^9+\xi^3+\zeta^{-2}\xi)$ and 
\[
\xi^9+\xi^3+\zeta^{-2} \xi
=\xi^9+\xi^3+1=(\xi^3+\xi+1)^3.
\]
Thus it suffices to check 
$\xi^3+\xi+1=0$. 
We have 
\[
\xi^3+\xi+1=
\zeta^6+\zeta^2+1=
(\zeta^2-1)(\zeta^4+\zeta^2-1)=0.
\]
Hence the claim follows.
\end{proof}

\begin{corollary}\label{3c}
Let $k \geq 1$ be a positive integer. 
The curve $\overline{C}_{\zeta R}$ is 
$\mathbb{F}_{3^{4k}}$-maximal if and only if $k$ is odd. 
\end{corollary}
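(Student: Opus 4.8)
The plan is to read off the action of Frobenius on $H^1(\overline{C}_{\zeta R})$ directly from Lemma~\ref{imp}, specialized to $p_0 = p = 3$, $q = 3^4$ and $d = 4$. First I would record that the preceding lemma gives $\zeta^3\xi^9 + \zeta^3\xi^3 + \zeta\xi = 0$, i.e.\ (by \eqref{ed}) that $\xi = \zeta^2$ lies in $V_{\zeta R}\setminus\{0\}$, and that $\xi,\zeta\in\mathbb{F}_{3^4}^\times$ by construction: the relation $\zeta^8 = -1$ forces $\zeta$ to have multiplicative order $16$, which divides $|\mathbb{F}_{3^4}^\times| = 80$. Hence $\xi,\zeta$ satisfy the standing hypotheses of Subsection~\ref{subsection criterion} with $d = 4$, so Lemma~\ref{imp}~(3) tells us that $\mathrm{Fr}_{3^4}^\ast$ acts on each one-dimensional space $H_{\rm c}^1(\mathbb{A}^1,\mathscr{L}_\psi(-\zeta\xi^{p+1}u^2 + au))$ by multiplication by $\psi_{3^4}(4^{-1}\zeta^{-1}\xi^{-(p+1)}a^2)\bigl(\tfrac{-\zeta}{3^4}\bigr)G(\psi_{3^4})$.

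Next I would evaluate that scalar. In characteristic $3$ one has $4^{-1} = 1$ and $\xi^{p+1} = \xi^4 = \zeta^8 = -1$, so the argument of $\psi_{3^4}$ is $-\zeta^{-1}a^2$; since $a\in\mathbb{F}_3$, this factor equals $\psi\bigl(-a^2\,\Tr_{3^4/3}(\zeta^{-1})\bigr)$, and a one-line computation from $\zeta^{16} = 1$, $\zeta^8 = -1$ gives $\Tr_{3^4/3}(\zeta^{-1}) = 0$, so this factor is $1$ for every $a$ and $\psi$. Because $3^4\equiv 1\pmod 4$ we have $\bigl(\tfrac{-\zeta}{3^4}\bigr) = \zeta^{(3^4-1)/2} = \zeta^{40} = \zeta^8 = -1$, while Lemma~\ref{HG} with $f_0 = 4$ gives $G(\psi_{3^4}) = 3^2$. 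Thus $\mathrm{Fr}_{3^4}^\ast$ acts by the single scalar $-3^2$ on each such space, and by Lemma~\ref{imp}~(1)--(2) it acts by $-3^2$ on the whole of $H^1(\overline{C}_{\zeta R})$.

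Finally I would pass to $\mathbb{F}_{3^{4k}}$: since $\mathrm{Fr}_{3^{4k}}^\ast = (\mathrm{Fr}_{3^4}^\ast)^k$, it acts on $H^1(\overline{C}_{\zeta R})$ by $(-3^2)^k = (-1)^k 3^{2k}$, and because $(3^{4k})^{1/2} = 3^{2k}$, Lemma~\ref{maximal minimal}~(2) shows that $\overline{C}_{\zeta R}$ is $\mathbb{F}_{3^{4k}}$-maximal exactly when $(-1)^k = -1$, i.e.\ when $k$ is odd (and $\mathbb{F}_{3^{4k}}$-minimal exactly when $k$ is even). Since the full cohomology is a single Frobenius eigenspace, the ``only if'' direction comes for free once the eigenvalue $-3^2$ is identified. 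I do not foresee any genuine obstacle; the only step needing a little care is the trace identity $\Tr_{3^4/3}(\zeta^{-1}) = 0$, which follows at once from the defining relation $\zeta^4 + \zeta^2 = 1$.
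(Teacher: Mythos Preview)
Your proof is correct and follows essentially the same route as the paper: specialize Lemma~\ref{imp}~(3) to $p=3$, $d=4$, verify that the character value $\psi_{3^4}(\,\cdot\,)$ is trivial via the trace computation, evaluate the quadratic character and Gauss sum, and conclude that $\mathrm{Fr}_{3^4}^\ast$ acts by the scalar $-3^2$. The paper then invokes Lemma~\ref{maximal minimal extension}~(2) after establishing $\mathbb{F}_{3^4}$-maximality, whereas you raise the eigenvalue to the $k$-th power and apply Lemma~\ref{maximal minimal}~(2) directly; this is a cosmetic difference only.
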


\begin{proof}
We apply Lemma \ref{imp} (3) with $p=3$ and $d=4$. 
By $\zeta^8=-1$, we have 
$\displaystyle \left(\frac{-\zeta}{3^4}\right)=(-\zeta)^{40} = -1$.
Lemma \ref{HG} implies $G(\psi_{3^4})=3^2$. 
By $\xi^4=\zeta^8=-1$, we calculate
\[
\Tr_{{3^4}/3}\left(4^{-1} \zeta^{-1} \xi^{-4} a^2\right)=
-\frac{a^2}{4}\Tr_{{3^4}/3}\left(\zeta^{-1}\right)
=-\frac{a^2}{4}\cdot \frac{\zeta^{26}+\zeta^{24}+\zeta^{18}+1}{\zeta^{27}}=0
\]
for every $a \in \mathbb{F}_3$. 
Thus $\mathrm{Fr}_{3^4}^\ast$ acts on $H^1(\overline{C}_{\zeta R})$ as scalar 
multiplication by $-3^2$ by Lemma \ref{imp}.
The curve $\overline{C}_{\zeta R}$ is  $\mathbb{F}_{3^{4}}$-maximal by Lemma \ref{maximal minimal} (2).
The claim follows from Lemma \ref{maximal minimal extension} (2). 
\end{proof}

\begin{corollary}
Assume $r \equiv 0 \pmod{4}$.
The curve $\overline{C}_{R,r}$ is 
not $\mathbb{F}_{3^k}$-maximal for any integer $k \geq 1$. 
\end{corollary}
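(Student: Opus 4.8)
The plan is to follow the strategy of the proof of Theorem~\ref{lcc}, replacing the appeal to Theorem~\ref{mp}~(2) by Corollary~\ref{3c}. Since here $p_0=p=3$, we have $\mathbb{F}_{3^k}=\mathbb{F}_{p_0^k}$, so there is no need to pass through Corollary~\ref{aic}: it suffices to produce the two auxiliary finite morphisms and the two maximality statements required by Lemma~\ref{crl} and to check the numerical hypotheses. There is no genuine obstacle here; the content has already been isolated in Lemma~\ref{crl}, Theorem~\ref{lc} and Corollary~\ref{3c}.

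First I would fix $\zeta\in\mathbb{F}_{3^4}^{\times}$ with $\zeta^4+\zeta^2-1=0$ as in Corollary~\ref{3c}. Because $4\mid r$ we have $\mathbb{F}_{3^4}\subset\mathbb{F}_{3^r}$, so $\zeta\in\mathbb{F}_{3^r}^{\times}$ and $1\in\mathbb{F}_{3^r}^{\times}$, which makes the morphisms~\eqref{cRr} available. Taking the parameter $\zeta$ in~\eqref{cRr} gives a finite morphism $\overline{C}_{R,r}\to\overline{C}_{\zeta R}$; since every coefficient $\zeta^{3^i}$ of the map lies in $\mathbb{F}_{3^4}$ and $\zeta^{3^r}=\zeta$, both this morphism and $\overline{C}_{\zeta R}$ are defined over $\mathbb{F}_{3^4}$. (It is essential to use $\mathbb{F}_{3^4}$ rather than merely $\mathbb{F}_{3^r}$, since $\overline{C}_{\zeta R}$ need not be $\mathbb{F}_{3^r}$-maximal when $r/4$ is even.) Taking the parameter $1$ in~\eqref{cRr} gives a finite morphism $\overline{C}_{R,r}\to\overline{C}_R$ defined over $\mathbb{F}_3$.

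Next I would record the maximality inputs. By Corollary~\ref{3c} with $k=1$, the curve $\overline{C}_{\zeta R}$ is $\mathbb{F}_{3^4}$-maximal. Since $p=3\equiv3\pmod4$, Theorem~\ref{lc}~(2) applies with $k=6$ (for which $6\equiv0\pmod6$ and $6/6=1$ is odd), so $\overline{C}_R$ is $\mathbb{F}_{3^6}$-maximal. Finally I would apply Lemma~\ref{crl} with $C=\overline{C}_{R,r}$, $C_1=\overline{C}_R$, $n_1=6$, $C_2=\overline{C}_{\zeta R}$, $n_2=4$: hypothesis~(1) holds because $\overline{C}_R$ is $\mathbb{F}_{3^6}$-maximal, hypothesis~(2) holds because $\overline{C}_{\zeta R}$ is $\mathbb{F}_{3^4}$-maximal, and hypothesis~(3) holds because $6\not\equiv0\pmod4$ while $4\equiv0\pmod4$. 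The conclusion of Lemma~\ref{crl} is exactly that $\overline{C}_{R,r}$ is not $\mathbb{F}_{3^k}$-maximal for any integer $k\geq1$. The only points requiring a line of care are verifying that the morphism $\overline{C}_{R,r}\to\overline{C}_{\zeta R}$ is genuinely defined over $\mathbb{F}_{3^4}$ and that the numerical hypotheses line up with the cited statements.
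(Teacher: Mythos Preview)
Your proposal is correct and matches the paper's proof essentially line for line: both produce the finite morphisms $\overline{C}_{R,r}\to\overline{C}_R$ over $\mathbb{F}_3$ and $\overline{C}_{R,r}\to\overline{C}_{\zeta R}$ over $\mathbb{F}_{3^4}$ via \eqref{cRr}, invoke Theorem~\ref{lc}~(2) and Corollary~\ref{3c} for the maximality inputs, and conclude with Lemma~\ref{crl} using $n_1=6$, $n_2=4$. Your extra remark justifying that the morphism to $\overline{C}_{\zeta R}$ descends to $\mathbb{F}_{3^4}$ (rather than only $\mathbb{F}_{3^r}$) is a welcome clarification that the paper leaves implicit.
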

\begin{proof}
We apply Lemma \ref{crl}.
Let $\zeta$ be as above. 
By \eqref{cRr}, we have a finite morphism 
$\overline{C}_{R,r} \to \overline{C}_{\zeta R}$
over $\mathbb{F}_{3^4}$. 
The curve $\overline{C}_{\zeta R}$ is 
$\mathbb{F}_{3^4}$-maximal by Corollary \ref{3c}. 

On the other hand, by \eqref{cRr}, we have a finite morphism 
$\overline{C}_{R,r} \to \overline{C}_{R}$ over $\mathbb{F}_3$.
The curve $\overline{C}_R$ is $\mathbb{F}_{3^6}$-maximal 
by Proposition \ref{lc} (2) for $p=3$. 

By applying Lemma \ref{crl} with 
$C=\overline{C}_{R,r}$,  
$C_1=\overline{C}_R$, $n_1=6$, 
$C_2=\overline{C}_{\zeta R}$ and $n_2=4$, we obtain 
the claim. 
\end{proof}

\subsection*{Acknowledgement} 

The first author is supported by JSPS KAKENHI Grant Numbers 23K20786, 23K20204, 21K18577, and 24K21512.
The third author is supported by JSPS KAKENHI Grant Numbers 20K03529 and 23K20786.

\end{document}